\documentclass[reqno]{amsart}
\usepackage[english]{babel}
\usepackage{amssymb,amsmath,hyperref}
\usepackage{amsrefs}
\usepackage[foot]{amsaddr}
\usepackage{mathrsfs}
\usepackage{cleveref, enumerate}

\usepackage{transfer}

\newtheorem{thm}{Theorem}

\newtheorem{cor}[thm]{Corollary}
\newtheorem{defi}[thm]{Definition}
\newtheorem{rem}[thm]{Remark}
\newtheorem{nota}[thm]{Notation}

\newtheorem{princ}[thm]{Principle}

\newtheorem{ack}[thm]{Acknowledgement}

\newtheorem*{tempo*}{Template}

\newcommand\be{\begin{equation}}
\newcommand\ee{\end{equation}}

\newbox\gnBoxA
\newdimen\gnCornerHgt
\setbox\gnBoxA=\hbox{$\ulcorner$}
\global\gnCornerHgt=\ht\gnBoxA
\newdimen\gnArgHgt

\def\Godelnum #1{%
	\setbox\gnBoxA=\hbox{$#1$}%
	\gnArgHgt=\ht\gnBoxA%
	\ifnum \gnArgHgt<\gnCornerHgt
		\gnArgHgt=0pt%
	\else
		\advance \gnArgHgt by -\gnCornerHgt%
	\fi
	\raise\gnArgHgt\hbox{$\ulcorner$} \box\gnBoxA %
		\raise\gnArgHgt\hbox{$\urcorner$}}
\def\bdefi{\begin{defi}\rm}
\def\edefi{\end{defi}}
\def\bnota{\begin{nota}\rm}
\def\enota{\end{nota}}
\def\brem{\begin{rem}\rm}
\def\erem{\end{rem}}

\def\DNR{\textup{DNR}}

\def\RCA{\textup{\textsf{RCA}}}

\def\RCAo{\textup{\textsf{RCA}}_{0}^{\omega}}
\def\RCAO{\textup{\textsf{RCA}}_{0}^{\Lambda}}

\def\WKL{\textup{\textsf{WKL}}}

\def\UWKL{\textup{\textsf{UWKL}}}

\def\T{\mathcal{T}}

\def\bye{\end{document}}

\def\P{\textup{\textsf{P}}}

\def\N{{\mathbb  N}}

\def\({\textup{(}}
\def\){\textup{)}}

\def\st{\textup{st}}
\def\asa{\leftrightarrow}

\def\di{\rightarrow}

\def\eps{\varepsilon}

\def\ACA{\textup{\textsf{ACA}}}
\def\paai{\Pi_{1}^{0}\textup{-\textsf{TRANS}}}

\def\QFAC{\textup{\textsf{QF-AC}}}

\def\ADS{\textup{ADS}}
\def\RT{\textup{\textsf{RT}}}
\def\UADS{\textup{\textsf{UADS}}}

\def\RS{{\mathfrak{RS}}}

\def\MU{\textup{\textsf{MU}}}

\def\HAC{\textup{\textsf{HAC}}}

\def\INT{\textup{\textsf{int}}}

\def\field{\textup{field}}

\def\DNR{\textup{\textsf{DNR}}}
\def\ADS{\textup{\textsf{ADS}}}
\def\CAC{\textup{\textsf{CAC}}}
\def\UCAC{\textup{\textsf{UCAC}}}
\def\UDNR{\textup{\textsf{UDNR}}}
\def\UCOH{\textup{\textsf{UCOH}}}
\def\COH{\textup{\textsf{COH}}}

\setcounter{tocdepth}{3}
\numberwithin{equation}{section}
\numberwithin{thm}{section}

\usepackage{comment}

\begin{document}
\title[Taming the Reverse Mathematics zoo]{The Taming of the Reverse Mathematics zoo}

\begin{abstract}
Reverse Mathematics is a program in the foundations of mathematics.  Its results give rise to an elegant classification of theorems of ordinary mathematics based on computability.  
In particular, the majority of these theorems fall into \emph{only five} categories of which the associated logical systems are dubbed `the Big Five'.  
Recently, a lot of effort has been directed towards finding \emph{exceptional} theorems, i.e.\ which fall \emph{outside} the Big Five categories.  
The so-called Reverse Mathematics zoo is a collection of such exceptional theorems (and their relations).    
In this paper, we show that the \emph{uniform} versions of the zoo-theorems, i.e.\ where a functional computes the objects stated to exist, all fall in the third Big Five category \emph{arithmetical comprehension}, inside Kohlenbach's {higher-order} Reverse Mathematics.  
In other words, the zoo seems to disappear at the uniform level.      
Our classification applies to all theorems whose objects exhibit \emph{little structure}, a notion we conjecture to be connected to Montalb\'an's notion \emph{robustness}.  
Surprisingly, our methodology reveals a hitherto unknown `computational' aspect of Nonstandard Analysis:  We shall formulate an \emph{algorithm} $\RS$ which takes as input the proof of a specific equivalence \emph{in Nelson's internal set theory}, and outputs the proof of the desired equivalence (not involving Nonstandard Analysis) between the uniform zoo principle and arithmetical comprehension.  
Moreover, the equivalences thus proved are even \emph{explicit}, i.e.\ a term from the language converts the functional from one uniform principle into the functional from the other one and vice versa.    
\end{abstract}
\author{Sam Sanders}
\address{Department of Mathematics, Ghent University, Belgium \& Munich Center for Mathematical Philosophy, LMU Munich, Germany}
\email{sasander@me.com}
\maketitle
\thispagestyle{empty}

%\tableofcontents

\section{Introduction: Reverse Mathematics and its zoo}\label{intro}
In two words, the subject of this paper is the \emph{Reverse Mathematics} classification in Kohlenbach's framework (\cite{kohlenbach2}) of uniform versions of principles from the \emph{Reverse Mathematics zoo} (\cite{damirzoo}), 
namely as equivalent to \emph{arithmetical comprehension}.   We first discuss the italicised notions in more detail.

\medskip

For an introduction to the foundational program Reverse Mathematics (RM for short), we refer to \cites{simpson2, simpson1}.  
One of the main results of RM is that the majority of theorems from \emph{ordinary mathematics}, i.e.\ about countable and separable objects, fall into \emph{only five} 
categories of which the associated logical systems are dubbed `the Big Five' (See e.g.\ \cite{montahue}*{p.\ 432}).  
In the last decade or so, a huge amount of time and effort was invested in identifying theorems falling \emph{outside} of the Big Five categories.  
All such exceptional theorems (and their relations) falling below the third Big Five system, are collected in the so-called RM zoo (See \cite{damirzoo}).   

\medskip

In this paper, we shall establish that the exceptional principles inhabiting the RM zoo become \emph{non-exceptional at the uniform level}, namely that the uniform versions of RM zoo-principles are all equivalent to arithmetical comprehension, the aforementioned third Big Five system of RM. 
As a first example of such a `uniform version', consider the principle \textsf{UDNR}, to be studied in Section~\ref{CUDNR}.  % to obtain a template for other principles from the zoo.    
\be\label{UDNR2}\tag{\textsf{UDNR}}
(\exists \Psi^{1\di1})\big[(\forall A^{1})(\forall e^{0})(\Psi(A)(e)\ne \Phi_{e}^{A}(e))\big].
\ee
Clearly, $\UDNR$ is the uniform version of the zoo principle\footnote{We sometimes refer to inhabitants of the RM zoo as `theorems' and sometimes as `principles'.} $\DNR$, defined as:  % which was first formulated in \cite{withgusto}.
\be\label{DNR2}\tag{\textup{\DNR}}
(\forall A^{1})(\exists f^{1})(\forall e^{0})\big[f(e)\ne \Phi_{e}^{A}(e)\big].
\ee  
The principle $\DNR$ was first formulated in \cite{withgusto} and is even strictly implied by $\textsf{WWKL}$ (See \cite{compdnr}) where the latter principle sports \emph{some} Reverse Mathematics equivalences (\cites{montahue, yuppie, yussie}) but is not a Big Five system.  Nonetheless, we shall prove that $\UDNR\asa (\exists^{2})$, where the second principle is the functional version of arithmetical comprehension, the third Big Five system of RM.    
%This result is similar to $\UWWKL\asa (\exists^{2})$ from \cite{yamayamaharehare}.  
In other words, \emph{the `exceptional' status of $\DNR$ disappears completely if we consider its uniform version $\UDNR$}.  
The proof of the equivalence $\UDNR\asa (\exists^{2})$ takes place in $\RCAo$ (See Section \ref{base}), the base theory of Kohlenbach's \emph{higher-order Reverse Mathematics}.  

\medskip

More generally, in Sections \ref{CUDNR}, \ref{czoo}, and \ref{DX}, we show that a number of uniform zoo-principles are equivalent to arithmetical comprehension inside $\RCAo$.  
In Section \ref{GT}, we formulate a general\footnote{For instance, as shown in Section~\ref{DX}, our template is certainly not limited to $\Pi_{2}^{1}$-formulas, and surprisingly even applies to \emph{contrapositions} of RM zoo principles, including the Ramsey theorems.} template for classifying (past and future) zoo-principles in the same way.    
As will become clear, our template provides a \emph{uniform and elegant} approach to classifying uniform principles originating from the RM zoo;  In other words, the RM \emph{zoo seems to disappear at the uniform level} (but see Remark~\ref{dazoo}).  
As to a possible explanation for this phenomenon, the axiom of \emph{extensionality} plays a central role in our template, as discussed in Remark~\ref{equ3}.  
Another key ingredient of the template is the presence of `little structure' (which is e.g.\ typical of statements from combinatorics) on the objects in RM zoo principles, which gives rise to \emph{non-robust} theorems in the sense of Montalb\'an (\cite{montahue}), as discussed in Section \ref{robu}.   
%This may be interpreted as showing that extensionality cannot yield a fine-grained classification of uniform principles.         

\medskip

To obtain the aforementioned equivalences, Nonstandard Analysis in the form of Nelson's \emph{internal set theory} (\cite{wownelly}), is used \emph{as a tool} in this paper.   
In particular, these equivalences are formulated as theorems of Kohlenbach's base theory $\RCAo$ (See \cite{kohlenbach2} and Section \ref{popo}), 
and are obtained by applying the algorithm $\RS$ (See Section~\ref{GT}) to associated equivalences \emph{in Nonstandard Analysis}.  
Besides providing a streamlined and uniform approach, the use of Nonstandard Analysis via $\RS$ also results in \emph{explicit\footnote{An implication $(\exists \Phi)A(\Phi)\di (\exists \Psi)B(\Psi)$ is \emph{explicit} if there is a term $t$ in the language such that additionally $(\forall \Phi)[A(\Phi)\di B(t(\Phi))]$, i.e.\ $\Psi$ can be explicitly defined in terms of $\Phi$.\label{dirkske}}} equivalences \emph{without extra effort}.  In particular, we shall just prove equivalences inside Nonstandard Analysis \emph{without paying any attention to effective content}, 
and extract the explicit equivalences using the algorithm $\RS$.   This hitherto unknown `computational aspect' of Nonstandard Analysis is perhaps the true surprise of this paper.  % besides the main topic, namely the taming of the zoo.    

\medskip

Finally, as to conceptual considerations, the above-mentioned `disappearance' of the RM zoo suggests that Kohlenbach's \emph{higher-order} RM (\cite{kohlenbach2}) is not just `RM with higher types', 
but a separate field of study giving rise to a completely different classification;  In particular, the latter comes equipped with its own notion of exceptionality, notably different from the one present in Friedman-Simpson-style RM.  
In light of the results in Section~\ref{DX}, one could go even as far as saying that, at the uniform level, \emph{weak K\"onig's lemma is more exceptional than e.g.\ Ramsey's theorem for pairs}, 
as the latter is more robust (at the uniform level) than the former, due to the behaviour of their contrapositions (at the uniform level).  As the saying (sort of) goes, one man's exception is another's mainstream.  

\medskip

In conclusion, the stark contrast in exceptional behaviour between principles from the RM zoo and their uniform counterparts, speaks in favour of the study of higher-order RM.    
Notwithstanding the foregoing, `unconditional' arguments for the study of higher-order RM are also available, as discussed in Section \ref{whereohwhere}.

\section{About and around internal set theory}\label{base}
 % and Friedman's foundational program Reverse Mathematics.  
In this section, we introduce Nelson's \emph{internal set theory}, first introduced in \cite{wownelly}, and its fragment $\P$ from \cite{brie}.
We shall also introduce Kohlenbach's base theory $\RCAo$ from \cite{kohlenbach2}, and the system $\RCAO$, which is based on $\P$.   
\subsection{Introduction: Internal set theory}
In Nelson's \emph{syntactic} approach to Nonstandard Analysis (\cite{wownelly}), as opposed to Robinson's semantic one (\cite{robinson1}), a new predicate `st($x$)', read as `$x$ is standard' is added to the language of \textsf{ZFC}, the usual foundation of mathematics.  
The notations $(\forall^{\st}x)$ and $(\exists^{\st}y)$ are short for $(\forall x)(\st(x)\di \dots)$ and $(\exists y)(\st(y)\wedge \dots)$.  A formula is called \emph{internal} if it does not involve `st', and \emph{external} otherwise.   
The three external axioms \emph{Idealisation}, \emph{Standard Part}, and \emph{Transfer} govern the new predicate `st';  They are respectively defined\footnote{The superscript `fin' in \textsf{(I)} means that $x$ is finite, i.e.\ its number of elements are bounded by a natural number.      } as:  
\begin{enumerate}
\item[\textsf{(I)}] $(\forall^{\st~\textup{fin}}x)(\exists y)(\forall z\in x)\varphi(z,y)\di (\exists y)(\forall^{\st}x)\varphi(x,y)$, for internal $\varphi$ with any (possibly nonstandard) parameters.  
\item[\textsf{(S)}] $(\forall x)(\exists^{\st}y)(\forall^{\st}z)(z\in x\asa z\in y)$.
\item[\textsf{(T)}] $(\forall^{\st}x)\varphi(x, t)\di (\forall x)\varphi(x, t)$, where $\varphi$ is internal,  $t$ is standard and captures \emph{all} parameters of $\varphi$. % and $t$ is standard.  
\end{enumerate}
The system \textsf{IST} is (the internal system) \textsf{ZFC} extended with the aforementioned external axioms;  
The former is a conservative extension of \textsf{ZFC} for the internal language, as proved in \cite{wownelly}.    % i.e.\ without `st'. 

\medskip

In \cite{brie}, the authors study G\"odel's system $\textsf{T}$ extended with special cases of the external axioms of \textsf{IST}.  
In particular, they consider nonstandard extensions of the (internal) systems \textsf{E-HA}$^{\omega}$ and $\textsf{E-PA}^{\omega}$, respectively \emph{Heyting and Peano arithmetic in all finite types and the axiom of extensionality}.       
We refer to \cite{brie}*{\S2.1} for the exact details of these (mainstream in mathematical logic) systems.  
We do mention that in these systems of higher-order arithmetic, each variable $x^{\rho}$ comes equipped with a superscript denoting its type, which is however often implicit.  
As to the coding of multiple variables, the type $\rho^{*}$ is the type of finite sequences of type $\rho$, a notational device used in \cite{brie} and this paper;  Underlined variables $\underline{x}$ consist of multiple variables of (possibly) different type.  

\medskip

In the next section, we introduce the system $\P$ assuming familiarity with the higher-type framework of G\"odel's system $\textsf{T}$ (See e.g.\ \cite{brie}*{\S2.1} for the latter).    
\subsection{The system $\P$}\label{popo}
In this section, we introduce the system $\P$.  We first discuss some of the external axioms studied in \cite{brie}.  
First of all, Nelson's axiom \emph{Standard part} is weakened to $\HAC_{\INT}$ as follows:
\be\tag{$\HAC_{\INT}$}
(\forall^{\st}x^{\rho})(\exists^{\st}y^{\tau})\varphi(x, y)\di (\exists^{\st}F^{\rho\di \tau^{*}})(\forall^{\st}x^{\rho})(\exists y^{\tau}\in F(x))\varphi(x,y),
\ee
where $\varphi$ is any internal formula.  Note that $F$ only provides a \emph{finite sequence} of witnesses to $(\exists^{\st}y)$, explaining its name \emph{Herbrandized Axiom of Choice}.      
Secondly,  Nelson's axiom idealisation \textsf{I} appears in \cite{brie} as follows:  
\be\tag{\textsf{I}}
(\forall^{\st} x^{\sigma^{*}})(\exists y^{\tau} )(\forall z^{\sigma}\in x)\varphi(z,y)\di (\exists y^{\tau})(\forall^{\st} x^{\sigma})\varphi(x,y), 
%?stx?? ?y? ?x??? x???x?,y????y? ?stx? ?(x,y).
\ee
where $\varphi$ is again an internal formula.  
Finally, as in \cite{brie}*{Def.\ 6.1}, we have the following definition.
\bdefi\label{debs}
The set $\T^{*}$ is defined as the collection of all the constants in the language of $\textsf{E-PA}^{\omega*}$.  
The system $ \textsf{E-PA}^{\omega*}_{\st} $ is defined as $ \textsf{E-PA}^{\omega{*}} + \T^{*}_{\st} + \textsf{IA}^{\st}$, where $\T^{*}_{\st}$
consists of the following axiom schemas.
\begin{enumerate}
\item The schema\footnote{The language of $\textsf{E-PA}_{\st}^{\omega*}$ contains a symbol $\st_{\sigma}$ for each finite type $\sigma$, but the subscript is always omitted.  Hence $\T^{*}_{\st}$ is an \emph{axiom schema} and not an axiom.\label{omit}} $\st(x)\wedge x=y\di\st(y)$,
\item The schema providing for each closed term $t\in \T^{*}$ the axiom $\st(t)$.
\item The schema $\st(f)\wedge \st(x)\di \st(f(x))$.
\end{enumerate}
The external induction axiom \textsf{IA}$^{\st}$ is as follows.  
\be\tag{\textsf{IA}$^{\st}$}
\Phi(0)\wedge(\forall^{\st}n^{0})(\Phi(n) \di\Phi(n+1))\di(\forall^{\st}n^{0})\Phi(n).     
\ee
\edefi
%Hence, the system $ \textsf{E-PA}^{\omega*}_{\st} $ is a trivial extension of \textsf{E-PA}$^{\omega^{*}}$.  
For the full system $\P\equiv \textsf{E-PA}^{\omega*}_{\st} +\HAC_{\INT} +\textsf{I}$, we have the following theorem.  
Here, the superscript `$S_{\st}$' is the syntactic translation defined in \cite{brie}*{Def.\ 7.1}, and also listed starting with \eqref{dombu} in the proof of Corollary \ref{consresultcor}.    
\begin{thm}\label{consresult}
Let $\Phi(\tup a)$ be a formula in the language of \textup{\textsf{E-PA}}$^{\omega*}_{\st}$ and suppose $\Phi(\tup a)^\Sh\equiv\forallst \tup x \, \existsst \tup y \, \varphi(\tup x, \tup y, \tup a)$. If $\Delta_{\intern}$ is a collection of internal formulas and
\be\label{antecedn}
\P + \Delta_{\intern} \vdash \Phi(\tup a), 
\ee
then one can extract from the proof a sequence of closed terms $t$ in $\mathcal{T}^{*}$ such that
\be\label{consequalty}
\textup{\textsf{E-PA}}^{\omega*} + \Delta_{\intern} \vdash\  \forall \tup x \, \exists \tup y\in \tup t(\tup x)\ \varphi(\tup x,\tup y, \tup a).
\ee
\end{thm}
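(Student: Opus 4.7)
The plan is a standard Dialectica-style soundness proof: proceed by induction on the length of the derivation $\P + \Delta_{\intern} \vdash \Phi(\vec a)$, and exhibit explicit realizing terms from $\mathcal{T}^*$ at every step of the derivation tree. This is precisely the soundness theorem for the nonstandard functional interpretation $(\cdot)^{S_{st}}$ of \cite{brie}*{\S 7}, so the task is to carry out the case analysis demanded by that interpretation. First I would recall the clauses of the translation, verifying its basic structural properties: $\Phi^{S_{st}}$ always has the announced form $\forall^{\st}\vec x\,\exists^{\st}\vec y\,\varphi$ with $\varphi$ internal; for internal $\Phi$ we have $\Phi^{S_{st}} \equiv \Phi$, so that formulas in $\Delta_{\intern}$ are trivially realized; and the predicate $\st$ is translated via $\st(x)^{S_{st}} \equiv \exists^{\st} y\,(x = y)$.

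Next I would verify the soundness of each axiom of $\P$ by exhibiting closed realizing terms. The axioms of $\textup{E-PA}^{\omega*}$ are handled by the usual Dialectica realizers built from the G\"odel recursors already available in $\mathcal{T}^*$; the three schemas of $\mathcal{T}^*_{\st}$ from Definition \ref{debs} are realized by the identity terms and by the closed terms $t$ themselves; $\HAC_{\INT}$ is realized essentially by itself, since its translation matches its statement almost verbatim (this is the very point of Herbrandising); idealisation $\textsf{I}$ is realized via the sequence-forming combinators of $\mathcal{T}^*$, using that its Herbrandized formulation produces a finite tuple of witnesses that can be collected into the single witness demanded on the right-hand side; and the external induction $\textsf{IA}^{\st}$ is realized by primitive recursion, iterating the realizer of the induction step $n$ times. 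The rules of inference are then handled as in classical Dialectica, the only non-trivial case being modus ponens, where realizers of the two premises are composed.

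The main obstacle will be the verification of the clauses for idealisation and external induction. For $\textsf{I}$, the analysis must track how the functional interpretation interacts with the $\forall^{\st\,\textup{fin}}$ quantifier and confirm that the sequence-combinator from $\mathcal{T}^*$ does produce a genuine witness for the interpreted statement, rather than merely a finite list of candidates. For $\textsf{IA}^{\st}$, one must ensure that the primitive recursor built from the induction step's realizer is itself a closed term of $\mathcal{T}^*$ and realizes the universally-standard conclusion; this is what forces $\P$'s induction to be restricted to the standard naturals, as ordinary $\mathcal{T}^*$-recursion cannot a priori iterate over nonstandard indices. Once all axiom cases and rules are checked, the statement follows immediately: the inductive extraction yields closed terms $\vec t \in \mathcal{T}^*$ witnessing $\Phi(\vec a)^{S_{st}}$, and thus $\textup{E-PA}^{\omega*} + \Delta_{\intern}$ proves \eqref{consequalty}.
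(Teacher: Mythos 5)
Your proposal is correct and is essentially the same approach as the paper's: the paper's entire proof is the single line ``Immediate by \cite{brie}*{Theorem 7.7}'', i.e.\ a direct appeal to the soundness theorem for the $S_{\st}$-interpretation that you have correctly identified and then sketched. Your elaboration of the axiom-by-axiom case analysis (Herbrandized choice realizing itself, idealisation via the sequence combinators, external induction via the recursor) matches the content of the cited theorem, but the paper simply imports that result as a black box rather than reproving it.
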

\begin{proof}
Immediate by \cite{brie}*{Theorem 7.7}.  
\end{proof}
It is important to note that the proof of the soundness theorem in \cite{brie}*{\S7} provides a \emph{term extraction algorithm} $\mathcal{A}$ to obtain the term $t$ from the theorem.  

\medskip

The following corollary is essential to our results.  We shall refer to formulas of the form $(\forall^{\st}\underline{x})(\exists^{\st}\underline{y})\psi(\underline{x},\underline{y}, \underline{a})$ for internal $\psi$ as (being in) \emph{the normal form}.    
\begin{cor}\label{consresultcor}
If for internal $\psi$ the formula $\Phi(\underline{a})\equiv(\forall^{\st}\underline{x})(\exists^{\st}\underline{y})\psi(\underline{x},\underline{y}, \underline{a})$ satisfies \eqref{antecedn}, then 
$(\forall \underline{x})(\exists \underline{y}\in t(\underline{x}))\psi(\underline{x},\underline{y},\underline{a})$ is proved in the corresponding formula \eqref{consequalty}.  
\end{cor}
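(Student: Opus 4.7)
The plan is to derive this as a direct specialisation of Theorem \ref{consresult}, the only genuine work being to observe that when $\Phi(\underline{a})$ is \emph{already} presented in the normal form $(\forall^{\st}\underline{x})(\exists^{\st}\underline{y})\psi(\underline{x},\underline{y},\underline{a})$ with $\psi$ internal, its Shoenfield-style translation $\Phi^{S_{\st}}$ coincides, up to harmless bookkeeping, with $\Phi$ itself.

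First I would unfold the definition of $(\cdot)^{S_{\st}}$ from \cite{brie}*{Def.\ 7.1}: on internal subformulas the translation is the identity, while on external quantifiers it prenexes $\forall^{\st}, \exists^{\st}$ so as to produce an overall shape $\forall^{\st}\underline{u}\,\exists^{\st}\underline{v}\,\varphi$ with $\varphi$ internal. A short induction on the external quantifier prefix then shows that if $\Phi$ is already of this shape, then $\Phi^{S_{\st}}\equiv\Phi$ up to at most a trivial rearrangement of the tuples $\underline{x},\underline{y}$, which is in any case absorbable into the terms extracted by the algorithm $\mathcal{A}$.

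With this identification in hand, applying Theorem \ref{consresult} to the assumed provability \eqref{antecedn} directly yields closed terms $\underline{t}\in\T^{*}$ for which the conclusion \eqref{consequalty} holds; reading off $\Phi^{S_{\st}}\equiv\Phi$, the resulting internal statement in \eqref{consequalty} is exactly $(\forall \underline{x})(\exists \underline{y}\in \underline{t}(\underline{x}))\,\psi(\underline{x},\underline{y},\underline{a})$, which is precisely what the corollary asserts.

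The only step of any real substance --- and hence the (mild) main obstacle --- is the inductive verification that $(\cdot)^{S_{\st}}$ really acts as the identity on formulas already in the prescribed normal form; this has to be checked against the precise clauses of \cite{brie}*{Def.\ 7.1}, in particular those handling tuple quantifiers and the trivial case where the internal matrix contains no occurrence of $\st$. This is routine syntactic bookkeeping and presents no difficulty beyond what is already carried out in \cite{brie}, so the corollary is essentially a direct read-off of Theorem \ref{consresult}.
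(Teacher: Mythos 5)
Your proposal is correct and follows essentially the same route as the paper: reduce the corollary to verifying that the $S_{\st}$-translation fixes the normal form $(\forall^{\st}\underline{x})(\exists^{\st}\underline{y})\psi$ (modulo an innocuous extra bounded quantifier that can be absorbed into the extracted term), and then invoke Theorem~\ref{consresult}. The paper simply carries out in full the inductive computation through the clauses of \cite{brie}*{Def.\ 7.1} that you defer as ``routine syntactic bookkeeping''; if you wrote it up you would have to be a bit more careful than ``prenexing,'' since $\forall^{\st},\exists^{\st}$ are not primitives and the translation threads through the negation and $(\forall z)$ clauses, producing the Herbrand-style bounded existential that must then be collapsed.
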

\begin{proof}
Clearly, if for $\psi$ and $\Phi$ as given we have $\Phi(\underline{a})^{S_{\st}}\equiv \Phi(\underline{a})$, then the corollary follows immediately from the theorem.  
A tedious but straightforward verification using the clauses (i)-(v) in \cite{brie}*{Def.\ 7.1} establishes that indeed $\Phi(\underline{a})^{S_{\st}}\equiv \Phi(\underline{a})$.  
For completeness, we now list these five inductive clauses and perform this verification.  

\medskip

Hence, if $\Phi(\underline{a})$ and $\Psi(\underline{b})$  in the language of $\P$ have the following interpretations
\be\label{dombu}
\Phi(\underline{a})^{S_{\st}}\equiv (\forall^{\st}\underline{x})(\exists^{\st}\underline{y})\varphi(\underline{x},\underline{y},\underline{a}) \textup{ and } \Psi(\underline{b})^{S_{\st}}\equiv (\forall^{\st}\underline{u})(\exists^{\st}\underline{v})\psi(\underline{u},\underline{v},\underline{b}),
%?Sst(a):??stx?sty?S(x,y,a) and ?Sst(b):??stu?stv?S(u,v,b),
\ee
then they interact as follows with the logical connectives by \cite{brie}*{Def.\ 7.1}:
\begin{enumerate}[(i)]
\item $\psi^{S_{\st}}:=\psi$ for atomic internal $\psi$.  
%?Sst :? ? for atomic internal ?(a), (ii) (st(z))Sst :??stx (z=x),
\item$ \big(\st(z)\big)^{S_{\st}}:=(\exists^{\st}x)(z=x)$.
\item $(\neg \Phi)^{S_{\st}}:=(\forall^{\st} \underline{Y})(\exists^{\st}\underline{x})(\forall \underline{y}\in \underline{Y}[\underline{x}])\neg\varphi(\underline{x},\underline{y},\underline{a})$.  
%(iii) (¬?)Sst :??stY ?stx?y?Y[x]¬?S(x,y,a),
\item$(\Phi\vee \Psi)^{S_{\st}}:=(\forall^{\st}\underline{x},\underline{u})(\exists^{\st}\underline{y}, \underline{v})[\varphi(\underline{x},\underline{y},\underline{a})\vee \psi(\underline{u},\underline{v},\underline{b})]$
%(iv) (???)Sst :??stx,u ?sty,v (?S(x,y,a)??S(u,v,b)),???????????????????????????????????????
\item $\big( (\forall z)\Phi \big)^{S_{\st}}:=(\forall^{\st}\underline{x})(\exists^{\st}\underline{y})(\forall z)(\exists \underline{y}'\in \underline{y})\varphi(\underline{x},\underline{y}',z)$
%(v) (?z?)Sst :??stx?sty?z?y??y?S(x,y?,z).
\end{enumerate}
Hence, fix $\Phi_{0}(\underline{a})\equiv(\forall^{\st}\underline{x})(\exists^{\st}\underline{y})\psi_{0}(\underline{x},\underline{y}, \underline{a})$ with internal $\psi_{0}$, and note that $\phi^{S_{\st}}\equiv\phi$ for any internal formula.  
We have $[\st(\underline{y})]^{S_{\st}}\equiv (\exists^{\st} \underline{w})(\underline{w}=\underline{y})$ and also 
\[
[\neg\st(\underline{y})]^{S_{\st}}\equiv (\forall^{\st} \underline{W} ) (\exists^{\st}\underline{x})(\forall \underline{w}\in \underline{W}[\underline{x}])\neg(\underline{w}=\underline{y})\equiv (\forall^{\st}\underline{w})(\underline{w}\ne \underline{y}).  
\]    
Hence, $[\neg\st(\underline{y})\vee\neg \psi_{0}(\underline{x}, \underline{y}, \underline{a})]^{S_{\st}}$ is just $(\forall^{\st}\underline{w})[(\underline{w}\ne \underline{y}) \vee \neg \psi_{0}(\underline{x}, \underline{y}, \underline{a})]$, and 
\[
\big[(\forall \underline{y})[\neg\st(\underline{y})\vee \neg\psi_{0}(\underline{x}, \underline{y}, \underline{a})]\big]^{S_{\st}}\equiv
 (\forall^{\st}\underline{w})(\exists^{\st}\underline{v})(\forall \underline{y})(\exists \underline{v}'\in \underline{v})[\underline{w}\ne\underline{y}\vee \neg\psi_{0}(\underline{x}, \underline{y}, \underline{a})].
%(\forall^{\st} \underline{Z})(\exists^{\st}\underline{w})(\forall \underline{z}\in \underline{Z}[\underline{w}])\neg[(\underline{w}\ne \underline{y}) \vee \psi_{0}(\underline{x}, \underline{y}, \underline{a})],
\]
which is just $(\forall^{\st}\underline{w})(\forall \underline{y})[(\underline{w}\ne \underline{y}) \vee \neg\psi_{0}(\underline{x}, \underline{y}, \underline{a})]$.  Furthermore, we have
\begin{align*}
\big[(\exists^{\st}y)\psi_{0}(\underline{x}, \tup y, \tup a)\big]^{S_{\st}}&\equiv\big[\neg(\forall \underline{y})[\neg\st(\underline{y})\vee\neg \psi_{0}(\underline{x}, \underline{y}, \underline{a})]\big]^{S_{\st}}\\
&\equiv(\forall^{\st} \underline{V})(\exists^{\st}\underline{w})(\forall \underline{v}\in \underline{V}[\underline{w}])\neg[(\forall \underline{y})[(\underline{w}\ne \underline{y}) \vee \neg\psi_{0}(\underline{x}, \underline{y}, \underline{a})]].\\
&\equiv (\exists^{\st}\underline{w})(\exists \underline{y})[(\underline{w}= \underline{y}) \wedge \psi_{0}(\underline{x}, \underline{y}, \underline{a})]]\equiv (\exists^{\st}\underline{w})\psi_{0}(\underline{x}, \underline{w}, \underline{a}).
\end{align*}
Hence, we have proved so far that $(\exists^{\st}\underline{y})\psi_{0}(\underline{x}, \underline{y}, \underline{a})$ is invariant under $S_{\st}$.  By the previous, we also obtain:  
\[
\big[\neg \st(\underline{x})\vee (\exists^{\st}y)\psi_{0}(\underline{x}, \tup y, \tup a)\big]^{S_{\st}}\equiv  (\forall^{\st}\underline{w}')(\exists^{\st} \underline{w})[(\underline{w}'\ne \underline{x}) \vee \psi_{0}(\tup x, \tup w, \tup a)].
\]
Our final computation now yields the desired result: 
\begin{align*}
\big[(\forall^{\st} \underline{x})(\exists^{\st}y)\psi_{0}(\underline{x}, \tup y, \tup a)\big]^{S_{\st}}
&\equiv\big[(\forall \underline{x})(\neg \st(\underline{x})\vee (\exists^{\st}y)\psi_{0}(\underline{x}, \tup y, \tup a))\big]^{S_{\st}}\\
&\equiv(\forall^{\st}\underline{w}')(\exists^{\st} \underline{w})(\forall \underline{x})(\exists \underline{w}''\in \underline{w})[(\underline{w}'\ne \underline{x}) \vee \psi_{0}(\tup x, \tup w'', \tup a)].\\
&\equiv(\forall^{\st}\underline{w}')(\exists^{\st} \underline{w})(\exists \underline{w}''\in \underline{w}) \psi_{0}(\tup w', \tup w'', \tup a).
\end{align*}
The last step is obtained by taking $\underline{x}=\underline{w}'$.  Hence, we may conclude that the normal form $(\forall^{\st} \underline{x})(\exists^{\st}y)\psi_{0}(\underline{x}, \tup y, \tup a)$ is invariant under $S_{\st}$, and we are done.    
\end{proof}
% $\textsf{E-PRA}^{\omega^{*}}$
%Let us abbreviate $\textup{{E-PRA}}^{\omega*}_{\st} + \textsf{\textup{I}} + \textup{HAC}_{\textup{int}} + \QFAC^{1,0}$ by $\P$.
Finally, the previous theorems do not really depend on the presence of full Peano arithmetic.  
Indeed, let \textsf{E-PRA}$^{\omega}$ be the system defined in \cite{kohlenbach2}*{\S2} and let \textsf{E-PRA}$^{\omega*}$ be its extension with types for finite sequences as in \cite{brie}*{\S2}.  
\begin{cor}\label{consresultcor2}
The previous theorem and corollary go through for $\P$ replaced by $\P_{0}\equiv \textsf{\textup{E-PRA}}^{\omega*}+\T_{\st}^{*} +\HAC_{\INT} +\textsf{\textup{I}}$.  
\end{cor}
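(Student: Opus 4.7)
The plan is to trace through the proof of \cite{brie}*{Theorem~7.7}, on which Theorem~\ref{consresult} rests, and to verify that the argument goes through when internal Peano arithmetic is weakened to primitive recursive arithmetic. First, I would observe that the syntactic translation $S_{\st}$ of \cite{brie}*{Def.~7.1} is defined by recursion on the complexity of formulas and is entirely independent of the ambient internal arithmetic; hence both $S_{\st}$ itself and the case-distinction verifications performed in the soundness proof behave identically whether one starts from $\textsf{E-PA}^{\omega*}$ or from $\textsf{E-PRA}^{\omega*}$. Moreover, the external axioms $\HAC_{\INT}$, $\textsf{I}$, and $\T^{*}_{\st}$ are formulated in exactly the same way in $\P$ and in $\P_{0}$, so their treatment in the soundness proof transfers verbatim.

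The main step is to check that the term-extraction algorithm $\mathcal{A}$ produces closed terms lying in the primitive-recursive term calculus $\T^{*}$ of $\textsf{E-PRA}^{\omega*}$. Going case-by-case through the axioms and rules of $\P$, and in particular through the clauses (i)--(v) of \cite{brie}*{Def.~7.1}, the witnessing terms extracted at each inductive step are built by pairing, concatenation and projection on finite sequences, case distinctions, and composition, together with a meta-level recursion on the length of the given derivation. Each of these operations is already present in $\textsf{E-PRA}^{\omega*}$ via its recursor on the natural numbers, so the inductive construction of the witnessing tuple $t$ can be carried out entirely inside $\T^{*}$; no appeal to a recursor at higher types is ever required.

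Once Theorem~\ref{consresult} has been verified in the $\P_{0}$-setting, the proof of Corollary~\ref{consresultcor} transfers automatically: its argument is purely the syntactic verification that any normal-form statement $(\forall^{\st}\underline{x})(\exists^{\st}\underline{y})\psi(\underline{x},\underline{y},\underline{a})$ with internal $\psi$ is provably equivalent to its $S_{\st}$-translate, and this verification is independent of the underlying internal arithmetic, as is clear from the derivation carried out in the proof of Corollary~\ref{consresultcor}.

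The main obstacle is the bookkeeping task of the second paragraph: namely, confirming that every inductive step in the soundness proof of \cite{brie}*{\S7} is realised by a term of $\T^{*}$, and that no rule of $\P$ covertly invokes internal full induction. In practice this amounts to re-reading \cite{brie}*{\S7} under the restriction, a routine exercise which is standard when passing from a PA-based to a PRA-based variant of a conservative extension result. Everything else in the passage from $\P$ to $\P_{0}$ is either a syntactic operation independent of the arithmetic, or an external principle stated identically in the two systems.
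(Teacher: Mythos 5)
Your proposal is correct and follows essentially the same route as the paper: re-examine the soundness proof of \cite{brie}*{Theorem 7.7} and verify that neither the translation $S_{\st}$ nor the term-extraction steps require anything beyond what is available in the weaker internal system. The paper's own proof is sharper and shorter, noting that only $\textsf{I}\Delta_{0}+\textsf{EXP}$ (i.e.\ \textsf{EFA}) is needed because exponentiation already suffices to manipulate finite sequences — a weaker hypothesis than primitive recursion, though of course $\textsf{E-PRA}^{\omega*}$ contains it.
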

\begin{proof}
The proof of \cite{brie}*{Theorem 7.7} goes through for any fragment of \textsf{E-PA}$^{\omega{*}}$ which includes \textsf{EFA}, sometimes also called $\textsf{I}\Delta_{0}+\textsf{EXP}$.  
In particular, the exponential function is (all what is) required to `easily' manipulate finite sequences.    
\end{proof}
Finally, we define $\RCAO$ as the system $\P_{0}+\QFAC^{1,0}$.  Recall that Kohlenbach defines $\RCAo$ in \cite{kohlenbach2}*{\S2} as \textsf{E-PRA}$^{\omega}+\QFAC^{1,0}$ 
where the latter is the axiom of choice limited to formulas $(\forall f^{1})(\exists n^{0})\varphi_{0}(f, n)$, $\varphi_{0}$ quantifier-free .     

\subsection{Notations and remarks}
We introduce some notations regarding $\RCAO$.  First of all, we shall follow Nelson's notations as in \cite{bennosam}, and given as follows.
\begin{rem}[Standardness]\label{notawin}\rm
As suggested above, we write $(\forall^{\st}x^{\tau})\Phi(x^{\tau})$ and also $(\exists^{\st}x^{\sigma})\Psi(x^{\sigma})$ as short for 
$(\forall x^{\tau})\big[\st(x^{\tau})\di \Phi(x^{\tau})\big]$ and $(\exists x^{\sigma})\big[\st(x^{\sigma})\wedge \Psi(x^{\sigma})\big]$.     
We also write $(\forall x^{0}\in \Omega)\Phi(x^{0})$ and $(\exists x^{0}\in \Omega)\Psi(x^{0})$ as short for 
$(\forall x^{0})\big[\neg\st(x^{0})\di \Phi(x^{0})\big]$ and $(\exists x^{0})\big[\neg\st(x^{0})\wedge \Psi(x^{0})\big]$.  Furthermore, if $\neg\st(x^{0})$ (resp.\ $\st(x^{0})$), we also say that $x^{0}$ is `infinite' (resp.\ finite) and write `$x^{0}\in \Omega$'.  %In keeping with the internal 
Finally, a formula $A$ is `internal' if it does not involve $\st$, and $A^{\st}$ is defined from $A$ by appending `st' to all quantifiers (except bounded number quantifiers).    
\end{rem}
Secondly, we shall use the usual notations for rational and real numbers and functions as introduced in \cite{kohlenbach2}*{p.\ 288-289} (and \cite{simpson2}*{I.8.1} for the former).  
%Nonetheless, we try to avoid set-theoretic notation involving $\Q$ and $\R$ to avoid confusion.    
\begin{rem}[Real number]\label{forrealsyo}\rm
A (standard) real number $x$ is a (standard) fast-converging Cauchy sequence $q_{(\cdot)}^{1}$, i.e.\ $(\forall n^{0}, i^{0})(|q_{n}-q_{n+i})|<_{0} \frac{1}{2^{n}})$.  
We freely make use of Kohlenbach's `hat function' from \cite{kohlenbach2}*{p.\ 289} to guarantee that every sequence $f^{1}$ can be viewed as a real.  
Two reals $x, y$ represented by $q_{(\cdot)}$ and $r_{(\cdot)}$ are \emph{equal}, denoted $x=y$, if $(\forall n)(|q_{n}-r_{n}|\leq \frac{1}{2^{n}})$. Inequality $<$ is defined similarly.         
We also write $x\approx y$ if $(\forall^{\st} n)(|q_{n}-r_{n}|\leq \frac{1}{2^{n}})$ and $x\gg y$ if $x>y\wedge x\not\approx y$.  Functions $F$ mapping reals to reals are represented by functionals $\Phi^{1\di 1}$ such that $(\forall x, y)(x=y\di \Phi(x)=\Phi(y))$, i.e.\ equal reals are mapped to equal reals.  Finally, sets are denoted $X^{1}, Y^{1}, Z^{1}, \dots$ and are given by their characteristic functions $f^{1}_{X}$, i.e.\ $(\forall x^{0})[x\in X\asa f_{X}(x)=1]$, where $f_{X}^{1}$ is assumed to be binary.      
\end{rem}
%\medskip
%As hinted at by Corollary \ref{genall}, 
Finally, the notion of equality in $\RCAO$ is important to our enterprise.  
\begin{rem}[Equality]\label{equ}\rm
The system $\RCAo$ includes equality between natural numbers `$=_{0}$' as a primitive.  Equality `$=_{\tau}$' for type $\tau$-objects $x,y$ is defined as follows:
\be\label{aparth}
[x=_{\tau}y] \equiv (\forall z_{1}^{\tau_{1}}\dots z_{k}^{\tau_{k}})[xz_{1}\dots z_{k}=_{0}yz_{1}\dots z_{k}]
\ee
if the type $\tau$ is composed as $\tau\equiv(\tau_{1}\di \dots\di \tau_{k}\di 0)$.
In the spirit of Nonstandard Analysis, we define `approximate equality $\approx_{\tau}$' as follows:
\be\label{aparth2}
[x\approx_{\tau}y] \equiv (\forall^{\st} z_{1}^{\tau_{1}}\dots z_{k}^{\tau_{k}})[xz_{1}\dots z_{k}=_{0}yz_{1}\dots z_{k}]
\ee
with the type $\tau$ as above.  
%As noted in \cite{brie}*{p.\ 1973}, weak nonstandard systems can include the axiom of extensionality for $=_{\tau}$, but not for $\approx_{\tau}$.
Furthermore, the system $\RCAo$ includes the axiom of extensionality as follows:
\be\label{EXT}\tag{\textsf{E}}  
(\forall \varphi^{\rho\di \tau})(\forall  x^{\rho},y^{\rho}) \big[x=_{\rho} y \di \varphi(x)=_{\tau}\varphi(y)   \big].
\ee
However, as noted in \cite{brie}*{p.\ 1973}, the axiom of \emph{standard extensionality} \eqref{EXT}$^{\st}$ cannot be included in the system $\P$ (and hence $\RCAO$).  
%Although \eqref{EXT}$^{\st}$ is not available in $\RCAO$, it will turn out to be essential for our results.  
Finally, a functional $\Xi^{ 1\di 0}$ is called an \emph{extensionality functional} for $\varphi^{1\di 1}$ if 
\be\label{turki}
(\forall k^{0}, f^{1}, g^{1})\big[ \overline{f}\Xi(f,g, k)=_{0}\overline{g}\Xi(f,g,k) \di \overline{\varphi(f)}k=_{0}\overline{\varphi(g)}k \big].  
\ee
In other words, $\Xi$ witnesses \eqref{EXT} for $\Phi$.  As will become clear in Section \ref{tempie}, standard extensionality is translated by our algorithm $\RS$ into the existence of an extensionality functional, and the latter amounts to merely an unbounded search.   % (or $   
\end{rem}
%As an example illustrating the previous remark, the functional from UWKL$^{\st}$ is standard extensional if it outputs the left-most path in an infinite binary tree, and this property naturally emerges in the proof of Theorem \ref{trikke}.  
%Finally, in light of Corollary \ref{genall}, it is obvious how $\Omega$-CA can be further generalised to $F^{(\sigma\times 0)\di \tau}$ using $\approx_{\tau}$ instead of $\approx_{1}$.  The same holds for `$\approx$' if $\tau=1$ and $F$ is a real-valued function.    

\section{Classifying \UDNR}\label{CUDNR}
%L-DNR: least DNR function. 
%
%\medskip
In this section, we prove that the principle $\UDNR$ from the introduction is equivalent to arithmetical comprehension $(\exists^{2})$ as follows:
\be\tag{$\exists^{2}$}
(\exists \varphi^{2})(\forall g^{1})\big[(\exists x^{0})g(x)=0 \asa \varphi(g)=0  \big].
\ee
We shall even establish an \emph{explicit} equivalence between $\UDNR$ and a version of $(\exists^{2})$.
\bdefi[Explicit implication]
An implication $(\exists \Phi)A(\Phi)\di (\exists \Psi)B(\Psi)$ is \emph{explicit} if there is a term $t$ in the language such that additionally $(\forall \Phi)[A(\Phi)\di B(t(\Phi))]$, i.e.\ $\Psi$ can be explicitly defined in terms of $\Phi$.  
\edefi
To establish the aforementioned explicit equivalence, we shall obtain a suitable \emph{nonstandard} equivalence in $\RCAO$, and apply Corollary \ref{consresultcor}.  
We first prove the following theorem, where $\UDNR^{+}$ is 
\[
(\exists^{\st}\Psi^{1\di 1})\big[(\forall^{\st} A^{1})(\forall e^{0})(\Psi(A)(e)\ne \Phi_{e}^{A}(e))\wedge (\forall^{\st} C^{1}, D^{1})\big(C\approx_{1} D \di \Psi(C)\approx_{1}\Psi(D) \big)\big].  
\]
Note that the second conjunct expresses that $\Psi$ is \emph{standard extensional} (See Remark~\ref{equ}).  
We also need the following restriction of Nelson's axiom \emph{Transfer}:
\be\tag{$\paai$}
(\forall^{\st}f^{1})\big[(\forall^{\st}n^{0})f(n)=0\di (\forall m)f(m)=0\big].
\ee  
\begin{thm}\label{UDNRPLUS}
In $\RCAO$, we have $\UDNR^{+}\asa \paai$.  
\end{thm}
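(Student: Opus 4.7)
\emph{Plan.} I would prove each direction separately in $\RCAO$, making essential use of $\HAC_{\INT}$, idealisation $\textsf{I}$, and the hypothesis at hand.

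\emph{From $\paai$ to $\UDNR^{+}$:} The predicate ``$\Phi_{e}^{A}(e)\!\downarrow$'' unfolds to an internal $\Sigma_{1}^{0}$ statement $(\exists s^{0})T(e,A,e,s)$, and $\paai$ (in its contrapositive $\Sigma_{1}^{0}$-form) equates this with its standard-quantifier version on standard inputs. A case-split on halting versus non-halting, cast into the normal form $(\forall^{\st}e, A)(\exists^{\st}s, b)\psi(e,A,s,b)$ with $\psi$ internal, is then fed through $\HAC_{\INT}$ to extract a standard functional $\mu$ deciding halting of $\Phi_{e}^{A}(e)$ for all standard $e, A$. Setting $\Psi(A)(e):=\Phi_{e}^{A}(e)+1$ when $\mu$ reports halting and $0$ otherwise gives a standard functional (by clause (3) of $\T^{*}_{\st}$) with $\Psi(A)(e)\neq \Phi_{e}^{A}(e)$ for every standard $A$ by construction. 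Standard extensionality is then free: for standard $A\approx_{1}B$, the standard map $n\mapsto |A(n)-B(n)|$ is standard-zero, so $\paai$ forces $A=B$ outright, whence $\Psi(A)=\Psi(B)$ and a fortiori $\Psi(A)\approx_{1}\Psi(B)$.

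\emph{From $\UDNR^{+}$ to $\paai$:} Argue by contradiction. Suppose $\Psi$ witnesses $\UDNR^{+}$ and $\paai$ fails via a standard $f^{1}$ (WLOG $\{0,1\}$-valued) with $(\forall^{\st}n)f(n)=0$ and some necessarily nonstandard $m_{0}$ satisfying $f(m_{0})=1$. Writing $\mathbf{0}$ for the standard all-zero sequence, we have $f\approx_{1}\mathbf{0}$, so the standard-extensionality clause of $\UDNR^{+}$ forces $\Psi(f)(e)=\Psi(\mathbf{0})(e)$ for every standard $e$. The aim is to produce a standard index $e^{*}$ with $\Phi_{e^{*}}^{f}(e^{*})=\Psi(\mathbf{0})(e^{*})$: DNR at the standard oracle $f$ then gives $\Psi(f)(e^{*})\neq \Psi(\mathbf{0})(e^{*})$, in direct conflict with standard extensionality at the standard index $e^{*}$. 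Such $e^{*}$ is obtained from Kleene's recursion theorem, internal to $\RCAO$, applied to the primitive recursive operation $(e,v)\mapsto$ ``index of: on oracle $A$, search $A$ for a first nonzero position and return $v$'' (the search terminating internally on $A=f$ at $m_{0}$). To slip $v=\Psi(\mathbf{0})(e)$ into the fixed point without assuming $\Psi(\mathbf{0})$ computable, I would package $\Psi(\mathbf{0})$ as the second coordinate of a standard paired oracle $\langle f,\Psi(\mathbf{0})\rangle$, apply $\UDNR^{+}$ at this standard input, and then close the loop through standard extensionality of $\Psi$ combined with extensionality of pairing ($\langle f,\Psi(\mathbf{0})\rangle\approx_{1}\langle\mathbf{0},\Psi(\mathbf{0})\rangle$), bringing the contradiction back to the form above.

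\emph{Main obstacle.} The delicate step is precisely this self-reference: $\Psi(\mathbf{0})$ is standard but not a priori computable, so it cannot be hard-coded into a plain partial-recursive index and must be supplied via pairing, which forces DNR to be invoked at a paired oracle rather than at $f$ directly. The resulting chain of extensionalities that identifies the relevant $\Psi$-values requires careful bookkeeping to ensure that no $\approx_{1}$-link is severed when moving between paired and unpaired oracles. Moreover, since the algorithm $\RS$ is intended to extract \emph{explicit} witness-terms from the completed proof, every appeal to $\HAC_{\INT}$ and to the recursion theorem must be arranged so that the end statement is of the form $(\forall^{\st}\underline{x})(\exists^{\st}\underline{y})\psi$ with $\psi$ internal, enabling Corollary~\ref{consresultcor} to deliver the term $t$ that eventually yields the explicit classical equivalence $\UDNR\asa (\exists^{2})$.
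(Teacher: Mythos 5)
Your forward direction ($\paai\Rightarrow\UDNR^{+}$) is essentially the paper's: use $\Sigma_1^0$-Transfer to stabilize the bounded halting computation at a standard stage, pass through $\HAC_{\INT}$ to harvest a standard bound, and define $\Psi$ from it; your quick argument that for \emph{standard} $A\approx_1 B$ the principle $\paai$ forces $A=_1 B$ (so standard extensionality is automatic) is exactly what the paper uses, if slightly more cleanly phrased.

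The reverse direction, however, has a genuine gap, and it is precisely the one you flagged yourself. Your pairing device places $\Psi(\mathbf{0})$ in the \emph{standard-visible} part of the oracle: the oracles $\langle f,\Psi(\mathbf{0})\rangle$ and $\langle \mathbf{0},\Psi(\mathbf{0})\rangle$ are $\approx_1$-equal to each other, but neither is $\approx_1$-equal to $\mathbf{0}$, so standard extensionality never links $\Psi(\langle\cdot,\Psi(\mathbf{0})\rangle)$ back to $\Psi(\mathbf{0})$. Chasing your inequalities you get $\Psi(\langle\mathbf{0},\Psi(\mathbf{0})\rangle)(e^{*})\ne\Psi(\mathbf{0})(e^{*})$, which is not a contradiction -- it is just $\Psi$ disagreeing at two genuinely different inputs. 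No amount of extensionality bookkeeping will repair this, because the quantity you want to feed into the index must be computable \emph{from the oracle} while leaving the oracle $\approx_1$-invariant, and your paired oracle fails the second requirement.

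The paper's fix dispenses with both the recursion theorem and pairing: it fixes a single $\Psi$-independent index $e_1$ (``search the oracle upward for its first member, decode it as a pair, return the second coordinate''), sets $C=\emptyset$, and defines $D$ to contain the single element $\pi(m_0,\Psi(C)(e_1))$ where $m_0$ is the nonstandard point at which $h$ first becomes nonzero. The value $\Psi(C)(e_1)$ is thereby smuggled into the oracle, but only \emph{at a nonstandard position} (the Cantor pair $\pi(m_0,\cdot)$ is $\geq m_0$), so $C\approx_1 D$ still holds and standard extensionality gives $\Psi(C)(e_1)=\Psi(D)(e_1)$. Meanwhile $\Phi_{e_1}^{D}(e_1)$ internally converges and returns $\Psi(C)(e_1)$, so $\UDNR$ at the standard input $D$ yields $\Psi(D)(e_1)\ne\Psi(C)(e_1)$, a contradiction. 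The decisive idea you are missing is this injection of the diagonal value into the oracle at a nonstandard location, rather than as a standardly-visible second component.
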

\begin{proof}
To prove $\paai \di \textup{\UDNR}^{+}$, define:  
\be\label{kukkkk}
\Theta(A,M)(e):=
\begin{cases}
 \Phi_{e,M}^{A}(e)+1& (\exists y,s\leq M)(\Phi_{e,s}^{A}(e)=y) \\
0 & \textup{otherwise}
\end{cases}.
\ee
%This functional is called the \emph{canonical approximation} of the functional from UDNR.  
Assuming $\paai$, the functional from \eqref{kukkkk} clearly satisfies:
\be\label{frik2}
(\forall^{\st}e^{0}, A^{1})(\forall M,N\in \Omega)\big[\Theta(A,M)(e)=\Theta(A,N)(e)\big].
\ee
The formula \eqref{frik2} clearly implies 
\be\label{frik}
(\forall^{\st}e^{0}, A^{1})(\exists k^{0})(\forall M,N\geq k)\big[\Theta(A,M)(e)=\Theta(A,N)(e)\big].
\ee
Since $\RCAO$ proves minimisation for $\Pi_{1}^{0}$-formulas, there is a least $k$ as in \eqref{frik}, which must be finite by \eqref{frik2}.  Hence, we obtain:
\be\label{frik3}
(\forall^{\st}e^{0}, A^{1})(\exists^{\st} k^{0})(\forall M,N\geq k)\big[\Theta(A,M)(e)=\Theta(A,N)(e)\big].
\ee
Applying \HAC$_{\INT}$, there is a standard functional $\Psi^{2}$ such that 
\be\label{frik4}
(\forall^{\st}e^{0}, A^{1})(\exists k^{0}\in \Psi(A, e))(\forall M,N\geq k)\big[\Theta(A,M)(e)=\Theta(A,N)(e)\big].
\ee
Now define $\Xi(A)(e)$ as $\Theta(A, \zeta(A, e))(e)$, where $\zeta(A, e)$ is the maximum of $\Psi(A, e)(i)$ for $i<|\Psi(A, e)|$.  We then have that:
\be\label{kukkkkl}
(\forall^{\st}e^{0}, A^{1})(\forall M\in \Omega)\big[\Theta(A,M)(e)=\Xi(A)(e)\big].
\ee
By definition of $\Theta$ in \eqref{kukkkk}, $\Xi$ is standard extensional (which follows from applying $\paai$ to the associated axiom of extensionality) 
and satisfies, for standard $A$, the formula $(\forall^{\st} e^{0})\big[\Xi(A)(e)\ne \Phi_{e}^{A}(e)\big]$, where the `st' predicates in the latter formula may be dropped by $\paai$.  
Hence, $\Xi$ is as required for \UDNR$^{+}$.  %In particular, the functional is standard extensional by \eqref{kukkkk} and \eqref{kukkkkl}.  

\medskip

We now prove ${\UDNR}^{+}\di \paai$.  To this end, assume the former and suppose the latter is false, i.e.\ there is standard $h^{1}$ such that $(\forall^{\st}n)h(n)=0$ and $(\exists m)h(m)\ne0$.   
Next, fix a standard pairing function $\pi^{1}$ and its inverse $\xi^{1}$.  Now let the \emph{standard} number $e_{1}$ be the code of the following program:  On input $n$, set $k=n$ and check if $k\in A$ and if so, return the second component of $\xi(k)$;  If $k\not\in A$, repeat for $k+1$.  Intuitively speaking, $e_{1}$ is such that $\Phi_{e_{1}}^{A}(n)$ outputs $m$ if starting at $k=n$, we eventually find $\pi((l,m))\in A$, and undefined otherwise. 
Furthermore, define $C=\emptyset$ (which is the sequence $00\dots$) and 
\[
D=\{\pi(e, \Psi(C)(e_{1})): h(e)\ne 0\wedge (\forall i<e)h(i)= 0\}, 
\]   
where $h$ is the exception to $\paai$.  
Note that $C\approx_{1}D$ by definition, implying that $\Psi$ satisfies $\Psi(C)\approx_{1} \Psi(D)$ due to its standard extensionality.  % (proved in the third paragraph of this proof).  
However, the latter combined with $\UDNR$ gives us:
\be\label{kilopol}
\Psi(C)(e_{1})=_{0}\Psi(D)(e_{1})\ne_{0} \Phi_{e_{1}, m_{0}}^{D}(e_{1})=_{0}\Psi(C)(e_{1}),   
\ee
for large enough (infinite) $m_{0}$.
This  contradiction yields the theorem.  
\end{proof}
For the following theorem, we require Feferman's mu-operator:
\be\label{mu}\tag{$\mu^{2}$}
(\exists \mu^{2})\big[(\forall f^{1})( (\exists n)f(n)=0 \di f(\mu(f))=0)    \big], 
\ee
which is equivalent to $(\exists^{2})$ over $\RCAo$ by \cite{kohlenbach2}*{Prop.\ 3.9}.   As to notation, denote by $\textsf{MU}(\mu)$ the formula in square brackets in \eqref{mu} and denote by $\UDNR(\Psi)$ the formula in square brackets in \textsf{UDNR}.  
We have the following theorem.  % which hinges on the fact that $\paai$ has a normal form.  % $(\forall^{\st}x)(\exists^{\st}y)\varphi(x, y)$. 
\begin{thm}\label{sef}
From the proof of $\UDNR^{+}\asa \paai$ in $\RCAO $, two terms $s, u$ can be extracted such that $\RCAo$ proves:
\be\label{frood}
(\forall \mu^{2})\big[\textsf{\MU}(\mu)\di \UDNR(s(\mu)) \big] \wedge (\forall \Psi^{1\di 1})\big[ \UDNR(\Psi)\di  \MU(u(\Psi, \Phi))  \big],
\ee
where $\Phi$ is an extensionality functional for $\Psi$
%Hence, we have the following equivalence over $\RCAO$.
%\be\label{oewa}
% \MCT_{\ns}\asa (\exists^{2})\asa (\exists t)\MCT_{\ef}.
%\ee
\end{thm}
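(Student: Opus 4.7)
The plan is to apply the term extraction result, Corollary~\ref{consresultcor2}, to each of the two directions of the nonstandard equivalence $\UDNR^{+}\asa\paai$ established in Theorem~\ref{UDNRPLUS}. Since that proof is carried out entirely in $\RCAO=\P_{0}+\QFAC^{1,0}$, the hypothesis \eqref{antecedn} of the corollary is already met, and the substantive work is to cast both sides of the equivalence into the normal form $(\forall^{\st}\underline{x})(\exists^{\st}\underline{y})\psi$ required there, and then to read off what the extracted terms compute.

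First I would observe that $\paai$ is already in normal form $(\forall^{\st}f^{1})(\exists^{\st}n^{0})\psi_{\paai}(f,n)$ with internal $\psi_{\paai}(f,n)\equiv[f(n)\ne 0\vee(\forall m)f(m)=0]$, and that its natural realizer is Feferman's $\mu$: given $\mu$, a witness for $n$ is obtained by applying $\mu$ to (a coding of) the characteristic function of $f(x)\ne 0$. For $\UDNR^{+}(\Psi)$, I would unfold $\approx_{1}$ and push the standard quantifiers outward classically to obtain the normal form
\[
(\forall^{\st}A^{1},C^{1},D^{1},k^{0})(\exists^{\st}n^{0})\big[(\forall e^{0})(\Psi(A)(e)\ne\Phi_{e}^{A}(e))\wedge(C(n)\ne D(n)\vee\Psi(C)(k)=\Psi(D)(k))\big].
\]
Since the first conjunct is independent of $n$, the Skolem witness $n=n(A,C,D,k)$ can be chosen to depend only on $(C,D,k)$, in which case it is precisely an extensionality functional for $\Psi$ in the sense of \eqref{turki}. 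Hence the realizer of $(\exists^{\st}\Psi)\UDNR^{+}(\Psi)$ is the pair $(\Psi,\Phi)$ with $\Phi$ such an extensionality functional.

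For the direction $\paai\di\UDNR^{+}$, I would rewrite the implication classically as a single $(\forall^{\st})(\exists^{\st})$-statement and apply Corollary~\ref{consresultcor2}; the resulting term takes a (Herbrandized) realizer of $\paai$ as input---which in $\RCAo$ is supplied by any $\mu$ with $\MU(\mu)$---and outputs a pair realizing $\UDNR^{+}$, whose first component is the desired $s(\mu)$. Tracing the extraction through the construction \eqref{kukkkk}--\eqref{kukkkkl} makes the shape of $s$ transparent: $s(\mu)(A)(e)=\Theta(A,\zeta_{\mu}(A,e))(e)$ for a term $\zeta_{\mu}$ computing, from $\mu$, the stabilisation bound produced in the nonstandard proof by $\HAC_{\INT}$. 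Symmetrically, for $\UDNR^{+}\di\paai$, the antecedent's Skolem witnesses $(\Psi,\Phi)$ become the inputs to the extracted term $u$, whose output realizes $\paai$ and hence, via $\paai\leftrightarrow(\exists^{2})\leftrightarrow\MU$, yields $u(\Psi,\Phi)$ satisfying $\MU$. Following the contradiction argument around \eqref{kilopol}, the concrete $u$ computes $\mu(f)$ by using $\Phi(\emptyset,D,e_{1})$ to cap the search for a non-zero value of $f$ and then invoking $\UDNR$ on the empty oracle to extract it.

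The main obstacle I anticipate is not the term extraction itself---which is a mechanical application of the algorithm $\mathcal{A}$---but the normal-form bookkeeping. One must verify that the external standard-extensionality clause, which as stressed in Remark~\ref{equ} cannot be added as an axiom to $\P$, is translated by $S_{\st}$ into exactly the demand for an extra argument of type $1\di 1\di 0\di 0$ satisfying \eqref{turki}. This amounts to a careful pass through clauses (i)--(v) of \cite{brie}*{Def.\ 7.1}, essentially parallel to the verification already performed in the proof of Corollary~\ref{consresultcor}; once this is in place, both halves of \eqref{frood} follow at once.
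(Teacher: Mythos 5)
Your proposal is correct and takes essentially the same route as the paper: bring both directions of $\UDNR^{+}\asa\paai$ into normal form, introduce an extensionality functional via $\HAC_{\INT}$, and apply term extraction. The one place where you overstate slightly is in claiming that the pointwise Skolem witness $n(C,D,k)$ with $C(n)\ne D(n)\vee\Psi(C)(k)=\Psi(D)(k)$ is ``precisely'' an extensionality functional in the sense of \eqref{turki}: after extraction it is a finite sequence of candidate points, and one must first take its maximum (as the paper does explicitly in defining $\Xi$ from $\Phi$) and then also take a maximum over $j<k$ to pass from pointwise agreement $\Psi(C)(k)=\Psi(D)(k)$ to the initial-segment form $\overline{\Psi(C)}k=\overline{\Psi(D)}k$ that \eqref{turki} actually demands.
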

\begin{proof}
We prove the second conjunct in \eqref{frood};  The first conjunct is analogous.  We first show that $\UDNR^{+}\di \paai$ can be brought in the normal form from Corollary \ref{consresultcor}.  
First of all, note that $\paai$ is easily brought into the form:
\be\label{dirf}
(\forall^{\st} f^{1})(\exists^{\st} y^{0})\big[(\exists x^{0})f(x)=0 \di (\exists z^{0}\leq y)f(z)=0\big].
\ee
In $\UDNR^{+}$, resolve the predicates `$\approx_{1}$' in the second conjunct to obtain:
\[
 (\forall^{\st}X^{1}, Y^{1}, k^{0})(\exists^{\st}N^{0})(\overline{X}N=_{0}\overline{Y}N\di \overline{\Psi(X)}k=_{0} \overline{\Psi(Y)}k).
\]
Apply $\HAC_{\INT}$ to obtain standard $\Phi$ such that $(\exists N\in \Phi(X, Y, k))$.  Define $\Xi(X, Y, k)$ as $\max_{i<|\Phi(X, Y, k)|}\Phi(X, Y, k)(i)$ to obtain  
\[
 (\exists^{\st}\Xi)(\forall^{\st}X^{1}, Y^{1}, k^{0})\big[\overline{X}\Xi(X, Y, k)=_{0}\overline{Y}\Xi(X, Y, k)\di \overline{\Psi(X)}k=_{0} \overline{\Psi(Y)}k\big].
\]
Let $B(\Xi, X, Y, k)$ be the formula in square brackets in the previous formula and let $C(f, y)$ be the formula in square brackets in \eqref{dirf}.  So far, we have derived
\[
\big[(\exists^{\st}\Psi)(\forall^{\st}Z^{1})A(Z, \Psi)\wedge  (\exists^{\st}\Xi)(\forall^{\st}X^{1}, Y^{1}, k^{0})B(\Xi, X, Y, k)\big] \di (\forall^{\st}f^{1})(\exists^{\st}y^{0})C(f, y),
\]  
from $\UDNR^{+}\di \paai$ in $\RCA_{0}^{\Lambda}$, where $A(Z, \Psi)\equiv (\forall e^{0})(\Psi(Z)(e)\ne \Phi_{e}^{Z}(e))$.  By bringing outside all the standard quantifiers, we obtain  
\be\label{dth}
(\forall^{\st}f, \Psi, \Xi)(\exists^{\st}y^{0}, X^{1},Y^{1}, Z^{1}, k )\big[[A(Z, \Psi)\wedge B(\Xi, X, Y, k)] \di C(f, y)\big],
\ee
where the formula in square brackets is internal.  %We now crucially observe that for $D$ the sentence \eqref{dth}, we have $(D)^{S_{\st}}\equiv D$ by .      
Thanks to Corollary~\ref{consresultcor}, the term extraction algorithm $\mathcal{A}$ applied to `$\RCAO\vdash \eqref{dth}$', provides a term $t$ such that
\[
(\forall f, \Psi, \Xi)(\exists (y^{0}, X^{1},Y^{1}, Z^{1}, k)\in t(f, \Psi, \Xi) )\big[[A(Z, \Psi)\wedge B(\Xi, X, Y, k)] \di C(f, y)\big]
\]  
is provable in $\RCAo$.  
Now let $s$ be the term $t$ with all entries not pertaining to $y$ omitted;  We have
\[
(\forall f, \Psi, \Xi)(\exists k^{0}, X^{1},Y^{1}, Z^{1})(\exists y\in s(f, \Psi, \Xi) )\big[[A(Z, \Psi)\wedge B(\Xi, X, Y, k)] \di C(f, y)\big].  
\]  
Now define $u(f, \Psi, \Xi)$ as the maximum of all entries of $s(f, \Psi, \Xi)$;  We have
\[
(\forall f, \Psi, \Xi)(\exists k^{0}, X^{1},Y^{1}, Z^{1})\big[[A(Z, \Psi)\wedge B(\Xi, X, Y, k)] \di C(f, u(f, \Psi, \Xi))\big].  
\]  
Bringing all quantifiers inside again as far as possible, we obtain
\[
(\forall  \Psi, \Xi)\big[[(\forall Z^{1})A(Z, \Psi)\wedge (\forall k^{0}, X^{1},Y^{1})B(\Xi, X, Y, k)] \di (\forall f)C(f, u(f, \Psi, \Xi))\big].  
\]  
Hence, if $\Psi_{0}$ is as in $\UDNR$ and $\Xi_{0}$ witnesses the extensionality of $\Psi_{0}$, then $u(\cdot, \Psi_{0}, \Xi_{0})$ is Feferman's my-operator, and we are done.    
\end{proof}
\begin{cor}\label{essek}
In $\RCAo$, we have the explicit\footnote{Since the system $\RCAo$ includes the axiom of extensionality \eqref{EXT} and $\QFAC^{1,0}$, and in 
light of the elementary nature (an unbounded search) of an extensionality functional $\Xi$, we will still call $t(\Xi, \cdot)$ `explicit' if $t$ is a term from the language.  } equivalence $\UDNR\asa (\mu^{2})$.
\end{cor}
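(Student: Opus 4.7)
The plan is to read the two explicit implications directly out of Theorem~\ref{sef} and to handle the asymmetry introduced by the extensionality functional appearing in the second conjunct of~\eqref{frood}.

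For the direction $(\mu^{2})\di \UDNR$, the first conjunct of \eqref{frood} already does all the work: given any $\mu^{2}$ satisfying $\MU(\mu)$, the term $s(\mu)$ is a functional $\Psi$ witnessing $\UDNR(\Psi)$, and this is explicit in $\mu$ by construction of $s$.

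For the reverse direction $\UDNR \di (\mu^{2})$, the second conjunct of \eqref{frood} yields $\MU(u(\Psi,\Phi))$ whenever $\UDNR(\Psi)$ holds and $\Phi$ is an extensionality functional for $\Psi$ in the sense of \eqref{turki}. The task therefore reduces to producing such a $\Phi$ in $\RCAo$ from $\Psi$ alone, using only the ambient axiom of extensionality \eqref{EXT} and $\QFAC^{1,0}$. The plan is an unbounded search: for each triple $(f^{1},g^{1},k^{0})$, one verifies that there exists $N^{0}$ such that
\[
\overline{f}N=_{0}\overline{g}N \di \overline{\Psi(f)}k =_{0}\overline{\Psi(g)}k.
\]
Classically, either $f=_{1}g$, in which case \eqref{EXT} forces $\Psi(f)=_{1}\Psi(g)$ and $N=0$ works; or $f\ne_{1}g$, in which case some $N$ already separates $\overline{f}N$ from $\overline{g}N$ and the implication is vacuous. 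The body is quantifier-free, so an application of $\QFAC^{1,0}$ (after coding the triple as a single function input) yields the desired functional $\Phi$, and then $u(\Psi,\Phi)$ is Feferman's $\mu$-operator.

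The main obstacle, and the point the footnote is drawing attention to, is the status of this $\Phi$: in higher-order $\RCAo$ the pointwise axiom \eqref{EXT} does not by itself hand us a uniform modulus witness, and the construction above is genuinely an unbounded search rather than a closed term of $\T^{*}$. This is why the equivalence is called explicit only modulo such an extensionality functional — once $\Phi$ is in hand, the passage $\Psi \mapsto u(\Psi,\Phi)$ is a literal term of the language, just as $\mu \mapsto s(\mu)$ is in the converse direction.
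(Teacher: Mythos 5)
Your proposal is correct and follows the paper's intended route: the paper states Corollary~\ref{essek} as an immediate consequence of Theorem~\ref{sef}, with the footnote signalling exactly the point you develop, namely that the extensionality functional $\Phi$ for $\Psi$ is produced from \eqref{EXT} and $\QFAC^{1,0}$ by an unbounded search, so the modified notion of ``explicit'' applies. Your case-split justification (if $f=_{1}g$ then \eqref{EXT} gives the consequent with $N=0$; otherwise some $N$ falsifies the antecedent) is precisely the argument needed to license the $\QFAC^{1,0}$ application and matches the paper's understanding.
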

Clearly, there is a general strategy to obtain the normal form as in \eqref{dth} for principles similar to $\UDNR^{+}$, as discussed in the following remark.  
\begin{rem}[Algorithm $\mathcal{B}$]\label{algob}\rm
Let $T\equiv (\forall X^{1})(\exists Y^{1})\varphi(X, Y)$ be an internal formula and define the `strong' uniform version $UT^{+}$ as 
\[
(\exists^{\st} \Phi^{1\di 1})\big[(\forall^{\st} X^{1})\varphi(X,\Phi(X)) \wedge \textup{$\Phi$ is standard extensional}\big].
\]
The proof of Theorem \ref{sef} provides a \emph{normal form algorithm} $\mathcal{B}$ to convert the implication $UT^{+}\di \paai$ into the normal form $(\forall^{\st}x)(\exists^{\st}y)\varphi(x, y)$ as in \eqref{dth}. 
\end{rem}
%Hence, we obtain the following corollary, for which the axiom of extensionality is essential.  
In general, if $ T\di \DNR$ and the proof of the implication is sufficiently uniform, then $UT\di (\exists^{2})$ and this implication is explicit.  We now list some examples.  %of such theorems $T$.   
\begin{rem}[Immediate consequences]\rm
First of all, let DNR$_{k}$ be DNR where the function $f^{1}$ satisfies $f\leq_{1}k$, and let UDNR$_{k}$ be UDNR with the same restriction on $\Psi(A)$.  
Clearly, for any $k\geq 1$, we have the explicit equivalence $\UDNR_{k}\asa (\exists^{2})$.

\medskip

Secondly, let \textsf{RKL} be the `Ramsey type' version of $\WKL$ from \cite{stoptheflood} and let \textsf{URKL} be its obvious uniform version.  
In $\RCAo$, we have the explicit equivalence $\textup{\textsf{URKL}}\asa (\exists^{2})$, as it seems the proof of \textsf{RKL} $\di \DNR$ from \cite{stoptheflood}*{Theorem 8} can be uniformized.  
Indeed, in this proof, \textsf{RKL} is applied to a specific tree $T_{0}$ from \cite{stoptheflood}*{Lemma 7} to obtain a certain set $H$.   Then the function $g$ is defined such that $W_{g(e)}$ is the least $e+3$ elements of $H$.  
This function $g$ is then shown to be fixed-point free, which means it gives rise to a \DNR-function by \cite{zweer}*{V.5.8, p.\ 90}.  
Noting that the tree $T_{0}$ has positive measure, we even obtain \textsf{WRKL} $\di \DNR$ (and the associated uniform equivalence to $(\exists^{2})$), where the tree has positive measure in the latter (See \cite{paul1}).  

\medskip

Thirdly, let \textsf{SEM} be the \emph{stable} Erd\"os-Moser theorem from \cite{lemans}.  In \cite{patey1}*{Theorem~3.11}, the implication \textsf{SEM} $\di \DNR$ is proved, and the proof is clearly uniform.    
Hence, for \textsf{USEM} the uniform version of \textsf{SEM}, we have (explicitly) that \textsf{USEM} $\asa (\exists^{2})$.  The same obviously holds for \textsf{EM}, the version of \textsf{SEM} without stability.  
\end{rem}
In the next section, we shall study principles from the zoo for which a `uniformising' proof as in the previous remark is not immediately available.  
We finish this section with a remark on the Reverse Mathematics zoo.  % (\cite{damirzoo}).
\begin{rem}[A higher-order zoo]\label{dazoo}\rm
Since $\DNR$ is rather `low' in the zoo, it is to be expected that uniform versions of `most' of the zoo's principles will behave as \UDNR, i.e.\ turn out equivalent to $(\exists^{2})$ (as we will establish below).  
In particular, since Friedman-Simpson style Reverse Mathematics is limited to second-order arithmetic, the proof of Theorem~\ref{UDNRPLUS} will go through for principles other than $\UDNR$ as the associated functionals can 
only have type $1\di 1$ (by the limitation to second-order arithmetic).  However, it is conceivable that uniform \emph{higher-type} principles, to which the proof of Theorem~\ref{UDNRPLUS} does not apply, will populate a `higher-order' RM zoo.
%We discuss a possible example of such an example in Remark \ref{nadar}.  
\end{rem}

\section{Classifying the Reverse Mathematics zoo}\label{czoo}
In this section, we classify uniform versions of a number principles from the RM zoo, based on the results in the previous section.  
After these case studies, we shall formulate in Section \ref{tempie} a template which seems sufficiently general to apply 
to virtually any (past or future) principle from the RM zoo.  

\subsection{Ascending and descending sequences}\label{padis}
In this section, we study the uniform version of the ascending-descending principle $\ADS$ (See e.g.\ \cite{dslice}*{Def.~9.1}).  
\bdefi
For a linear order $\preceq$, a sequence $x_{n}^{1}$ is \emph{ascending} if $x_{0}\prec x_{1}\prec \dots$ and \emph{descending} if $x_{0}\succ x_{1}\succ \dots$.
\edefi
\begin{defi}[\ADS]
Every infinite linear ordering has an ascending or a descending sequence.
\edefi
Recall that $\textup{LO}(X^{1})$ is short for `$X^{1}$ is a linear order'; We append `$\infty$' to `LO' to stress that $X$ is an infinite\footnote{Here, `infinite' should not be confused with the notation `$M^{0}$ is infinite' for $\neg\st(M)$; Note the type mismatch between numbers and orders.} linear order, meaning that its field is not bounded by any number (See \cite{simpson2}*{V.1.1}).  
With this in place, uniform $\ADS$ is as follows:
\begin{defi}[\UADS]
\begin{align}
(\exists \Psi^{1\di 1})(\forall X^{1})\big[\textup{LO}_{\infty}(X)& \di (\forall n^{0})\Psi(X)(n)<_{X} \Psi(X)(n+1)\notag \\ 
&\vee (\forall m^{0})\Psi(X)(m)>_{X} \Psi(X)(m+1)\big].\label{dardenne}
\end{align}
\edefi
Note that we can decide which case of the disjunction of $\UADS$ holds by testing $\Psi(X)(0)<_{X}\Psi(X)(1)$.  We have the following theorem.  
\begin{thm}\label{JUDAS}
In $\RCAo$, we have the explicit equivalence $\UADS\asa(\mu^{2})$. 
\end{thm}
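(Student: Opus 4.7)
The plan is to imitate the development for $\UDNR$ in Theorems~\ref{UDNRPLUS} and~\ref{sef}. Namely, introduce the strong nonstandard variant
\[
\UADS^{+}\;:\equiv\;(\exists^{\st}\Psi^{1\di 1})\big[(\forall^{\st}X^{1})[\textup{LO}_{\infty}(X)\di \text{the bracketed formula of \eqref{dardenne}}] \wedge \Psi \text{ standard extensional}\big],
\]
and establish the nonstandard equivalence $\UADS^{+}\asa \paai$ in $\RCAO$. Once this is in place, the explicit equivalence $\UADS\asa(\mu^{2})$ in $\RCAo$ falls out by the normal form algorithm $\mathcal{B}$ of Remark~\ref{algob} together with the term extraction Corollary~\ref{consresultcor}, exactly as Theorem~\ref{sef} was derived from Theorem~\ref{UDNRPLUS}.

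For the forward direction $\paai \di \UADS^{+}$, I would fix a standard $X^{1}$ with $\textup{LO}_{\infty}(X)$ and build $\Psi(X)$ by nonstandard overspill. Using $\paai$ one can decide, for each element $a$ of the field of $X$, the $\Sigma_{2}^{0}$-property \emph{``$a$ has cofinally many $X$-successors''} (call such $a$ \emph{large}, else \emph{small}). Since $X$ is infinite, either infinitely many standard elements are large, in which case a greedy search produces an infinite ascending sequence of large elements, or infinitely many are small, in which case a greedy $X$-predecessor search produces a descending one; both cases can be merged into a single finite approximation $\Theta(X,M)(n)$ which runs the search for $M$ stages, in the style of \eqref{kukkkk}. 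The value of $\Theta(X,M)(n)$ stabilises for standard $n$ as $M$ ranges over $\Omega$, exactly by the reasoning of \eqref{frik2}--\eqref{kukkkkl}, so $\HAC_{\INT}$ extracts the desired standard $\Psi$. Standard extensionality of $\Psi$ follows because $X\approx_{1}Y$ at standard indices implies identical largeness verdicts on the standard field (by $\paai$), and hence identical greedy output at standard indices.

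For the converse $\UADS^{+}\di \paai$, the cleanest route is to note that the standard reverse-mathematical proof $\ADS \di \DNR$ (see e.g.~the discussion in \cite{dslice}) is sufficiently uniform: to each $A^{1}$ one assigns a computable linear order $L_{A}$ whose infinite ascending or descending sequences uniformly compute a $\DNR_{A}$-function. Given $\Psi$ witnessing $\UADS^{+}$, the functional $A\mapsto \Psi(L_{A})$ post-composed with that extraction therefore yields a standard-extensional witness for $\UDNR^{+}$ (since $A\approx_{1}B$ implies $L_{A}\approx_{1}L_{B}$ by inspection), and Theorem~\ref{UDNRPLUS} then gives $\paai$. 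With both implications in place, I would run the argument of Theorem~\ref{sef}: bring $\UADS^{+}\di\paai$ into the normal form $(\forall^{\st}\cdots)(\exists^{\st}\cdots)\varphi_{0}$ of \eqref{dth} by resolving $\approx_{1}$ and applying $\HAC_{\INT}$ to the standard extensionality clause, apply algorithm $\mathcal{A}$ to the $\RCAO$-proof, and read off the term $u(\Psi,\Xi)$ converting any witness $\Psi$ of $\UADS$ (with extensionality functional $\Xi$) into Feferman's $\mu^{2}$; the other direction produces the required term $s(\mu)$ symmetrically.

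The main obstacle I expect is the verification that the classical $\ADS\di\DNR$ construction respects standard extensionality of the map $A\mapsto L_{A}$, i.e.\ that all the ad hoc decisions in $L_{A}$ are driven only by \emph{bounded} information about $A$, so that $A\approx_{1}B$ really does force $L_{A}\approx_{1}L_{B}$; this is the non-routine input that the template of Remark~\ref{algob} and the term-extraction step in Theorem~\ref{sef} otherwise handle mechanically.
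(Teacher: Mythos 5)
Your overall framework (introduce $\UADS^{+}$, prove the equivalence with $\paai$ in $\RCAO$, then extract via $\mathcal{B}$ and $\mathcal{A}$) is exactly the paper's template, but the crucial direction $\UADS^{+}\di\paai$ rests on a false premise: you invoke ``the standard reverse-mathematical proof $\ADS\di\DNR$,'' but no such implication exists. In fact $\ADS$ does \emph{not} imply $\DNR$ over $\RCA_{0}$ (it is strictly below $\CAC$, and lies on a branch of the zoo incomparable with $\DNR$); this is precisely why the paper cannot reuse the $\UDNR$-machinery and instead gives a \emph{direct} counterexample construction. There is no uniform $A\mapsto L_{A}$ whose ascending/descending sequences compute $\DNR_{A}$-functions, so the reduction to Theorem~\ref{UDNRPLUS} does not get off the ground, and the ``main obstacle'' you flag (preserving standard extensionality of $A\mapsto L_{A}$) is masking the deeper problem that the underlying non-uniform implication is itself not a theorem.

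The paper's proof of $\UADS^{+}\di\paai$ is short and quite different: given a standard counterexample $h$ to $\paai$ with least $m_{0}$ where $h(m_{0})\neq 0$, one defines the \emph{standard} linear order $\prec$ by
\[
\dots \prec m_{0}+2 \prec m_{0}+1 \prec 0 \prec 1 \prec 2 \prec \dots \prec m_{0},
\]
expressible from $h$ by a quantifier-free formula. Then $\prec\;\approx_{1}\;<_{0}$, so standard extensionality forces $\Psi(\prec)\approx_{1}\Psi(<_{0})$; but $<_{0}$ has only ascending infinite sequences while $\prec$ has only descending ones, so the two outputs must land in different disjuncts of \eqref{dardenne}, which is incompatible with $\approx_{1}$. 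This single order construction is the whole content of the nonstandard contradiction, and it is tailored to linear orders rather than piggybacking on $\DNR$. Your forward direction is also more convoluted than needed (the paper simply notes that $(\mu^{2})\di\UADS$ is straightforward since everything is arithmetical); moreover deciding the $\Sigma_{2}^{0}$ ``cofinally many successors'' predicate is not something $\paai$ alone licenses, since $\paai$ is Transfer only for $\Pi_{1}^{0}$-formulas.
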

\begin{proof}
Since all notions involved are arithmetical, the explicit implication $(\mu^{2})\di \UADS$ is straightforward.  
For the remaining explicit implication, we will prove $\UADS^{+}\di  \paai$, where the former is 
\[
(\exists^{\st}\Psi^{1\di 1})\big[(\forall^{\st} X^{1})A(X, \Psi)\wedge (\forall^{\st} X^{1}, Y^{1})\big(X\approx_{1} Y \di \Psi(X)\approx_{1}\Psi(Y) \big)\big],
\]
where $A(X, \Psi)$ is the formula in square brackets in \eqref{dardenne}.  It is then easy to bring the implication $\UADS^{+}\di \paai$ in the normal form as in \eqref{dth} using the algorithm $\mathcal{B}$ from Remark \ref{algob}.  
Applying the term extraction algorithm $\mathcal{A}$ using Corollary \ref{consresultcor} then establishes the explicit implication $\UADS\di (\mu^{2})$.       

\medskip

Thus, assume $\UADS^{+}$ and suppose $\paai$ is false, i.e.\ there is a standard $h^{1}$ such that $(\forall^{\st}n)h(n)=0$ and $(\exists m)h(m)\ne0$.  Now let $m_{0}$ be the least number such that $h(m_{0})\ne 0$ and define the ordering `$\prec$' as follows:
\be\label{heneriek}
\dots \prec m_{0}+2 \prec m_{0}+1\prec 0\prec 1\prec 2\prec \dots \prec m_{0}.
\ee
It is straightforward\footnote{The order $\prec$ from \eqref{heneriek} can be defined as:  $i\prec j$ holds if $i<j\wedge (\forall k\leq j-1)h(k)=0$ or $i>j\wedge (\exists k\leq j-1)h(k)\ne0$ or $i>j\wedge (\exists k\leq j-1)h(k)\ne0\wedge (\forall k\leq i-1)h(k)=0$.\label{lootnote}} to define the \emph{standard} ordering $\prec$ using the function $h$.  Now consider the usual strict ordering $<_{0}$ and note that $(\prec)~ \approx_{1}~(<_{0})$ (with some abuse of notation in light of \cite{simpson2}*{V.1.1}).  
By the standardness of $\prec$ and standard extensionality for the standard $\Psi$ functional from $\UADS^{+}$, we have $\Psi(\prec)\approx_{1}\Psi(<_{0})$ (again with some abuse of notation).  However, this leads to a contradiction as $<_{0}$ only has 
ascending infinite sequences, while $\prec$ only has descending infinite sequences.  Indeed, while only the first case in \eqref{dardenne} can hold for $\Psi(<_{0})$, only the second case can hold for $\Psi(\prec)$.  But then $\Psi(\prec)\approx_{1}\Psi(<_{0})$ is impossible.  
This contradiction guarantees that $\UADS^{+} \di \paai$, and we are done.    % the consequent is equivalent to $(\exists^{2})$ by \cite{bennosam}*{Cor.\ 12}.  
\end{proof}
In \cite{schirfeld}*{Prop.\ 3.7}, it is proved that $\ADS$ is equivalent to the principle \textsf{CCAC}.  In light of the uniformity of the associated proof, the uniform version of the latter is also equivalent to $(\exists^{2})$.  
Furthermore, the equivalence in the previous theorem translates into a result in \emph{constructive} Reverse Mathematics (See \cite{ishi1}) as follows.
\begin{rem}[Constructive Reverse Mathematics]\label{CRM}\rm
The ordering $\prec$ defined in \eqref{heneriek} yields a proof that $\ADS\di \Pi_{1}^{0}\textup{-LEM}$ over the (constructive) base theory from \cite{ishi1}.  Indeed, for a function $h^{1}$, define the ordering $\prec_{h}$ from Footnote \ref{lootnote}.  
By \ADS, there is a sequence $x_{n}$ which is either ascending or descending in $\prec_{h}$.  It is now easy to check that if $x_{0}\prec_{h} x_{1}$, then $(\forall n)h(n)=0$, and if $x_{0}\succ_{h} x_{1}$ then $\neg [(\forall n)h(n)=0]$.        
Hence, ADS provides a way to decide whether a $\Pi_{1}^{0}$-formula holds or not, i.e.\ the law of excluded middle limited to $\Pi_{1}^{0}$-formulas.  
\end{rem}
Next, we consider a special case of \ADS.    
The notion of \emph{discrete} and \emph{stable} linear orders from \cite{dslice}*{Def.\ 9.15} is defined as follows.  
\bdefi[{Discrete and stable orders}]\label{DISQUE}
A linear order is \emph{discrete} if every element has an immediate predecessor, except for the first element of the order if there is one, and every element has an immediate successor, except for the last element of the order if there is one. 
A linear order is \emph{stable} if it is discrete and has more than one element, and every element has either finitely many predecessors or finitely many successors. (Note that a stable order must be infinite.)
\edefi
Again, to be absolutely clear, the notion of `finite' and `infinite' in the previous definition constitutes the `usual' \emph{internal} definitions of infinite orders in $\RCAo$ and have nothing to do with our notation `$M$ is infinite' for $\neg\st(M^{0})$.  
In particular, note the type mismatch between orders and numbers.  

\medskip

Now denote by \textsf{SADS} the principle $\ADS$ limited to stable linear orderings, and let \textsf{USADS} be its uniform version.
%$$^{} an (infinite) order in which each element has either finitely successors or finitely many predecessors.  
%Let us denote $SO(X)$
\begin{cor}\label{thisone}
In $\RCAo$, we have the explicit equivalence $\textup{\textsf{USADS}}\asa(\mu^{2})$,  %$\asa \textup{UCADS}$. 
\end{cor}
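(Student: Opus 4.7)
The plan is to mirror the proof of Theorem~\ref{JUDAS} almost verbatim, the one extra ingredient being a verification that the standard ordering $\prec_{h}$ from footnote~\ref{lootnote} qualifies as a \emph{stable} linear order in the sense of Definition~\ref{DISQUE}, so that the restricted principle $\textup{\textsf{USADS}}$ actually applies to it.

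First I would dispose of the explicit implication $(\mu^{2})\di \textup{\textsf{USADS}}$ by noting that the functional produced from $\mu^{2}$ for $\UADS$ (in Theorem~\ref{JUDAS}) already handles stable orders as a special case. For the non-trivial direction, I would formulate $\textup{\textsf{USADS}}^{+}$ in strict analogy with $\UADS^{+}$, namely as the existence of a standard, standard-extensional functional $\Psi$ witnessing $\textup{\textsf{USADS}}$ on all standard stable orders. Then apply algorithm~$\mathcal{B}$ from Remark~\ref{algob} to bring the implication $\textup{\textsf{USADS}}^{+}\di \paai$ into the normal form \eqref{dth}, and extract the desired term witnessing $(\mu^{2})$ by Corollary~\ref{consresultcor}, yielding the explicit equivalence.

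The substantive step is thus the nonstandard implication $\textup{\textsf{USADS}}^{+}\di \paai$. Following Theorem~\ref{JUDAS}, assume $\textup{\textsf{USADS}}^{+}$ and suppose $\paai$ fails, yielding standard $h^{1}$ with $(\forall^{\st}n)h(n)=0$ but $(\exists m)h(m)\ne 0$; let $m_{0}$ be the least such $m$ (necessarily nonstandard). Reuse the standard ordering $\prec_{h}$ from footnote~\ref{lootnote}, which internally has the shape of \eqref{heneriek}. The key new check is that $\prec_{h}$ is stable: discreteness is immediate from \eqref{heneriek} since each element has an immediate predecessor and an immediate successor, with $m_{0}$ the sole maximum (permitted) and no minimum; for the stability clause, each element $i\leq m_{0}$ has exactly $m_{0}-i$ successors, while each $m_{0}+k$ with $k\geq 1$ has $k+m_{0}$ successors, so every element has finitely many successors internally. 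With $\prec_{h}$ a bona fide stable order, $\textup{\textsf{USADS}}^{+}$ delivers $\Psi(\prec_{h})$, either ascending or descending in $\prec_{h}$. Since $\prec_{h}\approx_{1}(<_{0})$ (they agree on every standard input pair because $h$ vanishes on standard numbers), standard extensionality of $\Psi$ gives $\Psi(\prec_{h})\approx_{1}\Psi(<_{0})$; but $<_{0}$ admits only ascending infinite sequences while $\prec_{h}$ (owing to its maximum $m_{0}$) admits only descending ones, contradicting the approximate equality.

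The main obstacle, if any, is formalising the stability check inside $\RCAo$ rather than at the pictorial level of \eqref{heneriek}; this reduces to writing down, from $h$ and $m_{0}$, an explicit finite bound on the successor set of each element, which is routine. Everything else transfers mechanically from Theorem~\ref{JUDAS}, including the identification of the term $u(\cdot,\Psi,\Xi)$ produced by algorithm~$\mathcal{A}$ with Feferman's $\mu$-operator.
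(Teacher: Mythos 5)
Your proposal is correct and takes essentially the same route as the paper: the paper's proof of this corollary is the one-line observation that both $<_{0}$ and $\prec$ from the proof of Theorem~\ref{JUDAS} are stable, so that proof already gives $\textsf{USADS}^{+}\di\paai$. You simply spell out the stability check (finitely many successors for every element of $\prec$, discreteness from the explicit immediate-successor/predecessor structure) that the paper leaves implicit.
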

\begin{proof}
Note that both the orderings $<_{0}$ and $\prec$ defined in the proof of the theorem are stable and this proof thus also yields $\textsf{USADS}^{+}\di \paai$.  
%For the second equivalence, we only need to prove the reverse implication which follows as in the theorem.  Indeed, for $\Psi$ the functional as in UCADS, again consider the orderings $<_{0}$ and $\prec$ and note that $\Psi(<_{0})\approx_{1}\Psi(\prec)$ again leads to a contradiction (as stable orderings are infinite).        
\end{proof}
Let $\textup{\textsf{SRT}}_{2}^{2}$ be Ramsey's theorem for pairs limited to stable colourings (See e.g.\ \cite{dslice}*{Def.\ 6.28}), and let \textsf{USRT}$_{2}^{2}$ be its uniform version
where a functional $\Psi^{1\di 1}$ takes as input a stable 2-colouring of pairs of natural numbers and outputs an infinite homogeneous set.
\begin{cor}  
In $\RCAo$, we have the explicit equivalence $\textup{\textsf{USRT}}_{2}^{2}\asa(\mu^{2})$. 
\end{cor}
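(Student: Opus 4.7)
My plan is to leverage Corollary \ref{thisone} by exhibiting an explicit reduction $\textup{\textsf{USRT}}_{2}^{2}\di\textup{\textsf{USADS}}$ via the textbook coding of a stable linear order as a stable colouring of pairs. The reverse direction $(\mu^{2})\di\textup{\textsf{USRT}}_{2}^{2}$ is arithmetical and straightforward: using $\mu^{2}$, compute the eventual colour $\ell(x)=\lim_{y\to\infty}c(x,y)$ for each $x$ in a stable colouring $c$, pigeon-hole an infinite $\ell$-monochromatic set $S$, and greedily thin $S$ into a homogeneous set, with the whole construction packaged into a term $s(\mu)$ witnessing $\textsf{MU}(\mu)\di\textup{\textsf{USRT}}_{2}^{2}(s(\mu))$.

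For the forward direction, given a stable linear order $\prec$ as input, I will define $c_{\prec}(\{x,y\}):=0$ if $x\prec y$ and $1$ otherwise (taking $x<y$ in the natural order). Then $c_{\prec}$ is a stable colouring (the stability of $\prec$ guarantees that each $x$ has either finitely many $\prec$-predecessors or finitely many $\prec$-successors among the natural-order tail above $x$, so $c_{\prec}(\{x,\cdot\})$ is eventually constant), and an infinite colour-$0$ (resp.\ colour-$1$) homogeneous set enumerates a $\prec$-ascending (resp.\ $\prec$-descending) sequence. Hence if $\Psi$ realises $\textup{\textsf{USRT}}_{2}^{2}$, then $\Psi'(\prec):=\Psi(c_{\prec})$ realises $\textup{\textsf{USADS}}$, the disjunction in \eqref{dardenne} being decided by comparing the first two outputs of $\Psi(c_{\prec})$. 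Composing this explicit term with the explicit $\textup{\textsf{USADS}}\asa(\mu^{2})$ of Corollary \ref{thisone} then closes the circle.

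Alternatively, one can mirror Corollary \ref{thisone}'s nonstandard argument directly: introduce $\textup{\textsf{USRT}}_{2}^{2,+}$ with standard extensionality as in Remark \ref{algob}, reuse the specific order $\prec$ from \eqref{heneriek} with least exception $m_{0}$, and compare its induced colouring $c_{\prec}$ with the constant-$0$ colouring $c_{0}$. Both are stable, and for standard $x<y$ one has $x,y<m_{0}$ and hence $x\prec y$, giving $c_{\prec}\approx_{1}c_{0}$; standard extensionality of $\Psi$ then forces $\Psi(c_{\prec})\approx_{1}\Psi(c_{0})$. But $\Psi(c_{0})$ is trivially colour-$0$-homogeneous, while $\Psi(c_{\prec})$ must be colour-$1$-homogeneous: its enumeration is strictly increasing and unbounded, so contains two successive values both exceeding $m_{0}$, where $c_{\prec}$ takes value $1$ because $\prec$ reverses the natural order above $m_{0}$. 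Evaluating both enumerations at positions $0$ and $1$ produces standard $a_{0}<a_{1}$ with $c_{\prec}(a_{0},a_{1})=1$ by colour-$1$-homogeneity, contradicting $a_{0}\prec a_{1}$ which forces $c_{\prec}(a_{0},a_{1})=0$. This yields $\textup{\textsf{USRT}}_{2}^{2,+}\di\paai$, after which algorithms $\mathcal{B}$ and $\mathcal{A}$ (via Corollary \ref{consresultcor}) extract the explicit equivalence $\textup{\textsf{USRT}}_{2}^{2}\asa(\mu^{2})$.

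The main obstacle I anticipate is a bookkeeping one: verifying that $c_{\prec}$ is genuinely stable in the internal sense required for membership in the domain of the $\textup{\textsf{USRT}}_{2}^{2}$ functional (using the specific tail structure of $\prec$ from \eqref{heneriek}, where every element exceeding $m_{0}$ sits $\prec$-below $0$), and pinning down the representation convention for `infinite homogeneous set'---characteristic function versus increasing enumeration---so that the first two standard positions of the output can legitimately be plugged into $c_{\prec}$. Given that the reduction $\textup{\textsf{SRT}}_{2}^{2}\di\textup{\textsf{SADS}}$ is textbook and the template from Remark \ref{algob} is by now well-established, neither point should present serious difficulty.
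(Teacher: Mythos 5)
Your proposal is correct and matches the paper's approach: the paper derives $\textsf{USRT}_{2}^{2}\di\textsf{USADS}$ from the uniformity of Hirschfeldt--Shore's proof of $\textsf{SRT}_{2}^{2}\di\textsf{SADS}$ (Prop.\ 2.8 of that paper), which is precisely the stable-order-to-stable-colouring encoding you spell out, then invokes Corollary \ref{thisone}, and cites Sakamoto--Yamazaki for the reverse implication. You fill in details the paper leaves to those references, and your alternative direct nonstandard argument is just the general template specialised to the same $c_{\prec}$ encoding rather than a genuinely different route.
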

\begin{proof}
By \cite{schirfeld}*{Prop.\ 2.8}, we have $\textsf{SRT}_{2}^{2}\di\textsf{SADS}$ .  The proof of the latter is clearly uniform, yielding the forward implication by Corollary \ref{thisone}.  
By \cite{yamayamaharehare}*{Theorem 4.2}, the reverse implication follows.      
\end{proof}
%Obviously, the previous equivalence also follows for uniform Ramsey's theorem for pairs.  
We can prove similar results for \textsf{SRAM} and related principles from \cite{sram}, but do not go into details.  
Our next corollary deals with the \emph{chain-antichain principle}.
\bdefi[\textsf{CAC}]
Every infinite partial order $(P, \leq_{P} )$ has an infinite subset $S$ that is either a \emph{chain}, i.e.\ $(\forall x^{0},y^{0}\in S)(x\leq_{P}\vee y\leq_{P} x)$, or an \emph{antichain}, i.e.\ $(\forall x^{0},y^{0}\in S)(x\ne y\di x\not\leq_{P}\vee y\not\leq_{P} x)$.
\edefi
Let \textsf{UCAC} be the principle \textsf{CAC} with the addition of a functional $\Psi^{1\di 1}$ such that $\Psi(P, \leq_{P})$ is the infinite subset which is either a chain or antichain.  Let \textsf{USCAC} be \textsf{UCAC} limited to \emph{stable} partial orders (See \cite{schirfeld}*{Def.\ 3.2}).  
\begin{cor}
In $\RCAo$, we have the explicit equivalences $\UCAC\asa (\mu^{2})\asa \textup{\textsf{USCAC}}$.
\end{cor}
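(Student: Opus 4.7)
The proof follows the template established in Theorems~\ref{UDNRPLUS} and~\ref{JUDAS}. The forward direction $(\mu^{2})\di \UCAC$ (and a fortiori $(\mu^{2})\di\textsf{USCAC}$) is straightforward: in the presence of $\mu^{2}$, the usual construction of an infinite chain or antichain (as in the proof that $\ACA\di \CAC$) is arithmetical, so an explicit term in $\mu$ can be read off. For the reverse direction, I would follow the pattern of the previous sections: introduce the strengthenings $\UCAC^{+}$ and $\textsf{USCAC}^{+}$ requiring standard extensionality of the witnessing functional $\Psi$, apply the normal-form algorithm $\mathcal{B}$ from Remark~\ref{algob} to bring $\UCAC^{+}\di \paai$ (resp.\ the stable variant) into the shape of \eqref{dth}, and invoke Corollary~\ref{consresultcor} to extract explicit terms realizing $\UCAC\di (\mu^{2})$ and $\textsf{USCAC}\di (\mu^{2})$.

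The core is then the nonstandard implication $\UCAC^{+}\di \paai$, which I plan to prove by contradiction as in Theorem~\ref{JUDAS}. Assume $\paai$ fails and let standard $h^{1}$ satisfy $(\forall^{\st}n)h(n)=0$ while $(\exists m)h(m)\ne 0$; let $m_{0}$ be the least such $m$, necessarily nonstandard. On $\N$ consider two standard partial orders: let $P=(\N,\leq)$ be the usual total order, and let $Q=(\N,\leq_{Q})$ be defined by $i\leq_{Q}j$ iff $i=j$, or $i<j$ and $(\forall k\leq j)h(k)=0$. A direct verification shows that $\leq_{Q}$ is reflexive, antisymmetric, and transitive, and that $P$ and $Q$ are both stable in the sense of \cite{schirfeld}*{Def.\ 3.2}: in $P$ every element has only finitely many predecessors, while in $Q$ every element in $\{0,\dots,m_{0}-1\}$ is $\leq_{Q}$-comparable only with elements of that same initial segment, and every element $\geq m_{0}$ is comparable only with itself. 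For standard $i,j$ one has $(\forall k\leq j)h(k)=0$ (since every $k\leq j$ is standard by external induction), so $\leq_{P}$ and $\leq_{Q}$ agree on standard pairs, yielding $P\approx_{1}Q$ once the orderings are coded as type-$1$ objects.

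By standard extensionality of $\Psi$ we then have $\Psi(P)\approx_{1}\Psi(Q)$. Since $P$ is totally ordered, its only infinite homogeneous subsets are chains, so $\Psi(P)$ enumerates a chain in $P$. On the other hand, every chain in $Q$ is contained in the internally bounded set $\{0,\dots,m_{0}-1\}$ and is therefore internally finite, so $\Psi(Q)$ must enumerate an infinite antichain in $Q$. The definition of $\leq_{Q}$ forces every pair $i<j$ in an antichain to satisfy $j\geq m_{0}$; hence $\Psi(Q)(1)\geq m_{0}$. But the third clause of $\T^{*}_{\st}$ applied to the standard objects $\Psi$, $Q$ and $1$ yields that $\Psi(Q)(1)$ is standard, whereas $m_{0}$ is nonstandard, giving the desired contradiction. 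The same orderings serve for $\textsf{USCAC}^{+}\di \paai$ because $Q$ is stable. The main subtlety I expect is the bookkeeping around the coding of partial orders as type-$1$ objects and the chosen representation of ``infinite subset'' (I have tacitly used the strictly increasing enumeration, in line with Theorem~\ref{JUDAS}); whichever convention is fixed for $\UCAC$, the construction of $P$ and $Q$ above should be adapted accordingly.
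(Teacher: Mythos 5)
Your proof reaches the right conclusion, but by a genuinely different route from the paper's. The paper handles this corollary by reduction: it observes that the Hirschfeldt--Shore implications $\CAC\di\ADS$ and $\textsf{SCAC}\di\textsf{SADS}$ are uniformly provable, and then defers to Theorem~\ref{JUDAS} and Corollary~\ref{thisone} to conclude $\UCAC\di\UADS\di(\mu^{2})$ and $\textup{\textsf{USCAC}}\di\textup{\textsf{USADS}}\di(\mu^{2})$, with the remaining implications being straightforward. You instead rerun the template of Section~\ref{tempie} from scratch, building a single standard partial order $Q$ on $\N$ whose chains are internally finite and whose antichains contain at most one standard element. The reduction route is shorter and reuses work already done; your direct route is self-contained and does not rest on checking the uniformity of proofs in external literature.

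An interesting feature of your construction is that, for this particular $Q$, you do not in fact use standard extensionality of $\Psi$, nor the auxiliary order $P$, nor $\approx_{1}$: the contradiction already arises from $\Psi$ and $Q$ alone, via the third clause of $\T^{*}_{\st}$, since an infinite chain of $Q$ is internally impossible and an infinite antichain of $Q$ with two or more standard members is likewise impossible. This is a genuine simplification relative to both the paper's proof of this corollary and its general template; it is available here because, unlike in the $\UADS$ argument (where a descending chain in the order $\prec$ of \eqref{heneriek} can begin at small standard values before jumping past $m_{0}$), an antichain of $Q$ is ``pushed past'' $m_{0}$ already at its second element. The one loose end you flag yourself is real but routine: the paper defines $\UCAC$ so that $\Psi(P,\leq_{P})$ is a \emph{set}, not a strictly increasing enumeration, so one should first apply $\QFAC^{1,0}$ to the infinitude condition, exactly as in the proof of Theorem~\ref{dingdong}, to obtain a standard witnessing functional $\Xi$. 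Two applications of $\Xi$ then yield two distinct standard elements of $\Psi(Q)$, and the same contradiction follows for both $\UCAC$ and $\textup{\textsf{USCAC}}$.
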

\begin{proof}
In \cite{schirfeld}*{Prop.\ 3.1}, $\CAC\di \ADS$ is proved.  The proof is clearly uniform, implying the explicit implication $\UCAC\di \UADS$.  
By Theorem \ref{JUDAS}, we obtain the first forward implication in the theorem.  The first reverse implication is proved as in the final part of the proof of Theorem \ref{JUDAS}.  
For the final reverse implication, the implication \textsf{SCAC} $\di$ \textsf{SADS} is proved in \cite{schirfeld}*{Prop.\ 3.3}.  Since the latter proof is clearly uniform, we have (explicitly) that \textsf{USCAC} $\di (\exists^{2})$ by Corollary \ref{thisone}.    
The remaining implication is immediate.  
\end{proof}
Finally, we point out one important feature of the above proofs.  
\begin{rem}[Discontinuities]\label{dikko}\rm
We show that the construction \eqref{heneriek} which gives rise to $\UADS^{+}\di \paai$, also implies the existence of a discontinuity (in the sense of Nonstandard Analysis).  
Indeed, for infinite $M$, define $g^{1}\equiv00\dots 00100\dots$ where $g(M)=1$.  Let the (nonstandard) order $\vartriangleleft$ be the order $\prec$ as in \eqref{heneriek} but with $g$ instead of $h$.   
Then clearly $(<_{0})\approx_{1}(\vartriangleleft)\wedge \Psi(<_{0})\not\approx_{1}\Psi(\vartriangleleft)$ for $\Psi$ as in $\UADS$, i.e.\ this functional is not nonstandard continuity `around' $<_{0}$.  
A similar construction involving $g$ gives rise to a discontinuity around \emph{any} standard input.      
In conclusion, the functional $\Psi$ from $\UADS^{+}$ is `everywhere discontinuous' in the sense of Nonstandard Analysis.  
This observation applies to all the RM zoo principles discussed in this section.  
Thus, principles of the form $(\forall X^{1})(\exists Y^{1})\varphi(X, Y)$ from the RM zoo can be said to be `not continuous in their input parameter $X$'.  
\end{rem}

\subsection{Thin and free sets}
In this section, we study the so-called thin- and free set theorems from \cite{freesets}.  
In the latter, the thin set theorem $\textsf{TS}$ is defined as follows; $\textsf{TS}(k)$ is $\textsf{TS}$ limited to some fixed $k\geq 1$. 
\begin{princ}[$\textsf{TS}$]
$(\forall k)(\forall f:[\N]^{k}\di \N)(\exists A )(A\textup{ is infinite } \wedge f([A]^{k})\ne \N)$.
\end{princ}
We define $\textup{\textsf{UTS}}(2)$ as follows:
\be\tag{$\textup{\textsf{UTS}}(2)$}\label{ttwo}
(\exists \Psi^{1\di 1})(\forall f^{1}:[N]^{2}\di N)\big[\Psi(f) \textup{ is infinite } \wedge (\exists n^{0})\big[n\not\in f\big([\Psi(f)]^{2}\big)\big]  \big].
\ee  
We did not use `$\N$' to avoid confusion.  Recall that `$\Psi(f)$ is infinite' has nothing to do with infinite numbers $M\in \Omega$;  Note in particular the type mismatch.  
\begin{thm}
In $\RCAo$, we have the explicit equivalence $(\mu^{2})\asa \textup{\textsf{UTS}}(2)$.
\end{thm}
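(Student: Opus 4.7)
The proof follows the template of Theorems \ref{UDNRPLUS} and \ref{JUDAS}. The forward direction $(\mu^2) \to \textup{\textsf{UTS}}(2)$ is immediate: arithmetical comprehension suffices to construct, term-by-term in $f^1 : [\N]^2 \to \N$, an infinite set $\Psi(f)$ together with a witness colour $n$ missing from $f([\Psi(f)]^2)$, for instance by a greedy selection procedure decidable with $\mu$.

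For the reverse direction I introduce the strong uniform version
\[
\textup{\textsf{UTS}}(2)^{+} \equiv (\exists^{\st}\Psi^{1\to 1})\big[(\forall^{\st} f^{1})\big(\Psi(f) \text{ infinite} \wedge (\exists n^{0})(n\notin f([\Psi(f)]^{2}))\big) \wedge \text{$\Psi$ std.\ ext.}\big]
\]
and argue $\RCAO \vdash \textup{\textsf{UTS}}(2)^{+} \to \paai$ by contradiction. Suppose $h^{1}$ standard witnesses $\neg\paai$, so $m_{0} := \min\{k : h(k) \neq 0\}$ is infinite. I build two standard colourings $f_{0}, f_{1} : [\N]^{2} \to \N$, both defined from $h$ by bounded formulas, agreeing on every standard input (hence $f_{0} \approx_{1} f_{1}$), but with $f_{1}$ designed so that no standard infinite $A$ agreeing with $\Psi(f_{0})$ on all standard elements can be thin for $f_{1}$. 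The design exploits the fact that an internally infinite standard $A$ must contain elements beyond every number, in particular beyond $m_{0}$; the value $f_{1}(\{x,y\})$ for $y \geq m_{0}$ is chosen, via a standard pairing-function encoding of $(x, y-m_{0})$ expressed through the standard data $h$, so that together with the assumption $A \supseteq \N_{\st}$ the image $f_{1}([A]^{2})$ exhausts $\N$. Applying $\Psi$ to $f_{0}$ and $f_{1}$, standard extensionality yields $\Psi(f_{0}) \approx_{1} \Psi(f_{1})$, and the construction then contradicts the thin-set clause applied to $\Psi(f_{1})$.

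Once $\textup{\textsf{UTS}}(2)^{+} \to \paai$ is proved in $\RCAO$, the algorithm $\mathcal{B}$ of Remark \ref{algob} casts it into the normal form of Corollary \ref{consresultcor}, and the term-extraction algorithm $\mathcal{A}$ produces a closed term $u$ such that $u(\Psi, \Xi)$ satisfies $\textup{\textsf{MU}}$ whenever $\Psi$ witnesses $\textup{\textsf{UTS}}(2)$ with extensionality functional $\Xi$. Combined with the forward direction, this yields the explicit equivalence $\textup{\textsf{UTS}}(2) \asa (\mu^{2})$, following verbatim the proof pattern of Theorem \ref{sef}.

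The main obstacle is the combinatorial design of $f_{1}$. The thin-set property is substantially laxer than the ascending/descending dichotomy of $\UADS$ in Theorem \ref{JUDAS}: the missed colour $n$ is allowed to be nonstandard, so a simple ``flip'' of the colouring at $m_{0}$ does not suffice. The colouring $f_{1}$ must carry enough combinatorial richness on its nonstandard pairs to rule out \emph{every} potential missed colour across \emph{every} standard infinite $A$ containing all standard naturals and cofinal in $\N$, while remaining definable by a bounded term in the standard $h$ (without directly invoking the nonstandard $m_{0}$). Verifying that the pairing-function construction sketched above achieves this surjectivity regardless of how $\Psi(f_{1})$ sits above $m_{0}$ is the delicate nonstandard combinatorial step on which the whole argument hinges.
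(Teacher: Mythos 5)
Your forward direction and the overall template are fine, but the reverse direction has a genuine gap, and you have already put your finger on it: with your formulation of $\textsf{UTS}(2)^{+}$, the missed colour in $(\exists n^{0})(n\notin f([\Psi(f)]^{2}))$ may be nonstandard, and you are then forced to design a colouring $f_{1}$ whose image on \emph{every} conceivable $\Psi(f_{1})$ exhausts $\N$. That combinatorial step does not go through: the standard part of $\Psi(f_{1})$ is pinned down by standard extensionality (it agrees with $\Psi(f_{0})$), but you have \emph{no} control over the nonstandard part of $\Psi(f_{1})$ --- it could be any cofinal set of infinite numbers --- and no single standard-definable $f_{1}$ can be surjective on $[A]^{2}$ for all such $A$ simultaneously. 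The "pairing-function richness" you gesture at is not achievable because the adversary chooses which nonstandard pairs you get to colour through.

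The paper sidesteps this entirely by reshaping $\textsf{UTS}(2)$ \emph{before} introducing the $^{+}$-version: it first applies $\textsf{QF-AC}^{1,0}$ to the inner existential $(\exists n^{0})$ to obtain a witnessing functional, bundling the output into $\Phi^{1\to(1\times 0)}$ with $\Phi(f)(1)$ the thin set and $\Phi(f)(2)$ the missed colour. Then $\textsf{UTS}(2)^{+}$ asserts this $\Phi$ (not just the set-valued $\Psi$) is standard and standard extensional. For standard $f$, the missed colour $\Phi(f)(2)$ is now a \emph{standard} number, and the whole combinatorial problem collapses: define $g(k,l)$ to agree with $f$ when $(\forall i\le\max(k,l))\,h(i)=0$ and to equal the single fixed standard colour $\Phi(f)(2)$ otherwise. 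Then $f\approx_{1}g$, extensionality gives $\Phi(g)(2)=\Phi(f)(2)$, and picking any nonstandard pair $\{k_{0},k_{0}'\}\subseteq\Phi(g)(1)$ with $k_{0},k_{0}'>m_{0}$ lands in the second case, so $\Phi(g)(2)\in g([\Phi(g)(1)]^{2})$, a contradiction. The lesson is that the template requires \emph{all} significant existentials of the zoo principle to be Skolemised into the uniform functional before adding standard extensionality; leaving $(\exists n^{0})$ inside, as you did, leaves the argument with an unwinnable combinatorial task.
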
  
\begin{proof}
The forward (explicit) implication is immediate from the results in \cite{freesets}*{\S5}.
For the reverse implication, we will prove a suitable implication in $\RCAO$ and apply the algorithms $\mathcal{B}$ and $\mathcal{A}$ using Corollary \ref{consresultcor}.  
Hence, let $\Psi$ be as in \textsf{UTS}$(2)$ and apply \textsf{QF-AC}$^{1,0}$ to $(\forall f^{1}:[N]^{2}\di N)(\exists n^{0})\big[n\not\in f\big([\Psi(f)]^{2}\big)\big] $ to obtain $\Xi^{2}$ witnessing $n^{0}$.
In this way, \textsf{UTS}$(2)$ becomes
\[
(\exists \Phi^{1\di (1\times 0)})(\forall f^{1}:[N]^{2}\di N)\big[\Phi(f)(1) \textup{ is infinite } \wedge \Phi(f)(2)\not\in f\big([\Phi(f)(1)]^{2}\big)  \big].
\]
Let $A(\Phi, f)$ be the formula in square brackets  and define $\textsf{UTS}(2)^{+}$ as 
\[
(\exists^{\st} \Phi^{1\di (1\times 0)})(^{\st}\forall f^{1}:[N]^{2}\di N)\big[ A(\Phi, f) \wedge\Phi \textup{ is standard extensional}  \big].
\]
We now prove that $\textsf{UTS}(2)^{+}\di \paai$;
%As in the proofs of Theorems \ref{cruxial} and \ref{JUDAS}, we can prove that $\Phi$ is standard and standard extensional.  
To this end, assume the latter and suppose $h^{1}$ is a counterexample to $\paai$, i.e.\ $(\forall^{\st}n)h(n)=0 \wedge (\exists m)h(m)\ne0$.  
Fix standard $ f^{1}:[N]^{2}\di N$ and define $g^{1}:[N]^{2}\di N$ as:
\be\label{meeh}
g(k,l):=
\begin{cases}
f(k,l) & (\forall i\leq \max(k,l))h(i)=0\\
\Phi(f)(2) &\textup{otherwise}
\end{cases}.
\ee
By assumption, $f\approx_{1}g$, and we obtain $\Phi(f)\approx_{(1\times 0)}\Phi(g)$ by the standard extensionality of $\Phi$.  
Note that in particular $\Phi(f)(2)=\Phi(g)(2)$, and since $\Phi(g)(1)$ is infinite, there are some $k_{0}'>k_{0}>m_{0}$ such that $k_{0},k_{0}'\in \Phi(g)(1)$ where $m_{0}$ is such that $h(m_{0})\ne 0$.    
However, by the definition of $g$, we obtain $\Phi(f)(2)\in g([\Phi(g)(1)]^{2})$, as we are in the second case of \eqref{meeh} for $g(k_{0}, k_{0}')$.     
Since $\Phi(f)(2)=\Phi(g)(2)$, the previous yields the contradiction $\Phi(g)(2)\in g([\Phi(g)(1)]^{2})$, and hence $\paai$ must hold.  
Now bring $\textsf{UTS}(2)^{+}\di \paai$ in the normal form using $\mathcal{B}$ and apply term extraction via $\mathcal{A}$, using Corollary \ref{consresultcor}.    
%By \cite{bennosam}*{Cor.\ 12}, the latter implies $(\exists^{2})$, and we are done.    
\end{proof}
Clearly, the previous proof also goes through for the uniform version of \textsf{STS}$(2)$, which is \textsf{TS}$(2)$ limited to \emph{stable} functions, 
i.e.\ for functions $f:[N]^{2}\di N$ such that $(\forall x^{0})(\exists y^{0})(\forall z^{0}\geq y)(f(x,y)=f(x,z))$.

\medskip

Next, we consider the following corollary regarding the free set theorem, where $\textup{\textsf{UTS}}(k)$ and $\textup{\textsf{UFS}}(k)$ have obvious definitions in light of the notations in \cite{freesets}.  
\begin{cor}
In $\RCAo$, we have \(explicitly\) that $(\mu^{2})\asa \textup{\textsf{UTS}}(k)\asa \textup{\textsf{UFS}}(k)$, where $k\geq 1$.
\end{cor}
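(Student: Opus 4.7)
The plan is to combine the preceding theorem's nonstandard technique with the known uniform reductions between the thin and free set theorems, so that the whole chain of equivalences collapses onto facts already in hand. The forward implications $(\mu^{2})\di \textup{\textsf{UTS}}(k)$ and $(\mu^{2})\di \textup{\textsf{UFS}}(k)$ are immediate and explicit from the arithmetical constructions in \cite{freesets}*{\S5}, which go through term-by-term once $\mu^{2}$ is available. For the reduction $\textup{\textsf{UFS}}(k)\di \textup{\textsf{UTS}}(k)$, I would term-translate the standard proof of $\textsf{FS}(k)\di \textsf{TS}(k)$: if $A$ is a free set for $f$, then for any fixed $a_{0}\in A$ the set $A\setminus\{a_{0}\}$ is thin with witness $a_{0}$, since any $\vec x\in [A\setminus\{a_{0}\}]^{k}$ satisfying $f(\vec x)\in A$ must by freeness have $f(\vec x)\in\vec x$, whence $f(\vec x)\ne a_{0}$. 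This transformation is given by a term in the functional provided by $\textup{\textsf{UFS}}(k)$, yielding the explicit implication.

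It remains to derive $(\mu^{2})$ from $\textup{\textsf{UTS}}(k)$ explicitly for arbitrary $k\ge 1$. I would follow the template of the preceding theorem verbatim: apply $\QFAC^{1,0}$ to produce a single functional $\Phi^{1\di(1\times 0)}$ outputting both the infinite set and the omitted value, form the strong uniform version $\textup{\textsf{UTS}}(k)^{+}$ by conjoining standard extensionality of $\Phi$, and derive $\paai$ from it in $\RCAO$. The construction \eqref{meeh} generalizes verbatim to $k$-tuples: given a putative counterexample $h$ to $\paai$ and a standard $f:[N]^{k}\di N$, define
\[
g(n_{1},\dots,n_{k}) := \begin{cases} f(n_{1},\dots,n_{k}) & (\forall i\leq \max(n_{1},\dots,n_{k}))\, h(i)=0 \\ \Phi(f)(2) & \textup{otherwise} \end{cases}.
\]
Then $f\approx_{1}g$ by the assumption on $h$, so standard extensionality of $\Phi$ forces $\Phi(f)\approx_{(1\times 0)}\Phi(g)$ and hence $\Phi(f)(2)=\Phi(g)(2)$. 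Since $\Phi(g)(1)$ is infinite, I may select $n_{1}<\dots<n_{k}$ all lying in $\Phi(g)(1)$ above some $m_{0}$ with $h(m_{0})\ne 0$; the second clause of the definition of $g$ then forces $g(n_{1},\dots,n_{k})=\Phi(f)(2)=\Phi(g)(2)$, contradicting the defining property $\Phi(g)(2)\notin g([\Phi(g)(1)]^{k})$. Normalizing $\textup{\textsf{UTS}}(k)^{+}\di\paai$ via the algorithm $\mathcal{B}$ and applying the term-extraction algorithm $\mathcal{A}$ through Corollary~\ref{consresultcor} produces the term witnessing $\textup{\textsf{UTS}}(k)\di(\mu^{2})$ explicitly.

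The main obstacle I anticipate is subtle rather than technical: for $k=1$ the internal principle $\textsf{TS}(1)$ is comparatively weak, so one should verify that the uniform version genuinely requires $(\mu^{2})$. Inspection of the argument above shows it does, because only the existence of arbitrarily large elements in the infinite set $\Phi(g)(1)$ is invoked, which is available for every $k\ge 1$. Thus the same reduction delivers the explicit implication in every arity, and combined with the two easy directions above, this closes the triple of explicit equivalences asserted in the corollary.
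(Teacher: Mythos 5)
Your argument is correct, but it takes a somewhat different route than the paper's. The paper treats $k\geq 2$ by invoking the $\textsf{UTS}(2)$ theorem already proved together with the \emph{uniformity} of the internal reductions in \cite{freesets}*{Theorems 3.2 and 3.4} (which let one pass to higher and lower arities and between $\textsf{TS}$ and $\textsf{FS}$), and only for $k=1$ does it redo the nonstandard argument ``as in the theorem.'' You instead generalize the nonstandard construction of $g$ from the $\textsf{UTS}(2)$ theorem directly to $k$-tuples for \emph{all} $k\geq 1$, and supply your own term-level reduction $\textsf{UFS}(k)\di\textsf{UTS}(k)$ by deleting a single point from the free set — a clean observation that the paper leaves implicit by citing \cite{freesets}*{Theorem 3.4}. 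Your route is more self-contained: it does not require verifying the uniformity of the external reduction theorems, and it treats all arities uniformly in one pass. The paper's route is shorter in text and illustrates the broader methodological point that known internal implications, when uniform, immediately transfer to explicit implications between the corresponding uniform principles. Both arguments succeed, and your worry about $k=1$ is indeed unfounded for the reason you give: the contradiction only needs arbitrarily large elements in $\Phi(g)(1)$, which the infinitude of that set supplies for any arity.
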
  
\begin{proof}
The case $k\geq 2$ is immediate from the theorem, the uniformity of \cite{freesets}*{Theorems 3.2 and 3.4}, and the fact that $\ACA_{0}$ proves $\textsf{FS}$ (\cite{freesets}).  
To obtain the set $B$ in the proof of the former theorem, apply \textsf{QF-AC}$^{1,0}$ to the fact that the free set is infinite.  For the case $k=1$, proceed as in the theorem.    
\end{proof}
As noted by Kohlenbach in \cite{kohlenbach2}*{\S3}, the (necessary) use of the law of excluded middle in the proof of a theorem, gives rise to a discontinuity in the uniform version of this theorem.  
Now, even the proof of $\textsf{FS}(1)$ in \cite{freesets}*{Theorem 2.2} uses this law, explaining the equivalence to $(\exists^{2})$ of the associated uniform version.

\subsection{Cohesive sets}\label{tempo2}
In this section, we study principles based on \emph{cohesiveness} (See e.g.\ \cite{dslice}*{Def.\ 6.30}).  We start with the principle \COH.    
\bdefi
A set $C$ is cohesive for a collection of sets $R_{0}, R_{1},\dots$  if it is infinite and for each $i$, either $C\subseteq^{*} R_{i}$ or $C\subseteq^{*} \overline{R}_{i}$. 
%C ?? Ri or C ?? Ri.
Here, $\overline{A}$ is the complement of $A$ and $A\subseteq^{*}B$ means that $A\setminus B$ is finite.    
\edefi

\bdefi[\COH]
Every countable collection of sets has a cohesive set.
\edefi
It is important to note that $\COH$ involves multiple significant existential quantifiers:  The `$(\exists C^{1})$' quantifier, but also the existential type 0-quantifiers in $C\subseteq^{*} R_{i}\vee C\subseteq^{*} \overline{R}_{i}$.   
As we will see, it is important that the functional from the uniform version of $\COH$ outputs both the set $C$ \emph{and} an upper bound to $C\setminus R_{i}$ or $C\setminus \overline{R}_{i}$.
It would be interesting, but beyond the scope of this paper, to study a weak version of $\UCOH$ only outputting $C$.  
%UCOH should be COH with the addition of a functional $\Psi^{(0\di 1)\di (1\times 0)}$ such that $\Psi(R)(1)$ is the cohesive set of $R^{0\di 1}$ and $\Psi(R)(2)$ is the upper bound on $Psi(R)(1)\setminus R_{i}$ or $Psi(R)(1)\setminus \overline{R}_{i}$.  
\bdefi[\UCOH] There is $\Phi^{(0\di 1)\di (1\times 1)}$ such that for all $ R^{0\di 1}$
\begin{align}
(\forall k^{0})(\exists l^{0}>k)[l\in & \Phi(R)(1)] \wedge   (\forall i^{0})\Big[\big(\forall n\in \Phi(R)(1)\big)(n\geq \Phi(R)(2)(i)\di n\in R(i)) \notag\\
& \vee\big(\forall m\in \Phi(R)(1)\big)(m\geq \Phi(R)(2)(i)\di m\in \overline{R(i)})\Big].\label{dfghj}
\end{align}
\edefi
Note that we may treat the collection $R^{0\di 1}$ as a type 1-object, namely as a double sequence (See for instance \cite{simpson2}*{p.\ 13}), and the same holds for $\Phi(R)$.  

\begin{thm}\label{dingdong}
In $\RCAo$, we have the explicit equivalence $\UCOH\asa(\mu^{2})$. 
\end{thm}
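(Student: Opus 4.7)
The forward (explicit) implication $(\mu^{2})\di\UCOH$ is routine: any standard $\ACA_{0}$-style construction of a cohesive set uses $\mu$ at each stage to decide whether to follow $R(i)$ or its complement, and records the stage index as the threshold $\Phi(R)(2)(i)$, yielding a closed term $t(\mu)$ with $(\forall \mu^{2})[\MU(\mu)\di\UCOH(t(\mu))]$.

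For the reverse implication, I plan to give a direct construction in $\RCAo$. Given $\Phi$ witnessing $\UCOH$ and an arbitrary $f^{1}$, define the primitive-recursive double sequence
\[
R^{f}(i)(n) := \begin{cases} 1 & (\exists k \leq n)[f(k)=0] \\ 0 & \text{otherwise}\end{cases}
\qquad (\text{independently of } i),
\]
and write $(C,g) := \Phi(R^{f})$. The key observation is that the cohesiveness disjunction \eqref{dfghj} at $i=0$ \emph{decides} whether $f$ has a zero: if $f$ has a least zero $k^{*}$, then $R^{f}(0) = \{n : n \geq k^{*}\}$ is cofinite and case (b) would confine almost all of $C$ to $\{0,\ldots,k^{*}-1\}$, contradicting the infinitude of $C$; if $f$ has no zero, $R^{f}(0) = \emptyset$ and case (a) is vacuously impossible. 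Hence for any single $n_{0}\in C$ with $n_{0} \geq g(0)$,
\[
(\exists k)[f(k)=0] \;\leftrightarrow\; n_{0}\in R^{f}(0) \;\leftrightarrow\; (\exists k \leq n_{0})[f(k)=0].
\]

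To produce such $n_{0}$ uniformly, I would invoke $\QFAC^{1,0}$ on the quantifier-free matrix $[n \geq \Phi(R^{f})(2)(0) \wedge \Phi(R^{f})(1)(n) = 1]$, whose outer $(\forall f^{1})(\exists n^{0})$ has a witness by the infiniteness of $C$. This yields a functional $F$ with $F(f)\in C$ and $F(f) \geq g(0)$. The Feferman $\mu$-operator is then $\mu(f):=\min\{k \leq F(f):f(k) = 0\}$ if such $k$ exists, else $0$, which is primitive recursive in $\Phi$, $F$, $f$ and satisfies $\MU(\mu)$ by the equivalence above. This gives $(\forall \Phi)[\UCOH(\Phi)\di \MU(u(\Phi))]$ for a closed term $u$, completing the explicit equivalence. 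The argument fits the template of Section~\ref{czoo}: it is of the shape produced by algorithms $\mathcal{B}$ and $\mathcal{A}$ (Remark~\ref{algob}, Corollary~\ref{consresultcor}) applied to a nonstandard proof of $\UCOH^{+}\di\paai$, though here the cohesive-set computation is already direct enough that passage through Nonstandard Analysis is not strictly necessary.

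The main subtleties I anticipate are two. First, the legitimacy of $\QFAC^{1,0}$: the matrix must be genuinely quantifier-free, which holds since $\Phi$ and the primitive-recursive encoding $f\mapsto R^{f}$ both enter as terms, making the formula a Boolean combination of atomic statements. Second, the cohesiveness disjunction is exploited \emph{without} first deciding which disjunct of \eqref{dfghj} holds, which one cannot do in $\RCAo$ alone; rather, we rely only on the fact that the decidable condition "$n_{0}\in R^{f}(0)$" at any single $n_{0}$ past $g(0)$ correctly resolves whether $f$ has a zero. The essential role of the bound $g = \Phi(R)(2)$ flagged in the paragraph preceding the definition of $\UCOH$ is what makes this possible: without it, there is no effective handle on where the cohesive tail of $C$ begins, which is precisely why the weak variant of $\UCOH$ (outputting only $C$) is out of reach of this argument.
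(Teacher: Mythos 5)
Your proof is correct, and it takes a genuinely different route from the paper's. The paper's proof follows the general template of Section~\ref{tempie}: it passes through Nonstandard Analysis by formulating $\UCOH^{+}$ (strengthened with standard extensionality and a witness $\Xi$ from $\QFAC^{1,0}$), proving $\UCOH^{+}\di\paai$ in $\RCAO$ via the construction $k\in R'(j)\asa[k\in R(j)\wedge(\forall n\leq\max(j,k))h(n)=0]$, and then invoking the algorithm $\RS$ (i.e.\ normal-form extraction via Corollary~\ref{consresultcor}) to obtain the explicit term. You instead work entirely inside $\RCAo$: you feed $\Phi$ a constant sequence $R^{f}$ that encodes the halting query $(\exists k)f(k)=0$ into a single column, observe that the output bound $g(0)=\Phi(R^{f})(2)(0)$ plus any element of $C=\Phi(R^{f})(1)$ past $g(0)$ decides that query by a bounded search, and produce that element uniformly via $\QFAC^{1,0}$ applied to a genuinely quantifier-free matrix in the parameter $\Phi$. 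Your case analysis correctly shows that whichever disjunct of \eqref{dfghj} holds at $i=0$ is slaved to whether $f$ has a zero (case (b) contradicting infinitude of $C$ when a zero exists, case (a) being impossible when $R^{f}(0)=\emptyset$), so the decidable check $n_{0}\in R^{f}(0)$ does the work without needing to first determine the disjunct. Your approach is more elementary and yields the explicit term $u(\Phi)$ by direct construction, whereas the paper's method is less ad hoc and generalizes uniformly across the zoo but obtains the term only after term extraction. You also correctly identify that the output threshold $\Phi(R)(2)$ is essential to your argument, which is exactly the subtlety the paper flags before the definition of $\UCOH$; your observation makes that remark concrete.
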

\begin{proof}
For the reverse implication, since cohesiveness is an arithmetical property, it is easy to build the functional $\Phi$ from $\UCOH$ assuming $(\exists^{2})$.

\medskip

For the forward implication, consider $\UCOH$ and apply \textsf{QF-AC}$^{1,0}$ to the first conjunct of \eqref{dfghj} to obtain $\Xi^{2}$ such that $(\forall R^{0\di 1}, k^{0})[\Xi(R,k)>k \wedge \Xi(R,k)\in \Phi(R)(1)]$.  
Define $\UCOH^{+}$ as the resulting formula but starting with $(\exists^{\st} \Phi, \Xi)(\forall^{\st} R^{0\di 1})$ and the addition that $\Phi$ and $\Xi$ are standard extensional.  
% qualifies for PF-TP$_{\forall}$, i.e.\ we may assume that $\Phi$ and $\Xi$ are standard.  
%Following the proof of Theorem \ref{cruxial}, we may assume $\Phi$ and $\Xi$ are also standard extensional.  
Note that we can decide which disjunct holds (for given $i$) in the second conjunct of \eqref{dfghj} by checking if $\Xi(R, \Phi(R)(2)(i))\in R(i)$.  For standard $R, i$, the latter only involves standard objects.  
We now prove $\UCOH^{+}\di \paai$, from which the theorem follows by applying the algorithms $\mathcal{B}$ and $\mathcal{A}$ using Corollary \ref{consresultcor}.  % to the normal form of the former implication.  

\medskip

Now assume $\UCOH^{+}$ and suppose there is standard $h^{1}$ such that $(\forall^{\st} n)h(n)=0 \wedge (\exists m)h(m)\ne 0$.
Suppose for some fixed standard $R$, there is standard $i_{0}$ such that the first disjunct holds in the second conjunct of \eqref{dfghj}.  
Now define $R'$ as follows: $k\in R'(j) \asa [k\in R(j)\wedge (\forall n\leq \max(j,k))h(n)=0 ]$.  
Clearly, $R'$ is standard and we have $R\approx_{0\di 1}R'$, implying $\Phi(R)\approx_{1\times 1}\Phi(R')$.    
In particular, $\Phi(R)(2)(i_{0})=_{0}\Phi(R')(2)(i_{0})$, and $\Phi(R)(1)\approx_{1} \Phi(R')(1)$.  However, then the first disjunct holds in the second conjunct of \eqref{dfghj} for $R', i_{0}$ too, since 
$\Xi(R', \Phi(R')(2)(i_{0}))\in R'(i_{0})$ is equivalent to $\Xi(R, \Phi(R)(2)(i_{0}))\in R(i_{0})$.  
However, now let $m_{0}$ be such that $h(m_{0})\ne 0$ and take $m_{0}<l_{0}\in \Phi(R')(1)$.  Clearly, $l_{0}>\Phi(R')(2)(i_{0})$ as the first number is infinite and the second finite.  But then $l_{0}\in R'(i_{0})$ by \UCOH$^{+}$, which is impossible by the definition of $R'$.         
A similar procedure leads to a contradiction in case the second disjunct holds in the second conjunct of \eqref{dfghj} for some standard $i_{0}$.  
In light of these contradictions, the implication $\UCOH^{+}\di \paai$ follows.  % and we are done.  
%we must have $\paai$, and $(\exists^{2})$ follows from $\UCOH$ by \cite{bennosam}*{Cor.\ 12}.  
\end{proof}
While Ramsey's theorem for pairs $\RT_{2}^{2}$ does not imply $\WKL$ (See e.g.\ \cites{liu, dslice}), the uniform versions are equivalent.  
\begin{cor}
In $\RCAo$, we have the explicit equivalence $\textup{\textsf{URT}}_{2}^{2}\asa \textup{\textsf{UWKL}}$.
\end{cor}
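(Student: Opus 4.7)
The plan is to factor the equivalence through $(\mu^{2})$, exploiting Theorem \ref{dingdong} and mimicking the nonstandard templates already used in this section. Concretely, I would verify the cycle
\[
\textup{\textsf{URT}}_{2}^{2} \di (\mu^{2}) \di \textup{\textsf{UWKL}} \di (\mu^{2}) \di \textup{\textsf{URT}}_{2}^{2},
\]
with each arrow witnessed by a term in the language, yielding the desired \emph{explicit} equivalence.

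For the hard direction $\textup{\textsf{URT}}_{2}^{2}\di (\mu^{2})$, I would follow the recipe used for $\UADS$ in Theorem \ref{JUDAS} and for $\UCOH$ in Theorem \ref{dingdong}. Set up $\textup{\textsf{URT}}_{2}^{2,+}$ by adding standard extensionality to the Ramsey functional $\Psi^{\textup{URT}}$, assume $\paai$ fails via a standard $h^{1}$ with $(\forall^{\st}n)h(n)=0 \wedge (\exists m)h(m)\ne 0$ and least violator $m_{0}\in\Omega$, and derive a contradiction. Uniformise the Cholak--Jockusch--Slaman style reduction $\RT_{2}^{2}\di\COH$ by a term $t$ that converts a sequence $R^{0\di 1}$ into a $2$-colouring $c_{R}:[\N]^{2}\di 2$ (e.g.\ $c_{R}(x,y)=0$ iff $R_{i}(x)=R_{i}(y)$ for all $i<\min(x,y)$), so that any infinite $c_{R}$-homogeneous set is cohesive for $R$. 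Then mimic the discontinuity construction of Remark \ref{dikko}: define a standard variant $c'$ agreeing with a fixed standard $c$ on all standard pairs but deviating once $h$ has become nonzero, so that $c\approx_{1}c'$ forces $\Psi^{\textup{URT}}(c)\approx_{1}\Psi^{\textup{URT}}(c')$ by standard extensionality, while the induced cohesive sets witness the two disjuncts of \eqref{dfghj} for incompatible reasons, producing the required contradiction. Bringing $\textup{\textsf{URT}}_{2}^{2,+}\di \paai$ into the normal form via the algorithm $\mathcal{B}$ of Remark \ref{algob} and applying the term extraction algorithm $\mathcal{A}$ via Corollary \ref{consresultcor} yields the explicit implication $\textup{\textsf{URT}}_{2}^{2}\di (\mu^{2})$.

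The remaining arrows are direct. For $(\mu^{2})\di \textup{\textsf{UWKL}}$, the leftmost-path functional is a term in $\mu$: at each node extend by $0$ if the subtree above remains infinite (a $\Pi^{0}_{1}$ property decidable by $\mu$) and by $1$ otherwise. For $\textup{\textsf{UWKL}}\di (\mu^{2})$, this is classical (\cite{kohlenbach2}); alternatively, replay the template of Theorem \ref{UDNRPLUS} with $\textup{\textsf{UWKL}}^{+}$ instead of $\textup{\textsf{UDNR}}^{+}$, using $\WKL\di \DNR$. For $(\mu^{2})\di \textup{\textsf{URT}}_{2}^{2}$, the standard arithmetical recursive construction of an infinite homogeneous set from a $2$-colouring of pairs is a direct term in $\mu$.

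The main obstacle is designing the perturbation $c'$ in the first step so that standard extensionality really does force the two homogeneous sets into contradiction. The Ramsey setting is subtler than the linear-order situation treated in Theorem \ref{JUDAS}: homogeneous sets enjoy much less rigidity than ascending or descending sequences, so the perturbation must be tailored to exploit the coloring structure built by $t$ (agreement across the $R_{i}$) rather than global flips of the coloring. Once this combinatorial core is in place, the rest is mechanical application of $\mathcal{B}$, $\mathcal{A}$, and Theorem \ref{dingdong}, exactly as in the preceding corollaries of this section.
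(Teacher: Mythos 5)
Your high-level plan --- route everything through $(\mu^{2})$ --- is right, and your use of Theorem \ref{dingdong} is the correct key ingredient, but you overcomplicate the forward direction and, by your own admission, leave its combinatorial core unfinished. Once you have (as you propose) a term $t$ turning a sequence $R^{0\di 1}$ into a $2$-colouring $c_{R}$ such that any infinite $c_{R}$-homogeneous set yields a cohesive set for $R$, you already have an \emph{explicit} term from a functional witnessing $\textup{\textsf{URT}}_{2}^{2}$ to a functional witnessing $\UCOH$: namely $R\mapsto \Psi^{\textup{URT}}(c_{R})$ (suitably post-processed to supply the bound components of \eqref{dfghj}). At that point Theorem \ref{dingdong} immediately gives $\UCOH\di (\mu^{2})$ explicitly, and $(\mu^{2})\asa\UWKL$ is Kohlenbach's result, so $\textup{\textsf{URT}}_{2}^{2}\di \UWKL$ follows by composition of terms. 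You therefore do \emph{not} need a fresh nonstandard perturbation $c'$ in the Ramsey setting; the perturbation argument has already been carried out once, at the level of $\UCOH$, inside the proof of Theorem \ref{dingdong}. This is exactly the paper's route: it notes that the proof of $\RT_{2}^{2}\di\COH$ in Hirschfeldt's book is uniform, obtains $\textup{\textsf{URT}}_{2}^{2}\di\UCOH$ from that observation, and then invokes Theorem \ref{dingdong} together with $(\exists^{2})\asa\UWKL$.

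The genuine gap in your write-up is precisely the step you flag yourself as ``the main obstacle'': you never actually construct the perturbed colouring $c'$ and show that standard extensionality of $\Psi^{\textup{URT}}$ gives a contradiction. That step is substantially harder than the linear-order case, because a Ramsey functional has no canonical ``which disjunct'' test as does $\UADS$ or $\UCOH$ (there a single value $\Psi(X)(0)<_{X}\Psi(X)(1)$, or a single membership query $\Xi(R,\ldots)\in R(i)$, decides the case, making the clash rigid). As presented, your argument is circular-adjacent: you uniformize $\RT_{2}^{2}\di\COH$ and then attempt to rebuild a perturbation on colourings that really just re-derives the $\UCOH$ perturbation indirectly, without noticing that composition ends the proof. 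For the reverse direction your $\mu$-recursive tree construction for $(\mu^{2})\di\textup{\textsf{URT}}_{2}^{2}$ is fine and matches the content of the Sakamoto--Yamazaki result the paper cites.
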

\begin{proof}
The implication $\textup{\RT}_{2}^{2}\di \COH$ is proved in \cite{dslice}*{6.32}.  This proof is clearly uniform (as also noted at the end of \cite{dslice}*{p.\ 85}), yielding \textsf{URT}$_{2}^{2}\di $ \UCOH, and the theorem implies the forward implication, since $(\exists^{2})\asa \UWKL$ (\cite{kohlenbach2}).  
By \cite{yamayamaharehare}*{Theorem 4.2}, the reverse implication follows.    
\end{proof}
Next, we study the cohesive version of \ADS.  Recall the definition of a stable order from Definition \ref{DISQUE}.  
Denote by \textsf{CADS} the statement that every infinite linear order has a stable suborder.  The connection between \textsf{CADS} and cohesiveness is discussed between \cite{dslice}*{9.17-9.18}.
Now let \textsf{UCADS} be the `fully' uniform version of \textsf{CADS} as follows. 
\bdefi[\textsf{UCADS}] There is $\Phi^{1\di (1\times 1)}$ such that for infinite linear orders $X^{1}$, $Y\equiv\Phi(X)(1)$ is a stable suborder of $X$ and $\Phi(X)(2)$ witnesses this, i.e.\ for $y^{0}\in Y$:
\be\label{dark}
%(\forall y\in \Phi(X)(1))\big[ 
  (\forall w^{0})(y\leq_{Y} w \di w\leq_{Y} \Phi(X)(2)(y)) \vee  (\forall v^{0})(y\geq_{Y} v \di v\geq_{Y} \Phi(X)(2)(y)). % \big].
\ee
\edefi
\begin{thm}
In $\RCAo$, we have the explicit equivalence $\textup{\textsf{UCADS}}\asa (\mu^{2})$.
\end{thm}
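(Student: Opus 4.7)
The proof follows the template of Theorems~\ref{JUDAS} and~\ref{dingdong}. The reverse direction $(\mu^2)\di\textup{\textsf{UCADS}}$ is immediate since, given $\mu^2$, a stable suborder of an infinite linear order together with its bound-witnessing function is an arithmetical construction. For the forward direction, define the strong uniform version
\[
\textup{\textsf{UCADS}}^+ \equiv (\exists^{\st}\Phi^{1\di(1\times 1)})\big[(\forall^{\st}X^1)[\textup{LO}_\infty(X)\di A(\Phi,X)]\wedge \textup{$\Phi$ is standard extensional}\big],
\]
where $A(\Phi,X)$ is the internal matrix of \eqref{dark}. The plan is to establish $\textup{\textsf{UCADS}}^+\di \paai$ in $\RCAO$ and then pass to the explicit equivalence $\textup{\textsf{UCADS}}\asa(\mu^2)$ by applying algorithm $\mathcal{B}$ from Remark~\ref{algob} to normalise the implication and algorithm~$\mathcal{A}$ to extract the witnessing term via Corollary~\ref{consresultcor}, in analogy with Theorem~\ref{sef}.

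To prove $\textup{\textsf{UCADS}}^+\di\paai$, assume $\textup{\textsf{UCADS}}^+$ with witness $\Phi$ and, toward a contradiction, suppose standard $h^1$ satisfies $(\forall^{\st}n)h(n)=0$ yet $(\exists m)h(m)\ne 0$; let $m_0$ be the least such $m$, necessarily nonstandard. Let $\prec$ be the standard infinite linear order from \eqref{heneriek} and set $(Y,B):=\Phi(\prec)$ and $(Z,B'):=\Phi(<_0)$; all four are standard. Since $\prec\approx_1 <_0$, standard extensionality yields $Y\approx_1 Z$ and $B(y)=_0 B'(y)$ for every standard $y$. Let $y_0:=\min Y$ and $y_1:=\min(Y\setminus\{y_0\})$; these are well-defined because $Y$ is internally infinite, and both are standard since minimisation of a standard set is a standard operation (axiom~3 of $\T^*_{\st}$). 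Note $y_0<y_1\leq m_0$.

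For $y_0$ viewed in $\prec$: because $Y$ is internally $\N$-unbounded whereas $Y\cap[0,m_0]$ is bounded, the set $Y\cap(m_0,\infty)$ is internally unbounded, and since $\prec$ reverses $<_0$ there, it is $\prec$-unbounded below; every such element is a $\prec$-predecessor of $y_0$, so the second disjunct of \eqref{dark} cannot supply a $\prec$-lower bound $B(y_0)$, and the first disjunct is thus forced. This first disjunct yields $B(y_0)\succeq y_1$ in $\prec$ (since $y_1\in Y\cap(y_0,m_0]$), which in view of $y_1\leq m_0$ amounts to $B(y_0)\geq_0 y_1$. For $y_0$ viewed in $<_0$: since $Z$ is internally infinite, the $<_0$-successors of $y_0$ in $Z$ are unbounded above, so the first disjunct of \eqref{dark} fails and the second gives $B'(y_0)\leq_0 y_0$. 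Standard extensionality now forces $y_1\leq B(y_0)=B'(y_0)\leq y_0<y_1$, the desired contradiction. The main obstacle will be the careful bookkeeping verifying the standardness of $y_0$ and $y_1$ and confirming that exactly one disjunct of \eqref{dark} is forced in each of the two orders; the passage to the explicit equivalence is then a routine instance of the template.
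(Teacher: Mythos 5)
The structure of your argument—building on the orders $\prec$ and $<_0$ from Theorem~\ref{JUDAS}, using standard extensionality to equate $\Phi(\prec)$ and $\Phi(<_0)$ on standard data, forcing opposite disjuncts of \eqref{dark} in the two orders, and closing via algorithms $\mathcal B$ and $\mathcal A$—is the same as the paper's. The gap is in the step where you take $y_0:=\min Y$ and $y_1:=\min(Y\setminus\{y_0\})$ and assert that these are standard ``since minimisation of a standard set is a standard operation (axiom 3 of $\T^*_{\st}$).'' That clause only gives $\st(f)\wedge\st(x)\di\st(f(x))$; to invoke it you need a \emph{closed term} of $\T^*$ computing the least element of (the characteristic function of) a set. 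But $\T^*$ consists of the G\"odel-primitive-recursive constants of $\textsf{E-PA}^{\omega*}$, and unbounded search is not among them, so no such term exists. Indeed a standard set can have no standard element at all: with $h$ your counterexample to $\paai$ and $m_0$ least with $h(m_0)\ne 0$, the singleton $\{m_0\}$ has characteristic function primitive recursive in $h$, hence is standard, yet contains only the nonstandard $m_0$. Nothing excludes $Y=\Phi(\prec)(1)\subseteq\{m_0+1,m_0+2,\dots\}$, in which case your $y_0,y_1$ are nonstandard and the crucial equalities $B(y_0)=B'(y_0)$ from standard extensionality are no longer available. In effect, ``a nonempty standard set has a standard element'' is a form of $\Sigma^0_1$-Transfer, which is essentially what you are trying to prove.

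The paper avoids this by first applying $\QFAC^{1,0}$ to the statement that $\Phi(X)(1)$ is (internally) infinite, producing $\Xi^2$ with $\Xi(X,k)>_0 k$ and $\Xi(X,k)\in\Phi(X)(1)$, and then \emph{building the standardness and standard extensionality of $\Xi$ into the definition of $\textsf{UCADS}^+$}; the algorithm $\mathcal B$ from Remark~\ref{algob} then simply carries the extra standard existential $\Xi$ through the normal-form computation. With a standard $\Xi$ available, $\Xi(\prec,0)$ is a standard element of $Y$, and $\Xi(\prec,\Xi(\prec,0))$ a second one, giving exactly the two standard witnesses your argument requires. If you strengthen your $\textsf{UCADS}^+$ in this way and replace $y_0,y_1$ by standard elements obtained from $\Xi$, the remainder of your computation goes through and recovers the paper's proof.
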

\begin{proof}
The reverse implication is immediate in light of Theorem \ref{dingdong} and the uniformity of the proofs of \cite{schirfeld}*{Prop.\ 1.4 and 2.9}.  
For the forward implication, we proceed as in the proof of Theorem \ref{dingdong}: Consider \textsf{UCADS} and apply \textsf{QF-AC}$^{1,0}$ to the 
formula expressing that $\Phi(X)(1)$ is infinite to obtain $\Xi^{2}$ such that $(\forall X^{1},k^{0})[\Xi(X,k)>_{0}k \wedge \Xi(X,k)\in \Phi(X)(1)]$.  
Define $\textsf{UCADS}^{+}$ as the resulting formula but starting with $(\exists^{\st} \Phi, \Xi)(\forall^{\st} X^{1})$ and the addition that $\Phi$ and $\Xi$ are standard extensional.  
%The resulting formula (starting with $(\exists \Phi, \Xi)$) qualifies for PF-TP$_{\forall}$, i.e.\ we may assume that $\Phi$ and $\Xi$ are standard.  
%Following the proof of Theorem \ref{cruxial}, $\Phi$ and $\Xi$ are also standard extensional.  

\medskip

Now assume $\textsf{UCADS}^{+}$ and suppose $h$ is a counterexample to $\paai$.  Consider again the orders $<_{0}$ and $\prec$ from the proof of Theorem \ref{JUDAS}.  
Since $<_{0}~\approx_{1}~\prec$ (again with some abuse of notation), we have $\Phi(<_{0})\approx_{1\times 1} \Phi(\prec)$.  
%Since $\Phi(<_{0})(1)$ and $\Phi(\prec)(1)$ are infinite, they must be suborder of $<_{0}$ and $\prec$ respectively.      
Now take standard $n_{0}\in \Phi(<_{0})(1)$ (which exist by the standardness of $\Xi$ and also satisfies $n_{0}\in \Phi(\prec)(1)$ by standard extensionality) 
and consider the standard number $\Phi(<_{0})(2)(n_{0})=_{0}\Phi(\prec)(2)(n_{0})$, the latter equality again by standard extensionality.  
However, by the infinitude of $\Phi(<_{0})(1)$ (resp.\ of $\Phi(\prec)(1)$) only the second (resp.\ first) disjunct of \eqref{dark} can hold for $<_{0}$ (resp.\ for $\prec$).    
Then, the second (resp.\ first) disjunct of \eqref{dark} for $<_{0}$ (resp.\ $\prec$) implies $n_{0}\geq_{0}\Phi(<_{0})(2)(n_{0})$ (resp.\ $n_{0}\preceq\Phi(\prec)(2)(n_{0})$).  
Since all objects are standard, we obtain $n_{0}=_{0}\Phi(<_{0})(2)(n_{0})=_{0}\Phi(\prec)(2)(n_{0})$.  However, then $\Phi(<_{0})(1)\approx_{1}\Phi(\prec)(1)$ is impossible 
as the `overlap' between the latter two orders is a singleton, namely $\{n_{0}\}$.  
\end{proof}
In \cite{schirfeld}*{Prop.\ 2.9}, a uniform proof of \textsf{CADS} from \textsf{CRT}$_{2}^{2}$, a cohesive version of \RT$_{2}^{2}$, is presented.  
Hence, it follows that the (fully) uniform version of \textsf{CRT}$_{2}^{2}$ is also equivalent to $(\exists^{2})$.  Finally, we discuss a connection between our results and \cite{bennosam}.    
\begin{rem}[Alternative approach]\rm
The above \emph{non-explicit} results can also be obtained in a different way:  It is established in \cite{bennosam}*{Cor.\ 12} that $(\exists^{2})\asa \paai$ over a suitable (nonstandard) base theory.  The essential ingredient in the latter system
is \emph{parameter-free Transfer} \textsf{PF-TP}$_{\forall}$, i.e.\ Nelson's axiom \emph{Transfer} (See Section~\ref{base}) where the formulas $\varphi$ have no parameters.  In contrast to \emph{Transfer}, \emph{parameter-free Transfer} does not carry any logical strength.  
However, the principle $\UDNR$ is parameter-free, implying that the functional $\Psi$ from the former is \emph{standard}, assuming $\textsf{PF-TP}_{\forall}$.  Similarly, the principle `There is $\Psi, \Xi$ such that $\UDNR(\Psi)$ and $\Xi$ is an extensionality functional for $\Psi$' does not have any parameters, and $\textsf{PF-TP}_{\forall}$ yields that $\Psi$ and $\Xi$ are standard.  However, the standardness of $\Xi$ \emph{also implies that $\Psi$ is standard extensional}.  Hence, Theorem \ref{UDNRPLUS} and \cite{bennosam}*{Cor.\ 12} immediately yield that $\UDNR\asa \paai\asa (\exists^{2})$, assuming $\textsf{PF-TP}_{\forall}$.  These equivalences were proved in a conservative extension of $\RCAo$, implying that the latter proves $(\exists^{2})\asa \UDNR$.      
\end{rem}
\subsection{Classifying the strong Tietze extension theorem}\label{ctietze}
In this section, we study a uniform version of the Tietze (extension) theorem.  Non-uniform versions of the Tietze theorem are studied in \cite{simpson2}*{II.7} and \cite{withgusto}.  
We are interested in the `strong' Tietze theorem \cite{withgusto}*{6.15.(5)} since it implies $\DNR$ and is implied by $\WKL$ (See \cite{withgusto}*{\S6}).  Furthermore, Montalb\'an lists the status of the Tietze theorem as an open question in Reverse Mathematics in \cite{montahue}*{Question 16}.  We will establish an explicit equivalence between $(\mu^{2})$ (and hence $\UWKL$ by \cite{kohlenbach2}*{\S3}) and the uniform strong Tietze theorem.  % and a uniform continuity principle.  
We make essential use of Corollary \ref{essek}.         

\medskip

%Since our interest is in uniform versions of theorems from Reverse Mathematics, we shall use the definition of continuity employed there.  
%In particular, continuity is defined as in \cite{simpson2}*{II.6.1} or \cite{withgusto}*{Def.\ 2.7}, and we denote by $\textup{C}_{\rm}[0,1]$ the Banach space of continuous functions on $[0,1]$ from \cite{simpson2}*{II.10.3}.  
First of all, since the Tietze theorem from \cite{withgusto}*{6.15.(5)} is about uniformly continuous functions with a modulus, 
it does not really matter which definition of continuity is used by \cite{kohlenbach4}*{Prop.\ 4.4}.  
Thus, let $f^{1}\in C_{\textup{rm}}(X)$ mean that $f$ is continuous in the sense of Reverse Mathematics on $X$, i.e.\ as in \cite{simpson2}*{II.6.1} or \cite{withgusto}*{Def.~2.7}.  
Furthermore, let $\mathscr{C}(X)$ be the Banach space used in the Tietze theorem  \cite{withgusto}*{6.15.(5)} as defined in \cite{withgusto}*{p.\ 1454}.  
%Similarly, we write `$(f,g)\in N(C)$' if $g$ additionally depends on $x$ in \eqref{kuroi}.   
Finally, we use the same definition for closed and separably closed sets as in \cite{withgusto}.  
\begin{princ}[\textsf{UTIE}]
There is a functional $\Psi^{ (1\times 1)\di 1}$ such that for closed and separably closed sets $A\subseteq [0,1]$ and for $f\in C_{\textup{rm}}(A)$ with modulus of uniform continuity $g$, we have $\Psi(f,g,A)\in \mathscr{C}([0,1])$ and $f$ equals $\Psi(f,g,A)$ on $A$.   
\end{princ}
We also study the following uniform version of Weierstra\ss' (polynomial) approximation theorem.  The non-uniform version is equivalent to $\WKL$ by \cite{simpson2}*{IV.2.5}  
\begin{princ}[\textsf{UWA}]  There is $\Psi^{1\di 1}$ such that
\[\textstyle
(\forall f\in C_{\textup{rm}}[0,1])(\forall x^{1}\in [0,1], n^{0})\big[ \Psi(f)(n) \in \textup{POLY}\text{} \wedge |f(x)-\Psi(f)(n)(x)| <\frac{1}{2^{n}}  \big].
\]
\end{princ}
%Note that by \cite{simpson2}*{IV.2.3} and \cite{kohlenbach4}*{\S4}, we could limit UMOD to functions which only have a modulus of pointwise continuity.   
\begin{thm}
In $\RCAo$, we have the explicit equivalences $\textup{\textsf{UWA}}\asa \textup{\textsf{UTIE}}\asa (\mu^{2})$.  % and these equivalences are explicit.  
\end{thm}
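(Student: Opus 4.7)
The plan is to establish the two equivalences by combining the template of the preceding sections with existing (uniform) proofs from the literature. The reverse implications $(\mu^{2})\di \textup{\textsf{UWA}}$ and $(\mu^{2})\di \textup{\textsf{UTIE}}$ are the easy direction: under $\mu^{2}$ we have full arithmetical comprehension, so the constructive proofs of the Weierstra\ss{} approximation theorem and of the strong Tietze extension theorem from ordinary Reverse Mathematics can be unwound to produce explicit terms yielding the required functionals. In particular, $\mu^{2}$ lets us effectively perform the unbounded searches needed to construct Bernstein polynomial approximations and to assemble the Tietze extension from its defining series on the complement of the separably closed set.

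For the implication $\textup{\textsf{UWA}}\di (\mu^{2})$, I would exploit the classical equivalence between Weierstra\ss{} approximation and $\textup{\textsf{WKL}}$ from \cite{simpson2}*{IV.2.5}. A careful reading of that proof shows that both directions are uniform: from a functional providing polynomial approximations to any continuous function one extracts a functional solving $\textup{\textsf{WKL}}$, and conversely. Combined with Kohlenbach's explicit equivalence $\textup{\textsf{UWKL}}\asa (\exists^{2})$ from \cite{kohlenbach2}*{\S3}, this yields the explicit equivalence $\textup{\textsf{UWA}}\asa (\mu^{2})$.

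For $\textup{\textsf{UTIE}}\di (\mu^{2})$, I would use the proof in \cite{withgusto}*{\S6} that the strong Tietze theorem implies $\textup{\textsf{DNR}}$. That proof constructs, uniformly in an oracle $A$, a closed and separably closed set $A'\subseteq [0,1]$ together with a continuous function $f_{A}$ with modulus of uniform continuity $g_{A}$, such that any continuous extension of $f_{A}$ to all of $[0,1]$ codes a function diagonally non-recursive in $A$. Reading this construction as a term in the language, the functional $\Psi$ from $\textup{\textsf{UTIE}}$ yields by composition a functional witnessing $\textup{\textsf{UDNR}}$, giving an explicit implication $\textup{\textsf{UTIE}}\di \textup{\textsf{UDNR}}$; Corollary~\ref{essek} then closes the loop.

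The main obstacle is verifying that the cited proofs from \cite{simpson2} and \cite{withgusto} are as uniform as claimed, i.e.\ that no hidden case analysis requires $(\exists^{2})$ beyond what the functional parameter itself already provides. Should uniformity fail at some step, one can instead follow the template of this paper literally: introduce the `$+$' versions $\textup{\textsf{UWA}}^{+}$ and $\textup{\textsf{UTIE}}^{+}$ (adding standardness of the witnessing functional together with standard extensionality), prove $\textup{\textsf{UWA}}^{+},\textup{\textsf{UTIE}}^{+}\di \paai$ inside $\RCAO$ by the usual perturbation trick (perturbing the input continuous function, its modulus, or the separably closed set at nonstandard indices encoded through a putative counterexample $h$ to $\paai$, and exploiting standard extensionality of $\Psi$ to derive a contradiction with the approximation/extension property at an infinite argument), and then invoke algorithms $\mathcal{B}$ and $\mathcal{A}$ together with Corollary~\ref{consresultcor} to extract the required explicit terms.
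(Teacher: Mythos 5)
Your handling of $\textup{\textsf{UTIE}}\di(\mu^{2})$ matches the paper exactly: both exploit the uniformity of the proof that the strong Tietze theorem implies $\textup{\textsf{DNR}}$ from \cite{withgusto}*{\S6}, obtain $\textup{\textsf{UTIE}}\di\textup{\textsf{UDNR}}$, and finish with Corollary~\ref{essek}. Likewise, the $(\mu^{2})\di\textup{\textsf{UWA}}$ direction via Bernstein polynomials is what the paper does.

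Where you diverge is $\textup{\textsf{UWA}}\di(\mu^{2})$. The paper does not attempt this implication directly; it closes a triangle $(\mu^{2})\di\textup{\textsf{UWA}}\di\textup{\textsf{UTIE}}\di(\mu^{2})$, obtaining the missing explicit implication $\textup{\textsf{UWA}}\di\textup{\textsf{UTIE}}$ by composing the functional from $\textup{\textsf{UWA}}$ with the \emph{effective} Tietze extension that Simpson proves already in $\textup{\textsf{RCA}}_0$ (\cite{simpson2}*{II.7.5}). You instead propose to uniformize the reversal in \cite{simpson2}*{IV.2.5} to get $\textup{\textsf{UWA}}\di\textup{\textsf{UWKL}}$. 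This is a genuinely different decomposition, and you correctly flag that its validity hinges on the reversal being uniform -- but that is precisely the point you leave unverified, and it is not obviously so: the standard reversal of Weierstra\ss\ approximation typically proceeds by contradiction (an infinite tree without a path yields an unbounded continuous function which cannot be polynomially approximated), and such a proof need not hand you a \emph{functional} producing a path from the polynomial approximations. The paper's route is safer and shorter because Simpson's \emph{effective} Tietze theorem is already constructive in $\textup{\textsf{RCA}}_0$, so composing it with $\textup{\textsf{UWA}}$'s functional requires no reversal at all. Your fallback option (building $\textup{\textsf{UWA}}^{+}$, $\textup{\textsf{UTIE}}^{+}$ and running the $\paai$ perturbation trick plus algorithms $\mathcal{B}$ and $\mathcal{A}$) is a sound plan B and in the spirit of the rest of the paper, though it is not what the paper does for this particular theorem, which is argued by direct (internal) effective reasoning rather than via the nonstandard template.
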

\begin{proof}
As in the proof of \cite{kohlenbach2}*{Prop.\ 3.14}, it is straightforward to obtain \textsf{UWA} using $(\exists^{2})$ from the associated non-uniform proof, even when $f$ is a type $1\di 1$ functional which 
happens to be $\eps$-$\delta$-continuous.  
 % of the Weiestra\ss~approximation theorem from $\WKL$.   
%Let UTEI$'$ be UTEI limited to functions continuous in the sense of Reverse Mathematics (\cite{simpson2}*{II.6.1})
Indeed, it is well-known that  $\lim_{n\di \infty}B_{n}(f)(x)=f(x)$ uniformly for $x\in [0,1]$, if $f$ is continuous on $[0,1]$ and $B_{n}(f)$ are the associated Bernstein polynomials (\cite{natan1}*{p.\ 6}).  
Using $(\exists^{2})$ it is then easy to define $\Psi(f)(n)$ as the least $N$ such $\sup_{x\in [0,1]} |B_{N}(f)(x)-f(x)|\leq \frac{1}{2^{2n+2}} $.   

\medskip

For the explicit implication \textsf{UTIE} $\di (\exists^{2})$, we will use of Corollary~\ref{essek} and \cite{withgusto}*{\S6}.  
In particular, we will `uniformise' the proof of \cite{withgusto}*{Lemma 6.17}.

\medskip

First of all, by \cite{withgusto}*{Lemma 6.17}, the strong Tietze theorem \cite{withgusto}*{6.15.(5)} implies \DNR.   
In this proof, a function $f$ defined on a set $C$ is constructed in $\RCA_{0}$ (See the proof of \cite{withgusto}*{Lemma 6.16}).  
This function satisfies all conditions of the strong Tietze theorem; In particular, it has a modulus of uniform continuity of $f$.
Applying \cite{withgusto}*{6.15.(5)}, one obtains $F\in \mathscr{C}[0,1]$,  
an extension of $f$ to $[0,1]$.  

\medskip

Secondly, by the definition of $\mathscr{C}(X)$ from \cite{withgusto}*{p. 1454},  $F$ is coded by a sequence of polynomials $p_{n}$ such that $\|p_{n}-F\|<\frac{1}{2^{2n+2}}$, and we can define $h(n):=\sharp(p_{n})$.  
The latter is then such that $(\forall e^{0})(h(e)\ne \Phi_{e}(e))$.  The case of $\DNR$ where $A\ne \emptyset$ is then straightforward.  Indeed, the initial function $f$ (from the proof of \cite{withgusto}*{Lemma~6.16}) is defined using a recursive counterexample to the Heine-Borel lemma.  
Such a counterexample can be found in \cite{simpson2}*{I.8.6} and clearly relativizes (uniformly) to any set $A$.  Let us use $f_{A}$ to denote the function $f$ obtained from the previous construction relative to the set $A$, and
let $C_{A}$ and $g_{A}$ be the relativized domain and modulus.      
Now let $\Psi$ be the functional from $\textsf{UTEI}$ and define 
$\Xi^{1\di 1}$ by 
\[
\Xi(A):=\sharp\big(\Psi(f_{A},g_{A}, C_{A})\big), 
\]
where $f_{A}$, $g_{A}$, and $C_{A}$ are as in the previous paragraph of this proof.  % and $g_{A}$ its modulus of uniform continuity.     
In the same way as in the proof of \cite{withgusto}*{Lemma 6.17}, one proves that for any $A^{1}$, we have $(\forall e^{0})(\Xi(A)(e)\ne \Phi_{e}^{A}(e))$.  
However, this yields the explicit implication $\textsf{UTIE}\di \UDNR$ and the latter explicitly implies $(\mu^{2})$ by Corollary \ref{essek}.  

\medskip

Next, to prove the explicit implication \textsf{UWA} $\di$ \textsf{UTIE}, note that Simpson proves an effective version of the Tietze theorem in \cite{simpson2}*{II.7.5}.  
Following the proof of the latter, it is clear that there is a functional $\Phi$ in $\RCAo$ such that for closed and separably closed $A$ and $f\in C_{\textup{rm}}(A)$, the image $\Phi(f, g,A)\in {C}_{\textup{rm}}[0,1]$ is the extension of $f$ to $[0,1]$ provided by \cite{simpson2}*{II.7.5}.  For $\Psi$ as in \textsf{UWA}, the functional $\Psi(\Phi(f,g,A))$ is as required by \textsf{UTIE}.  % and hence $(\exists^{2})$ follows by the above.  
\end{proof}
Let \textsf{UTIE}$'$ and \textsf{UWA}$'$ be the versions of \textsf{UTIE} and \textsf{UWA} with the usual epsilon-delta definition of continuity instead of the Reverse Mathematics definition of continuity.  
The following corollary is immediate from the proof of the theorem.  
\begin{cor}
In $\RCAo$, we have $\textup{UWA}'\asa \textup{UTIE}'\asa (\exists^{2})$.
\end{cor}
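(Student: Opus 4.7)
The plan is to verify that each of the three implications in the preceding theorem remains valid when the RM-style notion of continuity is replaced by the $\eps$-$\delta$ version, under the understanding (standard in Kohlenbach's framework) that $\textup{UTIE}'$ and $\textup{UWA}'$ take as input a continuous function packaged together with its modulus of uniform continuity. For the reverse implication $(\exists^{2})\di \textup{UWA}'$, I would recycle the Bernstein polynomial construction from the proof of the theorem, which is already explicitly flagged there as applying verbatim to any $\eps$-$\delta$-continuous type $1\di 1$ functional: given $(\exists^{2})$, one searches for the least $N$ with $\sup_{x\in [0,1]}|B_{N}(f)(x)-f(x)|\leq \frac{1}{2^{2n+2}}$ and sets $\Psi(f)(n):=B_{N}(f)$.

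For $\textup{UWA}'\di \textup{UTIE}'$, I would compose Simpson's effective Tietze extension functional from \cite{simpson2}*{II.7.5}, built in $\RCAo$ from any pair $(f,g)$ of a continuous function on a closed, separably closed set together with its modulus $g$, with the polynomial approximator supplied by $\textup{UWA}'$; since this construction is uniform in the pair $(f,g)$ and indifferent to which notion of continuity is in play provided a modulus is supplied, the argument of the theorem transfers without change. The step that requires the most care is $\textup{UTIE}'\di (\exists^{2})$: I would re-run the argument of \cite{withgusto}*{Lemma~6.17} in the uniform form already used in the theorem, observing that the function $f_{A}$ constructed there comes equipped with an explicit modulus of uniform continuity $g_{A}$ on the closed, separably closed domain $C_{A}$. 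Since uniform continuity implies $\eps$-$\delta$-continuity, the triple $(f_{A},g_{A},C_{A})$ is a legitimate input to $\textup{UTIE}'$, and defining $\Xi(A):=\sharp(\Psi(f_{A},g_{A},C_{A}))$ yields a functional witnessing $\textup{UDNR}$; Corollary~\ref{essek} then upgrades this explicitly to $(\exists^{2})$.

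The main obstacle here is conceptual rather than computational: one must fix the representation of $\eps$-$\delta$-continuity so that both $\textup{UTIE}'$ and $\textup{UWA}'$ take as input a function packaged with its modulus of uniform continuity, and so that the $f_{A}$ of Lemma~6.17 and the Bernstein approximants interact correctly with the standard/nonstandard quantifiers. Once this representation is pinned down, the explicitness of all three implications is inherited directly from the theorem, producing the claimed equivalences between $\textup{UWA}'$, $\textup{UTIE}'$, and $(\exists^{2})$.
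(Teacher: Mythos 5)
Your proposal is correct and follows essentially the route the paper intends: the paper's own proof of this corollary is simply the remark that it is ``immediate from the proof of the theorem,'' and each of your three verifications matches one of the theorem's three implications with the coding-continuity replaced by $\eps$-$\delta$-continuity. You correctly identify that the Bernstein polynomial search already handles $\eps$-$\delta$-continuous $f$ (the paper flags this explicitly), that the Giusto-Simpson construction of $f_A$ supplies a modulus so the input is legitimate for $\textup{UTIE}'$, and that Simpson's effective Tietze extension composed with $\textup{UWA}'$ transfers once a modulus is available; the one ingredient you invoke informally (``standard in Kohlenbach's framework'') is made precise in the paper by the citation of \cite{kohlenbach4}*{Prop.\ 4.4}, which is why the shift between the two notions of continuity is harmless precisely in the presence of a modulus of uniform continuity.
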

In the proof of the theorem, we established \textsf{UTIE} $\di (\exists^{2})$ by showing that \textsf{UTIE} $\di$ \textsf{UDNR}, and then applying Corollary~\ref{essek}.  
The latter implication goes through because of the uniformity of the proof of $\DNR$ from the strong Tietze theorem (See \cite{withgusto}*{Lemma~6.17}).

\begin{rem}[The role of extensionality]\label{equ3}\rm
At the risk of stating the obvious, the axiom of extensionality is central in proving all above equivalences;  In particular, half of the explicit implications obtained above all have an extensionality functional `buit-in'.  
%he fact that functionals with a defining sentence (like those originating from the RM zoo) are \emph{standard extensional}, is essential for the equivalence to $(\exists^{2})$ of uniform versions of RM zoo principles.  
Hence, an approach to uniform computability \emph{not involving the axiom of extensionality} will yield different results.  It is a matter of opinion whether in the latter such `non-extensional framework', the glass is half-full (finer distinctions) or half-empty (more complicated picture).  In our opinion, it is remarkable how uniform our uniform classification has turned out.    
\end{rem}
\section{Taming the future Reverse Mathematics zoo}\label{GT}
In this secton, we formulate a general template for obtaining (explicit) equivalences between $(\mu^{2})$ and uniform versions of principles from the RM zoo.  
\subsection{General template}\label{tempie}  Our template is defined as follows.
\begin{tempo*}\rm
Let $T\equiv (\forall X^{1})(\exists Y^{1})\varphi(X,Y)$ be a  RM zoo principle and let $UT$ be $(\exists \Phi^{1\di 1})(\forall X^{1})\varphi(X,\Phi(X))$.  
To prove the \emph{explicit} implication $ UT\di (\mu^{2})$, execute the following steps:   
\begin{enumerate}[(i)]
%\item If necessary, remove remaining existential quantifiers from $\varphi(X,\Phi(X))$ (See e.g.\ Corollary \ref{cootje}) using $\Pi_{1}^{0}\textup{-UAC}_{1,0}$ or similar axioms.
\item Let $UT^{+}$ be $(\exists^{\st} \Phi^{1\di 1})(\forall^{\st} X^{1})\varphi(X,\Phi(X))$ where the functional $\Phi$ is additionally standard extensional.  We work in $\RCA_{0}^{\Lambda}+UT^{+}$.
\item Suppose the standard function $h^{1}$ is such that $(\forall^{\st}n)h(n)=0$ and $(\exists m)h(m)\ne0$, i.e.\ $h$ is a counterexample to $\paai$.  
\item For standard $V^{1}$, use $h$ to define standard $W^{1}\approx_{1} V$ such that $\Phi(W)\not\approx_{1}\Phi(V)$, i.e.\ $W$ is $V$ with the nonstandard elements changed sufficiently to yield a different image under $\Phi$.  \label{itemf}
\item The previous contradiction implies that $\RCA_{0}^{\Lambda}$ proves $UT^{+}\di \paai$.
\item Bring the implication from the previous step into the normal form\\ $(\forall^{\st}x)(\exists^{\st}y)\psi(x,y)$ ($\psi$ internal) using the algorithm $\mathcal{B}$ from Remark \ref{algob}.  \label{frink}
\item Apply the term extraction algorithm $\mathcal{A}$ using Corollary \ref{consresultcor}.  The resulting term yields the explicit implication $UT\di (\mu^{2})$.  \label{frink2}
\end{enumerate}  
The explicit implication $(\mu^{2})\di UT$ is usually straightforward;  Alternatively, establish $\paai\di UT^{+}$ in $\RCA_{0}^{\Lambda}$ and apply steps \eqref{frink} and \eqref{frink2}.  
%In item \eqref{itemf}, if $T$ asserts the existence of infinite $Y$, the function $h$ can be used to make $\Phi(W)$ bounded by an infinite number (but not by a finite number).   
\end{tempo*}
The algorithm $\RS$ is defined as the steps \eqref{frink} and \eqref{frink2} in the template, i.e.\ the application of the algorithms $\mathcal{B}$ and $\mathcal{A}$ to suitable implications.  
In Section \ref{robu}, we speculate why uniform principles $UT$ originating from RM zoo-principles are equivalent to $(\exists^{2})$ \emph{en masse}.  We conjecture a connection to Montalb\'an's notion of \emph{robustness} from \cite{montahue}.    

\medskip

Finally, the above template treats zoo-principles in a kind of `$\Pi_{2}^{1}$-normal form', for the simple reason that most zoo-principles are formulated in such a way.  
Nonetheless, it is a natural question, discussed in Section \ref{DX}, whether principles \emph{not} formulated in this normal form gives rise to uniform principles not equivalent to $(\exists^{2})$.    
Surprisingly, the answer to this question turns out to be negative.  

\subsection{Robustness and structure}\label{robu}\rm
In this section, we try to explain why our template works so well for RM zoo principles.  We conjecture a connection to Montalb\'an's notion of \emph{robustness} from \cite{montahue}.      

\medskip

First of all, standard computable functions are determined by their behaviour on the standard numbers (by the \emph{Use principle} from \cite{zweer}*{p.\ 50}), while e.g.\ a standard Turing machine may 
well halt at some infinite number (given e.g.\ the fan functional from \cite{kohlenbach2} or $\neg\paai$), i.e.\ non-computable problems, like the Halting problem for standard Turing machines, are not necessarily determined by the standard numbers.    

\medskip

Now in step \eqref{itemf}, the assumption $\neg\paai$ allows us to change the nonstandard part of a standard set $V^{1}$, resulting in standard $W^{1}\approx V$.  Since $\Phi(V)$ (resp.\ $\Phi(W)$) is not computable from $V$ (resp.\ $W$), the former depends on the nonstandard numbers in $V$ (resp.\ $W$).  However, making the nonstandard parts of $V$ and $W$ different enough, 
we can guarantee $\Phi(W)\not\approx_{1} \Phi(V)$, and obtain a contradiction with standard extensionality.  Hence, $\paai$ follows and so does $UT\di (\exists^{2})$.         
Alternatively, as noted in Remark \ref{dikko}, we can define standard $V$ and \emph{nonstandard} $W$ such that $V\approx_{1}W\wedge \Phi(V)\not\approx_{1}\Phi(W)$ without assuming $\neg\paai$.  
Hence $\Phi$ is not \emph{nonstandard} continuous and Kohlenbach has pointed out that a discontinuous function can be used to define $(\exists^{2})$ using \emph{Grilliot's trick} (See \cite{kohlenbach2}*{Prop.\ 3.7}).

\medskip

Secondly, note that step \eqref{itemf} crucially depends on the fact that we can modify the nonstandard numbers in the set $V$ \emph{without changing the standard numbers}, i.e.\ while guaranteeing $V\approx_{1} W$.  Such a modification is only possible for structures which are \emph{not} closed downwards:  For instance, our template will fail for the \emph{fan theorem} (See Section \ref{DX}), as the latter deals with (finite) binary trees, which are closed downwards.     
%In particular, the above approach clearly does not work for theorems concerned with trees, like e.g.\ the fan theorem.  
Of course, many of the zoo-principles have a distinct combinatorial flavour, which implies that the objects at hand exhibit little structure.  
Furthermore, as noted in Remark \ref{dikko}, this absence of structure directly translates into a (nonstandard) discontinuity in the input parameter $X$ in $(\forall X^{1})(\exists Y^{1})\varphi(X,Y)$.  

\medskip

Thirdly, in light of this absence of structure in principles of the RM zoo, we conjecture that \emph{robust} theorems (in the sense of \cite{montahue}*{p.\ 432}) are (exactly) 
those which deal with \emph{mathematical objects with lots of structure} like trees, continuous functions, metric spaces, et cetera.  These theorems are also (exactly) those which are continuous in their input parameter(s).      
In particular, the presence of this structure `almost guarantees' a place in one of the Big Five categories.  
The non-robust theorems, by contrast, deal with objects which exhibit little structure (and hence can be discontinuous in their input parameters), and for this reason \emph{have the potential} to fall outside the Big Five and in the RM zoo.  
However, as we observed in the previous paragraph, the absence of structure in RM zoo principles, is exactly what makes our template from Section \ref{tempie} work.  

\medskip

In conclusion, what makes the \emph{principles in the \textup{RM} zoo exceptional} (namely the presence of little structure on the objects at hand) guarantees that the \emph{uniform versions of the \textup{RM} zoo principles are non-exceptional} (due to the fact that the above template works form them).  

\section{Converse Mathematics}\label{DX}
In this section, we classify the uniform versions of the \emph{contrapositions} of zoo-principles.
This study is motivated by the question whether the template from Section~\ref{tempie} `always' works, i.e.\ perhaps we can find 
counterexamples to this template by studying \emph{contrapositions} of zoo-principles, as these do not necessarily have a $\Pi_{2}^{1}$-structure?  We first discuss this motivation in detail.    

\medskip

First of all, the weak K\"onig's lemma (\WKL) is rejected in all varieties of constructive mathematics, while the (classical logic) contraposition of \WKL, called the \emph{fan theorem} is accepted in Brouwer's intuitionistic mathematics (See e.g.\ \cite{brich}*{\S5}).  
This difference in constructive content is also visible at the uniform level:  The uniform version of WKL satisfies the template from the previous section, and is indeed equivalent to arithmetical comprehension, while the uniform version of the fan theorem is not stronger 
than $\WKL$ itself. (See \cites{firstHORM, kohlenbach2}).
Hence, we observe that, from the constructive and uniform point of view, a principle can behave rather differently compared to its contraposition.    

\medskip

Secondly, the template from Section \ref{tempie} would seem to work for any $\Pi_{2}^{1}$-zoo principle $T\equiv (\forall X^{1})(\exists Y^{1})\varphi(X,Y)$ and the associated `obvious' uniform version $UT\equiv (\exists \Phi^{1\di 1})(\forall X^{1})\varphi(X,\Phi(X))$.  
Nonetheless, while $UT$ is the \emph{most natural} uniform version of $T$ (in our opinion), there sometimes exists an \emph{alternative} uniform version of $T$, similar to the uniform version of the fan theorem.  
With regard to examples, the principle $\ADS$ from Section \ref{padis} is perhaps the most obvious candidate, while various Ramsey theorems can also be recognised as suitable candidates.  % after some deliberation.  

\medskip

In conclusion, it seems worthwhile investigating the uniform versions of contra-posed zoo-principles, inspired by the difference in behaviour of the fan theorem and weak K\"onig's lemma.  
However, somewhat surprisingly, we shall only obtain principles equivalent to arithmetical comprehension, i.e.\ our study will not yield exceptions to our observation that the RM zoo disappears at the uniform level.  % \cite{samzoo}*{\S5}.     
  
\subsection{The contraposition of $\ADS$}\label{CONTRADS}
In this section, we study the uniform version of the \emph{contraposition} of \ADS.  Recall that $\ADS$ states that every infinite linear order either has an ascending or a descending chain.  
Hence, the contraposition of $\ADS$ is the statement that if a linear order has no ascending and descending sequences, then it must be finite, as follows:
\begin{align}
(\forall X^{1})\big[\textup{LO}(X)\wedge & (\forall x_{(\cdot)}^{1}\in \textup{Seq}(X))(\exists n^{0}, k^{0})(x_{n}\leq_{X} x_{n+1} \wedge x_{k}\geq_{X} x_{k+1})\notag\\
&\di (\exists l^{0}, k^{0}\in \field{(X)})(\forall m^{0}\in \field(X))(k\leq_{X}m\leq_{X} l)\big].\label{kund}
\end{align}
%In the antecedent, we used the notation $f^{1}\textup(X)$ for $(\forall n)(f(n)\in \field(X))$.  
%
%\medskip
%
By removing \emph{all} existential quantifiers, we obtain the following alternative uniform version of \ADS.
\begin{princ}[$\UADS_{2}$]
There is $\Phi^{3}$ such that for all linear orders $X^{1}$ and $g^{2}$
\begin{align}
 (\forall x_{(\cdot)}^{1}\in \textup{Seq}(X))&(\exists n^{0}, k^{0}\leq g(x_{(\cdot)}))(x_{n}\leq_{X} x_{n+1} \wedge x_{k}\geq_{X} x_{k+1})\notag\\
&\di (\forall m^{0}\in \field(X))(\Phi(X,g))(1)\leq_{X}m\leq_{X} \Phi(X,g)(2)).  \label{gohan}
\end{align}
\end{princ}
\begin{thm}\label{ravvy}
In $\RCAo$, we have the explicit equivalence $\UADS_{2}\asa (\mu^{2})$.  
\end{thm}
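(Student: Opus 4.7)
The plan is to execute the template of Section~\ref{tempie}. The implication $(\mu^{2})\di \UADS_{2}$ is easy: using $(\exists^{2})$ one can decide whether the premise of \eqref{gohan} holds for given $X,g$, and if it does, the contrapositive of $\ADS$ (available in $\ACA_{0}$) guarantees that $\field(X)$ is finite, so $\mu$ locates its least and greatest elements; otherwise $\Phi(X,g)$ can be set arbitrarily.

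For the main direction $\UADS_{2}\di (\mu^{2})$, I introduce $\UADS_{2}^{+}$ as $\UADS_{2}$ with $\Phi$ standard and standard extensional in both arguments, and I work in $\RCAO+\UADS_{2}^{+}$. Assume $h^{1}$ is a standard counterexample to $\paai$ and let $m_{0}$ denote the least $m$ with $h(m)\ne 0$, necessarily nonstandard. Define two standard linear orders: $V$, the trivial order on $\{0\}$; and $W_{h}$, whose field is $\{n: n=0\vee (h(n)\ne 0 \wedge (\forall k<n)\, h(k)=0)\}$ with the ordering induced from $<_{0}$. By construction $\field(W_{h})=\{0,m_{0}\}$, and since $h(n)=0$ for every standard $n$ we have $V\approx_{1}W_{h}$. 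Let $g$ be the constant functional $g\equiv 1$; since $\field(W_{h})$ has at most two elements, a direct case check on any $x_{0},x_{1},x_{2}$ in the field shows that both an ascending and a descending pair occur within the first two indices, so the premise of \eqref{gohan} holds for both $V$ and $W_{h}$ with this $g$. Applying $\UADS_{2}^{+}$, we obtain $\Phi(V,g)(2)=0$ (the only element of $\field(V)$), while $\Phi(W_{h},g)(2)$ must be an upper bound of $\field(W_{h})$ lying in $\field(W_{h})$, hence equal to $m_{0}$ and therefore nonstandard. But standard extensionality of $\Phi$ combined with $V\approx_{1}W_{h}$ forces $\Phi(V,g)\approx_{1}\Phi(W_{h},g)$, in particular equality at the standard index $2$, yielding the contradiction $0=m_{0}$. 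This establishes $\UADS_{2}^{+}\di \paai$ in $\RCAO$.

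The main obstacle is step (iii) of the template: unlike for $\UADS$ in Theorem~\ref{JUDAS}, the principle $\UADS_{2}$ carries an extra universal quantifier over the bound functional $g^{2}$, so the order $W_{h}$ constructed from $h$ must validate the premise of \eqref{gohan} with a single standard $g$; the two-element field above is chosen precisely so that a constant $g$ suffices. Once $\UADS_{2}^{+}\di \paai$ is in hand, the remaining steps are mechanical: apply the normal-form algorithm $\mathcal{B}$ of Remark~\ref{algob} to reformulate the implication in the shape $(\forall^{\st}x)(\exists^{\st}y)\psi(x,y)$ with internal $\psi$, then invoke term extraction via algorithm $\mathcal{A}$ using Corollary~\ref{consresultcor} to read off the explicit term realising $\UADS_{2}\di (\mu^{2})$.
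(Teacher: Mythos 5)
Your forward direction is correct and follows the paper's template, but the construction you use is cleverer and more self-contained than the one in the paper. The paper's proof fixes an arbitrary standard $X_{0}\ne\emptyset$ and $g_{0}$ satisfying the antecedent, builds a constant bound functional $h_{0}$ valued $\Phi(X_{0},g_{0})(1)+\Phi(X_{0},g_{0})(2)+4$, verifies the antecedent for $X_{0},h_{0}$ via a counting argument on the finite order, and then enlarges $X_{0}$ to a standard $Y_{0}$ by attaching a ``point at infinity'' $m_{0}$; the extra preparatory step with $h_{0}$ is there precisely because an arbitrary $X_{0}$ forces one to re-certify the bound after adding $m_{0}$. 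You sidestep all of this by taking the two simplest possible inputs, $V$ on $\{0\}$ and $W_{h}$ on $\{0,m_{0}\}$, for which the constant bound $g\equiv 1$ validates the premise by a three-case pigeonhole check on $x_{0},x_{1},x_{2}\in\{0,m_{0}\}$. The resulting contradiction $\Phi(V,g)(2)=0=m_{0}=\Phi(W_{h},g)(2)$ via standard extensionality is clean and correct; the two proofs have the same skeleton but yours cuts the bookkeeping. You also correctly note the extra wrinkle relative to $\UADS^{+}$, namely that the universal quantifier over $g$ requires the constructed orders to validate the premise with a \emph{single} standard $g$, which is exactly why the two-element field is chosen.

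One imprecision to watch in the reverse direction: the premise of \eqref{gohan} is $\Pi^{1}_{1}$, not arithmetical, so $(\exists^{2})$ does \emph{not} let you ``decide whether the premise holds''. You do not need to decide it: define $\Phi(X,g)$ unconditionally by a $\mu$-search for the least pair $\langle k,l\rangle$ with $k,l\in\field(X)$ and $(\forall m\in\field(X))(k\leq_{X}m\leq_{X}l)$ (a $\Pi^{0}_{1}$ matrix, so within reach of $\mu$), returning a default value if the search fails. If the premise holds, $\ADS$ (provable from $(\mu^{2})$) guarantees such a pair exists, so the search succeeds and the consequent of \eqref{gohan} is met; if the premise fails, the implication is vacuous and the output is irrelevant. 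This is what the paper does, and your argument should be phrased this way rather than as a case distinction on a non-arithmetical property.
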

\begin{proof}
The reverse direction is immediate since $(\mu^{2})$ implies $\ADS$ and the upper and lower bounds to $\leq_{X}$ in the consequent of \eqref{kund} can be found using the same search operator.   
 %$(\mu^{2})$, and the latter is equivalent to $(\exists^{2})$ by \cite{kohlenbach2}*{\S3}.  
 For the forward direction, we shall apply the template from Section \ref{tempie}.  %work in $\RCAO$ which is a conservative extension of $\RCAo$ for the internal language.  
Hence, let \UADS$_{2}^{+}$ be as in the template and fix standard $X_{0}\ne \emptyset$ and $g_{0}$ such that the antecedent of \UADS$_{2}^{+}$ holds.   
Then $\Phi(X_{0}, g_{0})$ is standard and consider the standard function $h^{2}_{0}$ which is constant and always outputs $\Phi(X_{0}, g_{0})(1)+\Phi(X_{0}, g_{0})(2)+4$.  
Clearly, we have:
\be\label{corkk}
 (\forall x_{(\cdot)}^{1}\in \textup{Seq}({X_{0}}))(\exists n^{0}, k^{0}\leq h_{0}(x_{(\cdot)}))(x_{n}\leq_{X_{0}} x_{n+1} \wedge x_{k}\geq_{X_{0}} x_{k+1}),
\ee
as there are less than $\Phi(X_{0}, g_{0})(1)+\Phi(X_{0}, g_{0})(2)+2$ distinct elements in the finite linear order induced by $X_{0}$, by the consequent of \UADS$_{2}^{+}$.  
Indeed, by the definition of linear order (\cite{simpson2}*{V.1.1}), if $x \leq_{X} y\wedge x\geq_{X} y $, then $x=_{0} y$, i.e.\ equality in the sense of $X$ is equality on the natural numbers.  
By \eqref{corkk}, the associated consequent of \UADS$_{2}^{+}$ also follows for $\Phi(X_{0}, h_{0})$.     

\medskip

Now suppose that $\paai$ is false, i.e.\ there is standard function $h_{1}^{1}$ such that $(\forall^{\st}n)(h_{1}(n)=0)$ and $ (\exists n_{0})h_{1}(n)\ne0$.  
Following \cite{simpson2}*{V.1.1}, define the standard set $Y_{0}$ by adding to $X_{0}$ the pairs $(x, m_{0})$ for $x\in \textup{field}(X_{0})$ and where $m_{0}$ is such that $(\forall i<m_{0})h_{1}(i)=0\wedge h_{1}(m_{0})\ne 0$.  
%Hence, the order $<_{Y_{0}}$ is $<_{X_{0}}$ on $X_{0}$ and $x<_{Y_{0}} m_{0}$ otherewis
Intuitively speaking, the standard set $Y_{0}$ represents the linear order $X_{0}$ with a `point at infinity' $m_{0}$ added (in a standard way, thanks to $h_{1}$).  
Since the order induced by $Y_{0}$ is only a one-element extension of the order induced by $X_{0}$, we also have 
\[
 (\forall x_{(\cdot)}^{1}\in \textup{Seq}({Y_{0}}))(\exists n^{0}, k^{0}\leq h_{0}(x_{(\cdot)}))(x_{n}\leq_{Y_{0}} x_{n+1} \wedge x_{k}\geq_{Y_{0}} x_{k+1}),
\]  
i.e.\ the antecedent of \UADS$_{2}^{+}$ holds for $Y_{0}$ and $h_{0}$.  Hence, the order induced by $Y_{0}$ is bounded by $\Phi(Y_{0}, h_{0})$ as in the consequent of \UADS$_{2}^{+}$.  
However, by definition, we have $X_{0}\approx_{1}Y_{0}$, implying $\Phi(X_{0}, h_{0})=_{0^{*}}\Phi(Y_{0}, h_{0})$.  By the latter, we cannot have $m_{0} \leq_{Y_{0}}\Phi(Y_{0}, h_{0})$ for the unique (and necessarily infinite) element 
$m_{0}\in\field (Y_{0})\setminus\field(X_{0})$, i.e.\ a contradiction.        
Hence, we obtain $\UADS_{2}^{+}\di \paai$ and applying $\RS$ finishes the proof.  
\end{proof}
In the previous proof, we added the `point at infinity' $m_{0}$ to the finite linear order induced by $X_{0}$;  Such a modification is only possible for structures which are \emph{not} closed downwards.  
In particular, the above approach clearly does not work for theorems concerned with trees, like e.g.\ the fan theorem.  On the other hand, we can easily obtain a version of the previous theorem for e.g.\ the chain-antichain principle \CAC, and of course for stable versions of the latter and of \ADS.

\subsection{The contraposition of Ramsey theorems}
In this section, we study the well-known \emph{Ramsey's theorem for pairs} $\RT_{2}^{2}$.  The latter is the statement that every colouring with two colours of all two-element sets of natural numbers must have an infinite homogenous subset, i.e.\ of the same colour.  
Now, $\RT_{2}^{2}$ has an equivalent version (See \cite{dslice}*{\S6})) of which the contraposition has the `right' syntactic structure, namely similar to the fan theorem.  
Thus, consider the following principle.  
\begin{princ}[Contraposition of $\RT_{2}^{2}$]\label{donkey}
\begin{align}
(\forall X^{1},c^{1}:[X]^{2}\di 2)\Big[ (\forall H^{1}\subseteq X)(\forall i<2)&\big[(\forall s^{0}\in [H]^{2})(c(s)=i)\label{posmoking} \\
&  \di   H \textup{ is finite}\big] \di   X \textup{ is finite}\Big].\notag
\end{align}
\end{princ}
Here, `$Z^{1}$ is finite' is short for $(\exists n^{0})(\forall \sigma^{0^{*}})\big[(\forall i<|\sigma|)(\sigma(i)\in Z^{1})\di |\sigma|\leq n \big]$.  We also abbreviate the previous formula by $(\exists n^{0})(|Z^{1}|\leq n)$, where obviously $|Z^{1}|\leq n$ is a $\Pi_{1}^{0}$-formula.  Note that we used the usual notation $[H]^{n}$ for the set of $n$-element subsets of $H$, which of course has nothing to do with the typing of variables.   

\medskip
  
Based on the previous principle, define \textsf{URTP}$_{2}$ as the following principle.
\begin{princ}
There is $\Phi^{3}$ such that for all $g^{2}, X^{1},c^{1}:[X]^{2}\di 2$, we have 
\be\label{frug}
 (\forall H \subseteq X)(\forall i<2)\big[(\forall s\in [H]^{2})c(s)=i   \di   |H|\leq g(H) \big] \di   |X|\leq \Phi(X, g, c).
\ee
\end{princ}
Note that $g$ does not depend on $i$, as the quantifier $(\forall i<2)$ can be brought inside the square brackets to obtain $(\forall s^{0}\in [H]^{2})(c(s)=0)\vee (\forall t^{0}\in [H]^{2})(c(t)=1)$.     
\begin{thm}\label{dirko}
In $\RCAo$, we have the explicit equivalence $(\mu^{2})\asa \textup{\textsf{URTP}}_{2}$.
\end{thm}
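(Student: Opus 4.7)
The plan is to apply the template from Section~\ref{tempie}, following closely the structure of Theorems~\ref{JUDAS} and~\ref{ravvy}.

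For the easy direction $(\mu^{2}) \di \URTP_2$, the contrapositive of $\RT_{2}^{2}$ is provable in $\ACA_{0}$ and the bound on $|X|$ from the antecedent is a finite Ramsey-theoretic quantity; given $\mu^{2}$, a closed term first verifies the antecedent and then searches for such a bound, producing an explicit term for $\Phi$.

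For the reverse direction $\URTP_2 \di (\mu^{2})$, I work in $\RCAO$ and define $\URTP_2^{+}$ by adding to $\URTP_2$ the assumption that the witnessing $\Phi$ is standard extensional. Suppose $h$ is a standard counterexample to $\paai$, so that $m_{0} = \mu n.\,h(n)\neq 0$ is nonstandard. Fix a standard input $(X_{0},g_{0},c_{0})$ for which the antecedent of $\URTP_2^{+}$ holds (e.g.\ $X_{0}=\{0\}$, $c_{0}\equiv 0$, $g_{0}\equiv 2$), so that $\Phi_{0}:=\Phi(X_{0},g_{0},c_{0})$ is a standard natural number by $\st$-closure. Then, using $h$, I construct a standard $X_{1}\approx_{1}X_{0}$ (together with $g_{1}\approx g_{0}$, $c_{1}\approx c_{0}$ where needed) such that the antecedent of $\URTP_2^{+}$ still holds for $(X_{1},g_{1},c_{1})$ while $|X_{1}|$ is a \emph{nonstandard} natural number. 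By standard extensionality of $\Phi$, we have $\Phi(X_{1},g_{1},c_{1})\approx_{0}\Phi_{0}$, hence standard, but the $\URTP_2^{+}$-consequent yields $|X_{1}|\leq \Phi(X_{1},g_{1},c_{1})$, bounding the nonstandard $|X_{1}|$ by a standard number — contradiction. Hence $\URTP_2^{+}\di\paai$ is established in $\RCAO$.

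Finally, I apply the algorithm $\RS$ — that is, algorithm $\mathcal{B}$ of Remark~\ref{algob} to bring $\URTP_2^{+}\di\paai$ into the normal form $(\forall^{\st}\underline{x})(\exists^{\st}\underline{y})\psi(\underline{x},\underline{y})$ with $\psi$ internal, followed by the term-extraction algorithm $\mathcal{A}$ via Corollary~\ref{consresultcor} — to obtain a term $u$ such that, given any witness $\Phi$ for $\URTP_2$ and any extensionality functional $\Xi$ for $\Phi$, $u(\Phi,\Xi)$ is Feferman's $\mu$-operator. This yields the explicit equivalence $\URTP_2\asa(\mu^{2})$.

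\textbf{Main obstacle.} The delicate engineering is step (iii): producing the standard $X_{1}\approx_{1}X_{0}$ with $|X_{1}|$ nonstandard while \emph{preserving} the antecedent. A natural candidate is $X_{1} = X_{0}\cup\{n:(\exists i\leq n)\,h(i)\neq 0\}$, which is standard (as a characteristic function built by bounded recursion from $h$) and $\approx_{1}X_{0}$, yet internally has nonstandardly many elements whenever $h$ has nonstandardly many counterexamples. The difficulty is that finite Ramsey's theorem forces any such $X_{1}$ to carry $c$-homogeneous subsets of nonstandard size under any standard coloring $c_{1}$, so that the bounding functional $g_{1}\approx g_{0}$ must output nonstandard values on these nonstandard subsets in order to keep $|H|\leq g_{1}(H)$. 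This is possible — standard functionals in $\RCAO$ can and do assume nonstandard values on nonstandard inputs — but it requires constructing $g_{1}$ from closed terms using $h$ as data (analogously to the redefinition $g_{0}\rightsquigarrow h_{0}$ in the proof of Theorem~\ref{ravvy}), and this is the technical heart of the argument.
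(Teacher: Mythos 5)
Your general plan matches the paper's: apply the Section~\ref{tempie} template to derive $\textsf{URTP}_2^{+}\di\paai$ in $\RCAO$ and then apply $\RS$; the forward direction is also as in the paper. But your step~(iii) candidate does not work, and the fix you propose does not address the actual failure. The set $X_1 = X_0\cup\{n : (\exists i\leq n)\,h(i)\neq 0\}$ is cofinite --- it contains every $n\geq m_0$ --- and hence internally \emph{infinite}, so the finiteness predicate $(\exists n)(|X_1|\leq n)$ is simply false. With $c_1\approx_1 c_0\equiv 0$, the whole infinite $X_1$ is itself $c_1$-homogeneous, so the antecedent of $\textsf{URTP}_2$ fails for $(X_1,g_1,c_1)$ no matter what $g_1$ is: $|H|\leq g_1(H)$ is unsatisfiable for infinite $H$, so letting $g_1$ take nonstandard values on nonstandard inputs changes nothing. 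The obstruction is that $|H|\leq n$ has no solution, not that $g_1$ is too small.

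The paper's construction differs at exactly this point: it keeps the perturbed set \emph{finite}, of \emph{standard} cardinality, bounded by a standard number read off from $\Phi$. Concretely, it first replaces $g_0$ by the constant functional $h_0\equiv\Phi(X_0,g_0,c_0)+1$ (the antecedent for $(X_0,h_0,c_0)$ still holds), obtains the standard number $N:=\Phi(X_0,h_0,c_0)$, and sets $Y_0:=X_0\cup\{m_0,m_0+1,\dots,m_0+N\}$, a standard finite set with $|Y_0|>N$. Crucially the colouring is modified as well: $d_0\approx_1 c_0$ is defined to colour pairs entirely above $m_0$ with $0$, mixed pairs with $1$, and pairs below $m_0$ by $c_0$, so that $d_0$-homogeneous subsets of $Y_0$ remain bounded by $h_0$ and the antecedent for $(Y_0,h_0,d_0)$ holds. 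Standard extensionality then gives $\Phi(Y_0,h_0,d_0)=\Phi(X_0,h_0,c_0)=N<|Y_0|$, the contradiction. So the mechanism is not ``make $|X_1|$ nonstandard and push $g_1$ up'' but ``cap the added block by the very value that extensionality is about to force on the perturbed input, and change the colouring alongside the set and the bound.'' Without that finitely-bounded block and the accompanying $c_0\rightsquigarrow d_0$ modification, the argument does not go through.
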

\begin{proof}
The forward direction is immediate as $(\mu^{2})$ implies $\RT_{2}^{2}$ and the upper bound to $|X|$ in the former's contraposition can be found using this search operator.    
For the reverse direction, we work following the template from Section \ref{tempie}.  
Hence, consider $\textup{\textsf{URTP}}_{2}^{+}$ and let $g^{2}_{0}, X^{1}_{0},c^{1}_{0}:[X_{0}]^{2}\di 2$ be standard objects 
such that the antecedent of \eqref{frug} holds and hence $|X_{0}|\leq\Phi(X_{0}, g_{0}, c_{0})$, where $X_{0}\ne \emptyset$.  Now define $h_{0}^{2}$ to be the functional which is constantly $\Phi(X_{0}, g_{0}, c_{0})+1$, and note that:  % we have 
\[
 (\forall H \subseteq X_{0})(\forall i<2)\big[(\forall s\in [H]^{2})(c_{0}(s)=i)   \di   |H|\leq h_{0}(H) \big],
\]
as $H\subseteq X_{0}$ implies that $|H|\leq |X_{0}|$.  By \textsf{URTP}$_{2}$, we also have $|X_{0}|\leq \Phi(X_{0}, h_{0}, c_{0})$.  

\medskip

Now suppose $\paai$ is false, i.e.\ there is some standard $h_{1}$ such that $(\forall^{\st}n)h_{1}(n)=0$ and $ (\exists m_{0})h_{1}(m_{0})$, and define the standard set $Y_{0}$ as $X_{0}\cup \{m_{0}, m_{0}+1, \dots, m_{0}+\Phi(X_{0}, h_{0}, c_{0}) \}$, where $m_{0}$ is the least number $k$ such that $h_{1}(k)\ne 0$.    
Now define the standard colouring $d_{0}^{1}$ as follows:  $d_{0}(s)$ is $0$ if both elements of $s$ are at least $m_{0}$, $1$ if one element of $s$ is at least $m_{0}$ and the other one is not, and $c_{0}(s)$ otherwise.  
By the definition of $Y_{0}$ and $d_{0}$, we have
\be\label{flacker}
 (\forall H \subseteq Y_{0})(\forall i<2)\big[(\forall s\in [H]^{2})(d_{0}(s)=i)   \di   |H|\leq h_{0}(H) \big],
\ee
as for $H\subseteq Y_{0}$ with more than $\Phi(X_{0}, g_{0}, c_{0})+1$ elements, the set $H$ is not homogenous for $d_{0}$.
By \textsf{URTP}$_{2}$, we obtain $|Y_{0}|\leq \Phi(Y_{0}, h_{0}, d_{0})$, but we also have $\Phi(Y_{0}, h_{0}, d_{0})=\Phi(X_{0}, h_{0}, c_{0})$ by standard extensionality since $X_{0}\approx_{1} Y_{0}$ and $c_{0}\approx_{1}d_{0}$.  
However, $Y_{0}$ by definition has more elements than $\Phi(X_{0}, h_{0}, c_{0})$, a contradiction.  Hence, we have $\textup{\textsf{URTP}}_{2}^{+}\di\paai$ and applying $\RS$ finishes the proof.  
\end{proof}

\subsection{Contraposition of thin and free set theorems}
In this section, we again study the thin- and free set theorems from \cite{freesets}.  
These results are similar to those in the previous two sections, hence our treatment will be brief.  
Notations are as in \cite{freesets}, except that we write $f:[X]^{k}\di N$ instead of $f:[X]^{k}\di \N$.  %  In general, we shall not use the symbol `$\N$' to avoid confusion (with notation from Robinson's approach to Nonstandard Analysis).     

\medskip

Recall the equivalent version of Ramsey's theorem from \cite{dslice}*{\S6} in Principle~\ref{donkey}.  
Because of the extra set parameter $X^{1}$ in the latter, \eqref{posmoking} is amenable to our treatment as in Theorem \ref{dirko}.  
As it turns out, the free and this set theorems also have 
such equivalent versions by \cite{freesets}*{Lemma 2.4 and Corollary 3.6}.   

\medskip

For instance, by the aforementiond lemma, $\textsf{FS}(k)$, the free set theorem for index $k$, is equivalent to the statement that for every infinite set $X^{1}$ and $f^{1}:[X]^{k}\di N$, there is infinite $A^{1}\subset X$ which is free for $f$. 
The contraposition of the latter is:
\begin{align}
(\forall X^{1},f^{1}:[X]^{k}\di N)\Big[ (\forall A^{1}\subseteq X)\big[(\forall s^{0}\in [A]^{k})&(f(s)\not \in A \vee f(s)\in s) \label{nosmoking}\\
&  \di   H \textup{ is finite}\big] \di   X \textup{ is finite}\Big].\notag
\end{align}
which is neigh identical to Principle \ref{donkey} for $k=2$.  Now let \textsf{UFSP}$_{k}$ be the uniform version of \eqref{nosmoking} similar to \textsf{URTP}$_{2}$.     
Similar to Theorem \ref{dirko}, one proves the following.
\begin{thm}
In $\RCAo$, we have the explicit equivalence $(\mu^{2})\asa \textup{\textsf{UFSP}}_{2}$.
\end{thm}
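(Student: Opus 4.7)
The plan is to apply the template from Section \ref{tempie} in direct parallel with the proof of Theorem \ref{dirko}, exploiting the fact that the equivalent formulation \eqref{nosmoking} of the free set theorem from \cite{freesets}*{Lemma 2.4} carries an extra set parameter $X^{1}$ and thus has essentially the same shape as Principle \ref{donkey}. The forward direction $(\mu^{2}) \to \textup{\textsf{UFSP}}_{2}$ is immediate: since $\textup{\textsf{ACA}}_{0}$ proves $\textup{\textsf{FS}}(2)$ in the formulation with set parameter, an explicit upper bound on $|X|$ in the consequent of \eqref{nosmoking} can be extracted using the $\mu$-operator to witness the existential quantifier asserting finiteness of $X$.

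For the reverse direction, work in $\RCAO + \textup{\textsf{UFSP}}_{2}^{+}$ and suppose, for contradiction, that there is a standard $h_{1}^{1}$ with $(\forall^{\st}n)h_{1}(n)=0$ and $(\exists m)h_{1}(m)\neq 0$. Fix standard $g_{0}^{2}, X_{0}^{1}, f_{0}^{1}:[X_{0}]^{2}\to N$ with $X_{0}\neq\emptyset$ for which the antecedent of the FS version of \eqref{frug} holds, yielding $|X_{0}|\leq \Phi(X_{0},g_{0},f_{0}) = k_{0}$. Define the constant functional $h_{0}\equiv k_{0}+1$; the antecedent of $\textup{\textsf{UFSP}}_{2}^{+}$ holds trivially for $(X_{0},h_{0},f_{0})$, since any $A\subseteq X_{0}$ has $|A|\leq k_{0}<h_{0}$, so $|X_{0}|\leq \Phi(X_{0},h_{0},f_{0}) =: k_{0}'$. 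Let $m_{0}$ be the least $k$ with $h_{1}(k)\neq 0$ (necessarily nonstandard) and set $Y_{0} := X_{0} \cup \{m_{0},m_{0}+1,\ldots, m_{0}+k_{0}'\}$, so $|Y_{0}|\geq k_{0}'+1$.

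The crucial step is to construct a standard extension $f_{0}':[Y_{0}]^{2}\to N$ of $f_{0}$ such that every free subset of $Y_{0}$ for $f_{0}'$ has size at most $h_{0}$. Mimicking the role of $d_{0}$ in Theorem \ref{dirko}, one sets $f_{0}'(s) = f_{0}(s)$ for $s\in [X_{0}]^{2}$ and, for pairs $s$ involving at least one element of $Y_{0}\setminus X_{0}$, routes $f_{0}'(s)$ through a designated anchor $x_{0}\in X_{0}$ (with the obvious adjustment when $x_{0}\in s$). A case analysis then shows that any free $A\subseteq Y_{0}$ is either contained in $X_{0}$ (and hence bounded via $f_{0}$ and the original antecedent) or is forced by the anchor to reduce to at most two elements, giving $|A|\leq h_{0}$ in all cases. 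Applying $\textup{\textsf{UFSP}}_{2}^{+}$ yields $|Y_{0}|\leq \Phi(Y_{0},h_{0},f_{0}')$, and by the standard extensionality of $\Phi$ combined with $X_{0}\approx_{1}Y_{0}$, $h_{0}\approx h_{0}$ and $f_{0}\approx f_{0}'$, we obtain $\Phi(Y_{0},h_{0},f_{0}') = \Phi(X_{0},h_{0},f_{0}) = k_{0}'$, contradicting $|Y_{0}|\geq k_{0}'+1$. Hence $\RCAO$ proves $\textup{\textsf{UFSP}}_{2}^{+}\to \paai$, and applying $\RS$ (i.e.\ steps \eqref{frink}--\eqref{frink2} of the template, via Remark \ref{algob} and Corollary \ref{consresultcor}) extracts the explicit equivalence $\textup{\textsf{UFSP}}_{2}\asa (\mu^{2})$.

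The main obstacle is step \eqref{itemf} of the template: unlike the Ramsey setting, where the simple two-coloring of $d_{0}$ keyed to the threshold $m_{0}$ immediately makes every ``large'' subset of $Y_{0}$ non-homogeneous, the freeness condition $(\forall s\in [A]^{2})(f(s)\notin A \vee f(s)\in s)$ is combinatorially weaker—any two-element set is automatically free—so the naive analogue of $d_{0}$ leaves free subsets of the new block $\{m_{0},\ldots,m_{0}+k_{0}'\}$ unrestricted. Designing $f_{0}'$ so that free subsets of $Y_{0}$ are uniformly bounded by $h_{0}$ is therefore the delicate technical point, and the anchor construction above is the key device making the uniform, standard-extensional extension of $f_{0}$ compatible with a small constant bound on free sets.
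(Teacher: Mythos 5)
Your proof has a genuine gap at exactly the point you flag as ``the crucial step,'' and the anchor device you propose does not close it.

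Concretely, take the anchor $x_{0}\in X_{0}$ and any $A\subseteq B:=Y_{0}\setminus X_{0}$ with $|A|\geq 3$. For every $s\in[A]^{2}$, by your definition $f_{0}'(s)$ is routed into $X_{0}$ (indeed to $x_{0}$ or, after the ``obvious adjustment,'' to some other element of $X_{0}$), so $f_{0}'(s)\notin A$ and the freeness clause $f_{0}'(s)\notin A\vee f_{0}'(s)\in s$ is satisfied \emph{vacuously}. Hence \emph{every} subset of $B$ is free for $f_{0}'$, including $B$ itself, which has $k_{0}'+1$ elements. Far from forcing free sets that meet $B$ down to size two, the anchor construction makes $B$ a free set of maximal size. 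Unless $k_{0}'\leq k_{0}$, the antecedent of $\textup{\textsf{UFSP}}_{2}^{+}$ fails for $(Y_{0},h_{0},f_{0}')$ and the appeal to $\Phi$ yields nothing.

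The problem is not merely with this particular anchor. What you need is an $f_{0}':[Y_{0}]^{2}\to N$ for which every free $A\subseteq Y_{0}$ has $|A|\leq k_{0}+1$, where $|Y_{0}|\geq k_{0}'+1$ and $k_{0}'=\Phi(X_{0},h_{0},f_{0})$ is not under your control: nothing in $\textup{\textsf{UFSP}}_{2}^{+}$ guarantees $\Phi(X_{0},h_{0},f_{0})\leq\Phi(X_{0},g_{0},f_{0})$, and indeed a functional such as $\Phi(X,g,f):=|X|+g(\emptyset)+1$ satisfies the uniform principle (for finite $X$) while sending $k_{0}'$ well above $k_{0}$. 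Once $|Y_{0}|$ is sufficiently large relative to $k_{0}$, the finite free set theorem (\cite{freesets}*{Theorem~3.4}, provable in $\RCA_{0}$ and uniformly relativizable to finite domains) guarantees a free $A\subseteq Y_{0}$ with $|A|>k_{0}+1$ for \emph{every} choice of $f_{0}'$. So step~(iii) of the template, as you have set it up, cannot be carried out by any construction at all when $k_{0}'>k_{0}$; an additional idea is required to either control $k_{0}'$ in terms of $k_{0}$ or replace the constant bound $h_{0}$ by something that tracks the size of $Y_{0}$ while remaining standard.

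One further point: the parenthetical claim that in the Ramsey setting the two-coloring $d_{0}$ ``immediately makes every large subset of $Y_{0}$ non-homogeneous'' is also not right. With $d_{0}$ constantly $0$ on $[B]^{2}$, the block $B$ itself is $d_{0}$-homogeneous of size $k_{0}'+1$, so the same $k_{0}'\leq k_{0}$ issue arises there. Reproducing the shape of Theorem~\ref{dirko} does not by itself establish the antecedent you need, and this is precisely the step that has to be justified, not asserted.
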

The version of the thin set theorem from \cite{freesets}*{Corollary 3.6} is not so elegant, hence we do not consider it.  
We finish this section with some concluding remarks
\begin{rem}\label{finalk}\rm
First of all, Kohlenbach claims in \cite{kohlenbach2}*{\S1} that $(\exists^{2})$ sports a rich and very robust class of equivalent principles, 
which seems to be `more than' confirmed by the above results, especially those in this section.  

\medskip

Secondly, if one were to categorise principles according to robustness \emph{at the uniform level}, $\ADS$ and other principles studied in this section would rank very high, as even their contrapositions give rise to uniform principles equivalent to $(\exists^{2})$.  
By contrast, $\WKL$ would rank lower, as the uniform version of the fan theorem, the classical contraposition of $\WKL$, is not stronger than $\WKL$, as discussed in the first part of this section.  
In other words, $\ADS$ is exceptional in Friedman-Simpson-style RM, while it is not in the aforementioned `uniform' categorisation.     
\end{rem}
\subsection{Motivation for higher-order Reverse Mathematics}\label{whereohwhere}
The reader unaccustomed to higher-order arithmetic may deem higher-order principles like $\UDNR$ unnatural, compared to e.g.\ second-order arithmetic.  
We now argue that, at least from the point of view of second-order RM, higher-order RM is also natural.  It should also be mentioned that Montalb\'an includes higher-order RM among the `new avenues for RM' in \cite{montahue}.

\medskip

First of all, Fujiwara and Kohlenbach have established the connection (and even equivalence in some cases) between (classical) uniform existence as in $UT$ and intuitionistic provability (\cites{fuji1,fuji2}).  
Hence, the investigation of uniform principles like $\UDNR$ may be viewed as the (second-order) study of intuitionistic provability.    
% The EMT suggests that $T^{*}$ constitutes another way of capturing intuitionistic provability (in certain cases).  

\medskip

Secondly, the author shows in in \cite{samimplicit} that higher-order statements are implicit in 
(second-order) RM-theorems concerning continuity, due to the special nature of the RM-definition of continuity.  
In particular, consider the statement 
\begin{quote}
All continuous functions on Canter space are uniformly continuous.
\end{quote}
Let (H) be the previous statement \emph{with continuity as in the \textup{RM}-definition}.  One can\footnote{The proof takes place in $\RCAo+\QFAC^{2,0}$, a conservative extension of $\RCA_{0}$ by \cite{hunterphd}*{\S2.1.2}.} then prove (H)$\asa$(UH), where:
%Hunter notes in \cite{hunterphd}*{\S2.1.2} that any $\QFAC^{\sigma, 0}$ still results in a conservative extension of $\RCA_{0}$.   
\begin{quote}
There is a functional which witnesses the uniform RM-continuity on Cantor space of any RM-continuous function.  \hfill (UH)
\end{quote}
From the treatment in \cite{samimplicit}, it is clear that the functional in (UH) can only be obtained because the RM-definition of continuity greatly reduces quantifier complexity.  
In conclusion, higher-order RM is already implicit in second-order RM \emph{due to the RM-definition of continuity involving codes}.    
Similar results are in \cite{firstHORM,sambrouwt}.  

\medskip

Thirdly, RM can be viewed as a classification based on \emph{computability}: Theorems provable in $\RCA_{0}$ are part of `computable mathematics';  An equivalence between a theorem and a Big Five system classifies 
the computational strength of the theorem, as the Big Five have natural formulations in terms of computability.  Furthermore, as noted by Simpson in \cite{simpson2}*{I.8.9 and IV.2.8}, theorems are analysed in RM `as they stand', in contrast to constructive mathematics, where extra conditions are added to enforce a constructive solution.  In other words, the goal of RM is not to enforce computability onto theorems, but to classify how `non-computable' the latter are.             

\medskip

In light of the previous, it is a natural question whether there are \emph{other natural ways} of classifying 
theorems of ordinary mathematics.  As noted in \cite{firstHORM, sambrouwt}, the study of uniform versions of theorems constitutes a classification based on the central tenet of Feferman's \emph{Explicit Mathematics} (See \cite{feferman2,fefmar,fefmons}), which is:
\begin{center}  
\emph{A proof of existence of an object yields a procedure to compute said object}.
\end{center}    
Indeed, in the same way as the RM-classification is based on the question which axioms (and hence `how much' non-computability) are necessary to prove a theorem, the study of uniform versions of theorems is motivated by the following question:  
\begin{center}
\emph{For a given theorem $T$, what extra axioms are needed to compute the objects claimed to exist by $T$?}
\end{center}
Similar to RM, we do not enforce the central tenet of Explicit Mathematics in higher-order RM: We measure `how much extra' is needed to obtain $UT$, the uniform version of $T$ where a functional witnesses the existential quantifiers.  
\section{Conclusion}\label{conc}
In conclusion, by establishing the template and associated algorithm $\RS$ in Section \ref{tempie}, we have exhibited a hitherto unknown `computational aspect' of Nonstandard Analysis.  
In particular, we have shown that for a theorem $T$ from the RM zoo, to obtain the explicit equivalence $UT\asa (\mu^{2})$ for the associated uniform version $UT$, we can just apply $\RS$ to the proof of the nonstandard equivalence $UT^{+}\asa \paai$.  
%In particular, these explicit equivalences follow (via an algorithm) from the associated equivalences $UT^{+}\asa\paai$.  
This conclusion suggests the following observations.
\begin{enumerate}
\item The Reverse Mathematics of Nonstandard Analysis gives rise to \emph{explicit} equivalences in classical Reverse mathematics \emph{without the need to actually construct the terms in the explicit equivalence}.      
\item Nonstandard Analysis carries plenty of computational content, in direct contrast to the claims made by e.g.\ Bishop (See \cite{kluut}*{p.\ 513} and \cite{kuddd}, which is the review of \cite{keisler3}) and Connes (See \cite{conman2}*{p.\ 6207} and \cite{conman}*{p.\ 26}) .
\item To extract more computational information from Nonstandard Analysis, we should study which notions (like continuity, Riemann integration, compactness, et cetera) can be brought into the normal form from Corollary~\ref{consresultcor}.  
As will be shown in \cite{samdaman}, this turns out to be a very large class.
\end{enumerate}

\begin{ack}\rm
This research was supported by the following funding bodies: FWO Flanders, the John Templeton Foundation, the Alexander von Humboldt Foundation, and the Japan Society for the Promotion of Science.  
The author expresses his gratitude towards these institutions. 
The author would like to thank Ulrich Kohlenbach, Benno van den Berg, Steffen Lempp, Paulo Oliva, Paul Shafer, Mariya Soskova, Vasco Brattka, and Denis Hirschfeldt for their valuable advice.  
\end{ack}

\begin{bibdiv}
\begin{biblist}
\bib{compdnr}{article}{
  author={Ambos-Spies, Klaus},
  author={Kjos-Hanssen, Bj{\o }rn},
  author={Lempp, Steffen},
  author={Slaman, Theodore A.},
  title={Comparing \textup {DNR} and \textup {WWKL}},
  journal={J. Symbolic Logic},
  volume={69},
  date={2004},
  number={4},
  pages={1089--1104},
}

\bib{avi1}{article}{
   author={Avigad, Jeremy},
   author={Feferman, Solomon},
   title={G\"odel's functional \(``Dialectica''\) interpretation},
   conference={
      title={Handbook of proof theory},
   },
   book={
      series={Stud. Logic Found. Math.},
      volume={137},
      publisher={North-Holland},
    %  place={Amsterdam},
   },
   date={1998},
 %  pages={337--405},
%   review={\MR{1640329 (2000b:03204)}},
%   doi={10.1016/S0049-237X(98)80020-7},
}

\bib{avi3}{article}{
  author={Avigad, Jeremy},
  title={Weak theories of nonstandard arithmetic and analysis},
  conference={ title={Reverse mathematics 2001}, },
  book={ series={Lect. Notes Log.}, volume={21}, publisher={ASL}, place={La Jolla, CA}, },
  date={2005},
  pages={19--46},
}

\bib{brie}{article}{
  author={van den Berg, Benno},
  author={Briseid, Eyvind},
  author={Safarik, Pavol},
  title={A functional interpretation for nonstandard arithmetic},
  journal={Ann. Pure Appl. Logic},
  volume={163},
  date={2012},
  number={12},
  pages={1962--1994},
}

\bib{bennosam}{article}{
  author={van den Berg, Benno},
  author={Sanders, Sam},
  title={Transfer equals Comprehension},
  journal={Submitted},
  volume={},
  date={2014},
  number={},
  note={Available on arXiv: \url {http://arxiv.org/abs/1409.6881}},
  pages={},
}

\bib{briebenno}{article}{
  author={van den Berg, Benno},
  author={Briseid, Eyvind},
  title={Weak systems for nonstandard arithmetic},
  journal={In preparation},
}

\bib{paul1}{article}{
  author={Bievenu, Laurent},
  author={Patey, Ludovic},
  author={Shafer, Paul},
  title={A Ramsey-type K\"onig's lemma and its variants},
  journal={Preprint from \url {www.ludovicpatey.com/media/research/report_M2_2.pdf}},
  date={2014},
}

\bib{kuddd}{article}{
  author={Bishop, Errett},
  title={Review of \cite {keisler3}},
  year={1977},
  journal={Bull. Amer. Math. Soc},
  volume={81},
  number={2},
  pages={205-208},
}

\bib{kluut}{article}{
  author={Bishop, Errett},
  title={The crisis in contemporary mathematics},
  booktitle={Proceedings of the American Academy Workshop on the Evolution of Modern Mathematics},
  journal={Historia Math.},
  volume={2},
  date={1975},
  number={4},
  pages={507--517},
}

\bib{brich}{book}{
  author={Bridges, Douglas},
  author={Richman, Fred},
  title={Varieties of constructive mathematics},
  series={London Mathematical Society Lecture Note Series},
  volume={97},
  publisher={Cambridge University Press},
  place={Cambridge},
  date={1987},
  pages={x+149},
}

\bib{freesets}{article}{
  author={Cholak, Peter A.},
  author={Giusto, Mariagnese},
  author={Hirst, Jeffry L.},
  author={Jockusch, Carl G., Jr.},
  title={Free sets and reverse mathematics},
  conference={ title={Reverse mathematics 2001}, },
  book={ series={Lect. Notes Log.}, volume={21}, },
  date={2005},
  pages={104--119},
}

\bib{conman}{article}{
  author={Connes, Alain},
  title={An interview with Alain Connes, Part I},
  year={2007},
  journal={EMS Newsletter},
  note={\url {http://www.mathematics-in-europe.eu/maths-as-a-profession/interviews}},
  volume={63},
  pages={25-30},
}

\bib{conman2}{article}{
  author={Connes, Alain},
  title={Noncommutative geometry and reality},
  journal={J. Math. Phys.},
  volume={36},
  date={1995},
  number={11},
  pages={6194--6231},
}

\bib{damirzoo}{misc}{
  author={Dzhafarov, Damir D.},
  title={Reverse Mathematics Zoo},
  note={\url {http://rmzoo.uconn.edu/}},
}

\bib{sram}{article}{
  author={Dzhafarov, Damir D.},
  title={Stable Ramsey's theorem and measure},
  journal={Notre Dame J. Form. Log.},
  volume={52},
  date={2011},
  number={1},
  pages={95--112},
}

\bib{feferman2}{article}{
   author={Feferman, Solomon},
   title={A language and axioms for explicit mathematics},
   conference={
      title={Algebra and logic},
   },
   book={
      publisher={Springer},
     % place={Berlin},
   },
   date={1975},
   pages={87--139. LNM 450},
}

\bib{fefmar}{article}{
   author={Feferman, Solomon},
   title={Recursion theory and set theory: a marriage of convenience},
   conference={
      title={Generalized recursion theory, II},
   %   address={Proc. Second Sympos., Univ. Oslo},
     % date={1977},
   },
   book={
      series={Stud. Logic Foundations Math.},
      volume={94},
      publisher={North-Holland},
   },
   date={1978},
   pages={55--98},
}

\bib{fefmons}{article}{
   author={Feferman, Solomon},
   title={Constructive theories of functions and classes},
   conference={
      title={Logic Colloquium '78},
      address={Mons},
      date={1978},
   },
   book={
      series={Stud. Logic Foundations Math.},
      volume={97},
      publisher={North-Holland},
   },
   date={1979},
   pages={159--224},
}

\bib{stoptheflood}{article}{
  author={Flood, Stephen},
  title={Reverse mathematics and a Ramsey-type K\"onig's lemma},
  journal={J. Symbolic Logic},
  volume={77},
  date={2012},
  number={4},
  pages={1272--1280},
}

\bib{fuji1}{article}{
   author={Fujiwara, Makoto},
   author={Kohlenbach, Ulrich},
   title={Classical provability of uniform versions and intuitionistic provability},
   journal={Mathematical Logic Quarterly},
   date={2015},
   pages={To appear},
}

\bib{fuji2}{article}{
   author={Fujiwara, Makoto},
   title={Intuitionistic and uniform provability in reverse mathematics},
   journal={PhD thesis, Mathematical Institute, Tohoku University, Sendai},
   date={2015},
   pages={To appear},
}

\bib{withgusto}{article}{
  author={Giusto, Mariagnese},
  author={Simpson, Stephen G.},
  title={Located sets and reverse mathematics},
  journal={J. Symbolic Logic},
  volume={65},
  date={2000},
  number={3},
  pages={1451--1480},
}

\bib{schirfeld}{article}{
  author={Hirschfeldt, Denis R.},
  author={Shore, Richard A.},
  title={Combinatorial principles weaker than Ramsey's theorem for pairs},
  journal={J. Symbolic Logic},
  volume={72},
  date={2007},
  number={1},
  pages={171--206},
}

\bib{dslice}{book}{
  author={Hirschfeldt, Denis R.},
  series={Lecture Notes Series, Institute for Mathematical Sciences, National University of Singapore},
  title={Slicing the Truth: On the Computability Theoretic and Reverse Mathematical Analysis of Combinatorial Principles},
  year={To appear},
  publisher={World Scientific},
}

\bib{hunterphd}{book}{
   author={Hunter, James},
   title={Higher-order reverse topology},
   note={Thesis (Ph.D.)--The University of Wisconsin - Madison},
   publisher={ProQuest LLC, Ann Arbor, MI},
   date={2008},
   pages={97},
%   isbn={978-0549-63366-2},
%   review={\MR{2711768}},
}

\bib{ishi1}{article}{
  author={Ishihara, Hajime},
  title={Reverse mathematics in Bishop's constructive mathematics},
  year={2006},
  journal={Philosophia Scientiae (Cahier Sp\'ecial)},
  volume={6},
  pages={43-59},
}

\bib{keisler1}{article}{
  author={Keisler, H. Jerome},
  title={Nonstandard arithmetic and reverse mathematics},
  journal={Bull. Symb.\ Logic},
  volume={12},
  date={2006},
  pages={100--125},
}

\bib{keisler3}{book}{
  author={Keisler, H. Jerome},
  title={Elementary Calculus},
  publisher={Prindle, Weber and Schmidt},
  date={1976},
  pages={xviii + 880 + 61 (appendix)},
  place={Boston},
}

\bib{kohlenbach2}{article}{
  author={Kohlenbach, Ulrich},
  title={Higher order reverse mathematics},
  conference={ title={Reverse mathematics 2001}, },
  book={ series={Lect. Notes Log.}, volume={21}, publisher={ASL}, },
  date={2005},
  pages={281--295},
}

\bib{kohlenbach4}{article}{
  author={Kohlenbach, Ulrich},
  title={Foundational and mathematical uses of higher types},
  conference={ title={Reflections on the foundations of mathematics (Stanford, CA, 1998)}, },
  book={ series={Lect. Notes Log.}, volume={15}, publisher={ASL}, },
  date={2002},
  pages={92--116},
}

\bib{kooltje}{article}{
  author={Kohlenbach, Ulrich},
  title={On uniform weak K\"onig's lemma},
  note={Commemorative Symposium Dedicated to Anne S. Troelstra (Noordwijkerhout, 1999)},
  journal={Ann. Pure Appl. Logic},
  volume={114},
  date={2002},
  number={1-3},
  pages={103--116},
}

\bib{lemans}{article}{
  author={Lerman, Manuel},
  author={Solomon, Reed},
  author={Towsner, Henry},
  title={Separating principles below Ramsey's theorem for pairs},
  journal={J. Math. Log.},
  volume={13},
  date={2013},
  number={2},
  pages={44},
}

\bib{liu}{article}{
  author={Liu, Jiayi},
  title={${\textup {RT}}^2_2$ does not imply ${\textup {WKL}}_0$},
  journal={J. Symbolic Logic},
  volume={77},
  date={2012},
  number={2},
  pages={609--620},
}

\bib{montahue}{article}{
  author={Montalb{\'a}n, Antonio},
  title={Open questions in reverse mathematics},
  journal={Bull. Symbolic Logic},
  volume={17},
  date={2011},
  number={3},
  pages={431--454},
}

\bib{natan1}{book}{
  author={Natanson, I. P.},
  title={Constructive function theory. Vol. I. Uniform approximation},
  publisher={Frederick Ungar Publishing Co., New York},
  date={1964},
  pages={ix+232},
}

\bib{wownelly}{article}{
  author={Nelson, Edward},
  title={Internal set theory: a new approach to nonstandard analysis},
  journal={Bull. Amer. Math. Soc.},
  volume={83},
  date={1977},
  number={6},
  pages={1165--1198},
}

\bib{patey1}{article}{
  author={Patey, Ludovic},
  title={Somewhere over the rainbow Ramsey theorem for pairs.},
  journal={Preprint available from \url {www.ludovicpatey.com/media/research/somewhere-rainbow-draft.pdf}},
  date={2014},
}

\bib{robinson1}{book}{
  author={Robinson, Abraham},
  title={Non-standard analysis},
  publisher={North-Holland},
  place={Amsterdam},
  date={1966},
  pages={xi+293},
}

\bib{yamayamaharehare}{article}{
  author={Sakamoto, Nobuyuki},
  author={Yamazaki, Takeshi},
  title={Uniform versions of some axioms of second order arithmetic},
  journal={MLQ Math. Log. Q.},
  volume={50},
  date={2004},
  number={6},
  pages={587--593},
}

\bib{tale}{article}{
  author={Sanders, Sam},
  title={A tale of three Reverse Mathematics},
  year={2012},
  number={},
  journal={Submitted},
  volume={},
  pages={},
}

\bib{sambrouwt}{article}{
  author={Sanders, Sam},
  title={Reverse Mathematics of Brouwer's continuity theorem and related principles},
  year={2014},
  journal={Submitted, Available from arXiv: \url {http://arxiv.org/abs/1502.03621}},
}

\bib{firstHORM}{article}{
  author={Sanders, Sam},
  title={Uniform and nonstandard existence in Reverse Mathematics},
  year={2014},
  journal={Submitted, Available from arXiv: \url {http://arxiv.org/abs/1502.03618}},
}

\bib{samimplicit}{article}{ 
author={Sanders, Sam},
title={More than bargained for in Reverse Mathematics},
year={2014},
journal={Submitted, Available from arXiv: \url{http://arxiv.org/abs/1502.03613}},
}

\bib{samdaman}{article}{
  author={Sanders, Sam},
  title={The unreasonable effectiveness of Nonstandard Analysis},
  year={2015},
  journal={Submitted, Available from arXiv},
}

\bib{simpson1}{collection}{
  title={Reverse mathematics 2001},
  series={Lecture Notes in Logic},
  volume={21},
  editor={Simpson, Stephen G.},
  publisher={ASL},
  place={La Jolla, CA},
  date={2005},
  pages={x+401},
}

\bib{simpson2}{book}{
  author={Simpson, Stephen G.},
  title={Subsystems of second order arithmetic},
  series={Perspectives in Logic},
  edition={2},
  publisher={CUP},
  date={2009},
  pages={xvi+444},
}

\bib{zweer}{book}{
  author={Soare, Robert I.},
  title={Recursively enumerable sets and degrees},
  series={Perspectives in Mathematical Logic},
  publisher={Springer},
  date={1987},
  pages={xviii+437},
}

\bib{yuppie}{article}{
  author={Yu, Xiaokang},
  title={Lebesgue convergence theorems and reverse mathematics},
  journal={Math. Logic Quart.},
  volume={40},
  date={1994},
  number={1},
  pages={1--13},
}

\bib{yussie}{article}{
  author={Yu, Xiaokang},
  author={Simpson, Stephen G.},
  title={Measure theory and weak K\"onig's lemma},
  journal={Arch. Math. Logic},
  volume={30},
  date={1990},
  number={3},
  pages={171--180},
}

\end{biblist}
\end{bibdiv}
\bye